\newcommand{\setword}[2]{%
  \phantomsection
  #1\def\@currentlabel{\unexpanded{#1}}\label{#2}%
}
\newtheorem{thm}{Theorem}[section]
\newtheorem{cor}[thm]{Corollary}
\newtheorem{lem}[thm]{Lemma}
\newtheorem{prop}[thm]{Proposition}
\numberwithin{equation}{section}
\theoremstyle{definition}
\newtheorem{definition}[thm]{Definition}
\newtheorem{rem}[thm]{Remark}
\begin{document}

\allowdisplaybreaks 
\def\dist{{\operatorname{dist}}}
\renewcommand{\d}{\:\! \mathrm{d}}


 \title[Infinitely many solutions for nonlinear superposition operators]{Infinitely many solutions for nonlinear superposition operators of mixed fractional order involving critical exponent}

 \author[Souvik Bhowmick, Sekhar Ghosh, and Vishvesh Kumar]{Souvik Bhowmick, Sekhar Ghosh and Vishvesh Kumar}

\address[Souvik Bhowmick]{Department of Mathematics, National Institute of Technology Calicut, Kozhikode, Kerala, India - 673601}
\email{souvikbhowmick2912@gmail.com / souvik\_p230197ma@nitc.ac.in}

\address[ Sekhar Ghosh]{Department of Mathematics, National Institute of Technology Calicut, Kozhikode, Kerala, India - 673601}
\email{sekharghosh1234@gmail.com / sekharghosh@nitc.ac.in}
\address[Vishvesh Kumar]{Department of Mathematics: Analysis, Logic and Discrete Mathematics, Ghent University, Ghent, Belgium \newline
and \newline
Department of Mathematical Sciences, Indian Institute of Technology (BHU), Varanasi, Uttar Pradesh, 221005, India}
\email{vishveshmishra@gmail.com / vishvesh.mat@iitbhu.ac.in}
\date{}

\begin{abstract}
This paper addresses a class of elliptic problems involving the superposition of nonlinear fractional operators with the critical Sobolev exponent in the sublinear regimes. We establish the existence of infinitely many nontrivial weak solutions using a variational framework combining a truncation argument with the notion of genus. A central part of our analysis is the verification of the Palais--Smale (PS) condition for the associated energy functional for every $q \in (1, p_{s_\sharp}^*)$, despite the challenges posed by the lack of compactness due to the critical exponent. The results obtained in the paper are new even in the classical case $p = 2$, highlighting the broader applicability of the methods developed here.

\end{abstract}

\keywords{Nonlocal Superposition Operators, Critical Exponent, Genus, $\mathrm{PS}$ condition.\\
\textit{2020 Mathematics Subject Classification: } 35M12, 35J60, 35R11, 35B33, 35A15}

\maketitle

\tableofcontents
\section{Introduction and main results}

The existence of infinitely many nonnegative solutions to elliptic problems involving local, nonlocal, and combined local-nonlocal operators has been extensively studied under Dirichlet boundary conditions. The symmetric mountain pass theorem, a variant of Lusternik-Schnirelmann theory, was initially developed by Clark \cite{C1972} (see also Amann \cite{A1972}) and plays a pivotal role in obtaining infinitely many solutions to such PDEs. The study by Ambrosetti and Rabinowitz \cite{AR: 1973} reignited interest in exploring the existence of infinitely many solutions for nonlinearities satisfying the well-known Ambrosetti-Rabinowitz (AR) condition. In \cite{AR: 1973}, Ambrosetti and Rabinowitz considered the following problem:
\begin{equation} \label{AR prob}
\begin{cases}
&-\Delta u = f(x,u) \quad \text{in} \,\, \Omega,\\
&u=0 \quad \text{on} \,\, \partial \Omega,\\
&u>0 \quad \text{in} \,\,  \Omega,
\end{cases}
\end{equation}
where $f$ is superlinear near the origin and sublinear at infinity. The results are obtained by employing a symmetric mountain pass theorem in conjunction with the notion of genus (see Rabinowitz \cite{R1986}). Kajikiya \cite{K2005, K2006} extended the study to the nonlinear setup involving the $p$-Laplacian for $1<p<\infty$. For a fruitful application of the genus theory combined with the symmetric mountain pass theorem and the Lusternick-Schnirelmann theory for proving infinitely many solutions, we refer to Garc{\'i}a Azorero and Peral Alonso \cite{AP1991}, Ambrosetti, Brezis, and Cerami \cite{ABC1994}, and Bartsch and Willem \cite{BW1994}.

The nonlocal extension to the problem \eqref{AR prob} is studied by Binlin {\it et al.} \cite{ZBS2015} by considering the following problem:
\begin{equation} \label{Zhang prob}
\begin{cases}
&-\mathcal{L}_K u = f(x,u) \quad \text{in} \,\, \Omega,\\
&u=0 \quad \text{on} \,\, \mathbb{R}^N\setminus \Omega,
\end{cases}
\end{equation}
where $\mathcal{L}_K$ represents a class of nonlocal operators defined by $$\mathcal{L}_Ku(x)=\int_{\mathbb{R}^N}(u(x+y)+u(x-y)-2u(x))K(y)dy$$ and $K$ is a generalised fractional kernel.

We now turn our attention to elliptic problems having a critical Sobolev exponent, known as the Brezis-Nirenberg type problems. Following the celebrated work of Brezis and Nirenberg \cite{BN83}, there has been significant attention to elliptic problems involving critical exponents. In \cite{BN83}, the authors investigated the following problem:
\begin{equation} \label{EucBNL}
\begin{cases}
&-\Delta u = |u|^{2^*-2} u+\lambda u \quad \text{in} \,\, \Omega,\\&
u=0 \quad \text{on} \,\, \partial \Omega,
\end{cases}
\end{equation}
where $\lambda \in \mathbb{R}$, $\Omega\subset\mathbb{R}^N$ is a smooth bounded domain  with $N\geq 3$ and $2^* := \frac{2N}{N-2}$ is the critical Sobolev exponent. Brezis and Nirenberg proved that a small perturbation to the Yamabe problem guarantees the existence of a positive solution. Specifically, they proved that problem \eqref{EucBNL} has a positive solution for $\lambda \in (0, \lambda_1)$ when $N \geq 4$, where $\lambda_1$ represents the principle eigenvalue of $(-\Delta)$ in $H^1_0(\Omega)$. In fact, when $N=3$, they obtained a stronger conclusion saying that there exists $\lambda_* \in (0, \lambda_1)$ such that for every $\lambda \in (\lambda_*, \lambda_1)$, problem \eqref{EucBNL} has a positive solution.  Moreover, they proved that problem \eqref{EucBNL} possesses a positive solution if and only if $\lambda \in (\lambda_1/4, \lambda_1)$ when $\Omega$ is a ball. On the other hand, if $\lambda \notin (0, \lambda_1)$, it is evident from the Poho\v{z}aev identity that problem \eqref{EucBNL} has no positive solution. Garc{\'i}a Azorero and  Peral Alonso  \cite{AP1991, AP1987} extended the results to the quasilinear setup involving the $p$-Laplacian to the following problem: 
\begin{equation} \label{P1}
\begin{cases}
&-\Delta_p u = |u|^{p^*-2} u+\lambda |u|^{q-2}u \quad \text{in} \,\, \Omega,\\&
u=0 \quad \text{on} \,\, \partial \Omega,
\end{cases}
\end{equation}
where $\Omega\subset \mathbb{R}^N$ is a bounded domain, $1<p<N$, $0<\lambda$, $1<q<p^*:= \frac{pN}{N-p}$ and $\Delta_p u=\nabla \cdot (|\nabla u|^{p-2} \nabla u)$. In \cite{AP1991}, Garc{\'i}a Azorero and  Peral Alonso proved that for \(1<q<p\), there exists a $\lambda_1>0$ such that for any $\lambda\in(0,\lambda_1)$, there are infinitely many nontrivial solutions to the problem \eqref{P1} using the properties of genus and truncated energy functional. Furthermore, for $p<q<p^*$, there exists $\lambda_2>0$ such that the problem \eqref{P1} has a nontrivial solution for every $\lambda> \lambda_2$. When $p=2$, Ambrosetti, Brezis, and Cerami \cite{ABC1994} established the existence of a $\lambda^*>0$ such that for $\lambda \in (0, \lambda^*)$, there are infinitely many solutions when $q < p < p^*$. For $p=2$, Bartsch and Willem \cite{BW1994} developed a new critical theorem that established the existence of infinitely many solutions without assuming a critical exponent. Degiovanni and Lancelotti \cite{Degiovanni2007} proved the existence of a nontrivial solution when $N\geq p^2$ and $\lambda>\lambda_1$ is not an eigenvalue and when $N^2/(N+1)>p^2$ and $\lambda\geq\lambda_1$. Egnell \cite{Egnell1988}, examined the case $p<N<p^2$ and proved that there exists $\lambda_*>0$ such that the problem \eqref{P1} has a positive solution for all $\lambda\in(\lambda_*,\lambda_1)$ with $p=q$. The existence of infinitely many positive solutions is studied by Cao {\it et al.}  \cite{CaoPengYan2013} for $N>p^2+p$, whereas for $p<q<p^*$, infinitely many solutions have been obtained by Garc{\'i}a Azorero and  Peral Alonso  \cite{AP1987} for any $\lambda>0$. Alves and Ding \cite{AlvesDing2010}, employed the Ljusternik–Schnirelmann category to prove that there exists $\bar{\lambda}$ such that for all $\lambda\in(0,\bar{\lambda})$, the problem \eqref{P1} has at least $cat(\Omega)$ positive solutions when $N\geq p^2$ and $2\leq p\leq q<p^*$. Schechter and Zou \cite{SZ2010} (see also \cite{SM2014} for $p=2$) proved when $p=q=2$ and $N\geq 7$, the problem \eqref{P1} has infinitely many solutions. In particular, if $\lambda\geq \lambda_1$, it has only sign-changing solutions except zero. Recently, He {\it et al.} \cite{HHZ2020} extended the results to the nonlinear case and proved that if problem \eqref{P1} admits a positive solution, then it has infinitely many sign-changing solutions.

The study of positive solutions and sign-changing solutions to the following nonlocal Brezis-Nirenberg type problems has also been studied extensively, involving the fractional Laplacian and fractional $p$-Laplacian:
\begin{equation} \label{P2}
\begin{cases}
&(-\Delta_p)^s u = |u|^{p_s^*-2} u+\lambda |u|^{q-2}u \quad \text{in} \,\, \Omega,\\&
u=0 \quad \text{in} \,\, \mathbb{R}^N\setminus \Omega,
\end{cases}
\end{equation}
where $\lambda>0$, $0<s<1$, $\Omega\subset \mathbb{R}^N$ is a bounded domain, $ps<N$,  $1<q<p_s^*:= \frac{pN}{N-sp}$. The fractional $p$- Laplacian $(-\Delta_p)^s u$ is defined as 
\begin{equation*}
    (-\Delta_p)^su(x):= C_{N,p,s} \lim _{\varepsilon \rightarrow 0} \int_{\mathbb{R}^{N} \backslash B_{\varepsilon}(x)} \frac{|u(x)-u(y)|^{p-2}(u(x)-u(y))}{|x-y|^{N+s p}} d y,\quad x\in \mathbb{R}^N,
\end{equation*}
where 
\begin{equation}\label{def c}
C_{N,p,s}:=\frac{\frac{sp}{2}(1-s)2^{2s-1}}{\pi^{\frac{N-1}{2}}}\frac{\Gamma(\frac{N+ps}{2})}{\Gamma(\frac{p+1}{2})\Gamma(2-s)}
\end{equation}
is the normalizing constant \cite{DGV21}. Servadei and Valdinoci \cite{SV2013, SV2015} studied the existence of positive solutions when $p=q=2$ and for all $s\in (0,1)$ whereas Tan \cite{T2011} studied a fractional Brezis-Nirenberg type problem involving the square root of the Laplacian. Mosconi {\it et al.} \cite{MPSY16} studied the problem \eqref{P2} in the quasilinear case involving fractional $p$-Laplacian when $q=p$. They proved the existence of nontrivial solutions extending the works due to \cite{Degiovanni2007, Egnell1988, AP1991} to the fractional case. In \cite{LST2017}, the authors established the existence of infinitely many solutions for $p=q=2$ using the Caffarelli-Silvestre extension, where the spectral version of the fractional Laplacian was considered. Ye and Zhang \cite{YZ2024} proved that for $1<q<p$, there exist $\lambda_*$ and $\lambda^*$, such that for all $\lambda\in(0,\lambda_*)$, the problem \eqref{P2} has a positive solution whereas for all $\lambda\in(0,\lambda^*)$, the problem \eqref{P2} has infinitely many solutions.

Recently, Brezis–Nirenberg type problems involving mixed local and nonlocal operators have attracted significant attention, and are given by
\begin{equation} \label{P3}
\begin{cases}
&-\Delta_pu +(-\Delta_p)^s u = |u|^{p^*-2} u+\lambda |u|^{q-2}u \quad \text{in} \,\, \Omega,\\&
u=0 \quad \text{in} \,\, \mathbb{R}^N\setminus \Omega,
\end{cases}
\end{equation}
where $\lambda>0$, $s\in(0,1)$, $\Omega\subset \mathbb{R}^N$ is a bounded domain, $1<p<N$ and $1<q<p^*:= \frac{pN}{N-p}$. When $p=2$, Biagi {\it et al.} \cite{BDVV22} studied the problem \eqref{P3} extending the works of Brezis-Nirenberg \cite{BN83} and Servadei-Valdinoci \cite{SV2013, SV2015}.  Da Silva \textit{et al.} \cite{DFV24} extended it for the whole range of $p\in(1,\infty)$ by exploring the results in three cases: $q\in(1,p)$, $q=p$ and $q\in(p,p^*)$. They establish the existence of a positive constant $\lambda_*>0$ such that for any $\lambda \in (0,\lambda_*)$, the problem \eqref{P3} possesses infinitely many nontrivial solutions with negative energy for $1<q<p$.  Moreover, if a certain condition is satisfied, they show the existence of multiple nontrivial solutions to the problem \eqref{P3} when $p=q$.  Furthermore, if a certain condition occurs, it proves that problem \eqref{P3} possesses a nontrivial solution when $p<q<p^*$. Recently, the first two authors \cite{BG2025} established the existence of infinitely many sign-changing solutions to a similar problem as \eqref{P3} with subcritical nonlinearities for $1<p<\infty$. For further studies involving mixed local and nonlocal operators, we refer to \cite{BG2025, BDVV22, DFV24, GCF2025} and the references therein. 

In this paper, we deal with the superposition operator $A_{\mu, p}$ of nonlinear fractional operators, introduced by Dipierro \textit{et al.} \cite{DPSV, DPSV2} and defined as follows:
\begin{equation}\label{superposition operator}
    A_{\mu, p} u:=\int_{[0,1]}(-\Delta)_{p}^{s} u \d \mu(s),
\end{equation} 
where $\mu$ is a signed measure given by
\begin{equation} \label{mudef}
    \mu:=\mu^{+}-\mu^{-}. 
\end{equation}
Here, the measures $\mu^{+}$ and $\mu^{-}$ are two nonnegative finite Borel measures on $[0,1]$ holds the following conditions: 
\begin{equation}\label{measure 1}
    \mu^{+}([\bar{s}, 1])>0,
\end{equation}
\begin{equation}\label{measure 2}
    \mu^{-}|_{[\bar{s}, 1]}=0,
\end{equation}
and
\begin{equation}\label{measure 3}
\mu^{-}([0, \bar{s}]) \leq \kappa \mu^{+}([\bar{s}, 1]) ,
\end{equation}
for some $\bar{s} \in(0,1]$ and $\kappa \geq 0.$ 

Note that, utilizing the condition \eqref{measure 1}, there exists another fractional exponent $s_{\sharp} \in[\bar{s}, 1]$ such that 
\begin{equation}\label{measure 4}
\mu^{+}\left(\left[s_{\sharp}, 1\right]\right)>0. 
\end{equation}

The notation $(-\Delta)_p^s$ denotes the fractional $p$-Laplacian, with the constant $C_{N,p,s}$ chosen to guarantee consistent limits as $s \nearrow 1$ and as $s \searrow 0$, specifically:
\begin{align*}
& \lim _{s \searrow 0}(-\Delta)_{p}^{s} u=(-\Delta)_{p}^{0} u:={|u|^{p-2}}u, \\
& \lim _{s \nearrow 1}(-\Delta)_{p}^{s} u=(-\Delta)_{p}^{1} u:=-\Delta_{p} u=-\operatorname{div}\left(|\nabla u|^{p-2} \nabla u\right).
\end{align*}

The signed measure $\mu$ allows each operator $(-\Delta)_p^s$ in \eqref{superposition operator} to influence the definition with potentially different signs. In our setting, conditions \eqref{measure 1}–\eqref{measure 3} facilitate the division of the interval $[0, 1]$ into two subintervals, where the right subinterval carries a specific sign and sufficiently outweighs the influence of the left. Since $\mu$ is a signed measure, each fractional $p$-Laplacian appearing in \eqref{superposition operator} may contribute with distinct signs. However, the assumptions \eqref{measure 1}–\eqref{measure 3} ensure that the negative parts of $\mu$ are effectively counterbalanced by its positive parts. As a result, the negative contributions in \eqref{superposition operator} do not impact the behaviour associated with higher fractional exponents.

An intriguing feature of the superposition operator in \eqref{superposition operator} is to include the negative $p$-Laplacian $-\Delta_{p} u$ (when $\mu$ is the Dirac measure concentrated at 1), the fractional $p$-Laplacian $(-\Delta)_p^{s}$ (when $\mu$ is the Dirac measure concentrated at a fractional power $s \in (0, 1)$)  and a  ``mixed order operator" $-\Delta_p+\epsilon (-\Delta)_p^{s}$ where $\epsilon \in (0,1]$ (when $\mu$ is the sum of two Dirac measures $\delta_1+\epsilon \delta_s,\,s \in (0, 1)$), among others. A noteworthy feature of the operators is that certain components of the operator may carry the “wrong sign,” as long as a dominant contribution typically associated with operators of higher fractional order ensures overall control. In the final section, that is, Section \ref{secexpa}, we present several concrete examples illustrating these phenomena and demonstrate the applicability of our main results. These superposition operators have several significant applications in anomalous diffusion, population dynamics, and mathematical biology, such as in models involving Gaussian processes and L\'evy flights. We refer to \cite{DV21, DPSV25, DPSV2025} for more discussion on this topic.

Initially, when $ p=2 $, Dipierro \textit{et al.} \cite{DPSV2} study the superposition problem involving jump nonlinearity and critical exponent.  Subsequently, for $p\in(1,\infty)$, Dipierro \textit{et al.} \cite{DPSV} examined the following linear problem involving critical exponent:
\begin{align}\label{problem 1}
A_{\mu,p} u &= \lambda |u|^{p-2}u+|u|^{p^*_{s_\sharp}-2}u ~\text{in}~\Omega,  \nonumber\\
 u & =0 ~\text{in}~ \mathbb{R}^N \setminus \Omega,
 \end{align}
where $\Omega$ be a bounded open subset of $\mathbb{R}^{N}$, $0\leq s\leq 1<p<\frac{N}{s_\sharp}$ and the critical exponent $p_{s_{\sharp}}^{*}$  defined as
\begin{equation}\label{critical exponent}
    p_{s_{\sharp}}^{*}:=\frac{N p}{N-s_{\sharp} p}.
\end{equation}
They prove that, there exists $\kappa_0>0$ such that if $\kappa \in [0,\kappa_0]$, the problem \eqref{problem 1} possesses $m$ pair of distinct nontrivial solutions when $\lambda$ satisfies $$\lambda_l-\frac{\mathcal{S}(p)}{|\Omega|^{\frac{s_\sharp p}{N}}}<\lambda<\lambda_l=....=\lambda_{l+m-1},$$
for some $l,m\geq 1$ and $\mathcal{S}(p)$ is defined in \eqref{eq3.5}. Very recently, the last two authors with Aikyn and Ruzhansky  \cite{AGKR2025} studied the following superlinear problem involving critical exponent:
\begin{align}\label{problem 3}
A_{\mu,p} u &= \lambda |u|^{q-2}u+|u|^{p^*_{s_\sharp}-2}u ~\text{in}~\Omega,  \nonumber\\
 u & =0 ~\text{in}~ \mathbb{R}^N \setminus \Omega,
 \end{align}
where $\Omega$ is a bounded open subset of $\mathbb{R}^{N}$, $0<\lambda$, $0\leq s\leq 1<p<\frac{N}{s_\sharp}$ and $p<q<p^*_{s_\sharp}$. They showed the existence of a positive constant $\lambda^*$ such that for any $\lambda \geq \lambda^*$, there exists a sufficiently small $\kappa_0>0$ such that for $\kappa \in [0, \kappa_0]$, problem \eqref{problem 3} possesses at least one nontrivial solution. For further study in this area, we suggest referring to the following papers \cite{ABB2025,AGKR2025,DV21,DPSV,DPSV2,DPSV1,DPSV25,DPSV2025} and the references therein.

Inspired by the aforementioned papers, particularly \cite{DFV24, DPSV}, we examine the following sublinear problem involving the critical exponent:
\begin{align}\label{problem 2}
A_{\mu,p} u &= \lambda |u|^{q-2}u+|u|^{p^*_{s_\sharp}-2}u ~\text{in}~\Omega  \nonumber\\
 u & =0 ~\text{in}~ \mathbb{R}^N \setminus \Omega,
 \end{align}
where $\Omega$ is a bounded open subset of $\mathbb{R}^{N}$, $0<\lambda$, $0\leq s\leq 1<p<\frac{N}{s_\sharp}$ and $1<q<p<p^*_{s_\sharp}$.

The primary difficulty in dealing with \eqref{problem 2} arises from the presence of the Sobolev critical exponent and the failure of the $(\mathrm{PS})_c$ condition for arbitrary energy levels $c\in \mathbb{R}$. The main challenges lie in verifying the $(\mathrm{PS})_c$ condition for the associated energy functional up to a certain level $c$, and in handling the superposition operator. The critical exponent prevents the use of compact embeddings, rendering standard techniques such as the convex-compact method and related approaches ineffective. Moreover, the $(\mathrm{PS})_c$ condition has not been previously established in the context of superposition operators when $1<q<p$, and the existing literature on such operators is quite limited. To overcome these obstacles, we adopt a variational framework that combines a truncation argument with the notion of genus, leading to the establishment of our main result.

\begin{thm}\label{thm1.1} Let $\Omega$ be a bounded open subset of $\mathbb{R}^{N}.$   Let $\mu=\mu^{+}-\mu^{-}$ with $\mu^{+}$ and $\mu^{-}$ satisfy conditions \eqref{measure 1}-\eqref{measure 3}. Assume that $s_{\sharp}$ is given as in \eqref{measure 4}.
   Let $1 < p < \frac{N}{s_\sharp}$ and $1 < q < p$. Then, there exist $\lambda_*>0$ and $\kappa_* \geq 0$ such that for all $\lambda \in (0, \lambda_*)$ and for all $\kappa \in [0, \kappa_*]$, the problem \eqref{problem 2} possesses infinitely many nontrivial solutions in $X_p(\Omega)$ with negative energy.
\end{thm}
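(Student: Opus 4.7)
The plan is to follow the truncation-plus-genus blueprint of García Azorero--Peral Alonso \cite{AP1991} and Da Silva--Fiscella--Viloria \cite{DFV24}, adapted to the superposition operator $A_{\mu,p}$. First I would associate with \eqref{problem 2} the energy functional
\begin{equation*}
I_\lambda(u) := \frac{1}{p}\,\|u\|_{X_p}^{p} - \frac{\lambda}{q}\int_\Omega |u|^{q}\,\d x - \frac{1}{p^*_{s_\sharp}}\int_\Omega |u|^{p^*_{s_\sharp}}\,\d x, \qquad u \in X_p(\Omega),
\end{equation*}
where $\|\cdot\|_{X_p}$ is the Gagliardo-type norm weighted by $\mu^{+}$. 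Using conditions \eqref{measure 1}--\eqref{measure 3}, for $\kappa$ sufficiently small the $\mu^{-}$-contribution is absorbed by the $\mu^{+}$-contribution, so $I_\lambda \in C^{1}(X_p(\Omega),\mathbb{R})$ and the Sobolev embedding gives
\begin{equation*}
I_\lambda(u) \ge \frac{1}{p}\|u\|^{p} - \frac{\lambda C_1}{q}\|u\|^{q} - \frac{C_2}{p^*_{s_\sharp}}\|u\|^{p^*_{s_\sharp}} =: h_\lambda(\|u\|).
\end{equation*}
Since $1 < q < p < p^*_{s_\sharp}$, for $\lambda \in (0,\lambda_*)$ with $\lambda_*$ small the function $h_\lambda$ has two positive zeros $0 < R_0 < R_1$ with $h_\lambda < 0$ on $(0,R_0)$ and $h_\lambda > 0$ on $(R_0,R_1)$.

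Next, I would pick a smooth even cut-off $\tau \in C^\infty(\mathbb{R}_+,[0,1])$ with $\tau \equiv 1$ on $[0,R_0]$ and $\tau \equiv 0$ on $[R_1,\infty)$, and define the truncated functional
\begin{equation*}
\widetilde{I}_\lambda(u) := \frac{1}{p}\|u\|^{p} - \frac{\lambda}{q}\int_\Omega |u|^{q}\,\d x - \frac{\tau(\|u\|)}{p^*_{s_\sharp}}\int_\Omega |u|^{p^*_{s_\sharp}}\,\d x,
\end{equation*}
which is even, of class $C^{1}$, bounded below, nonnegative outside $B_{R_0}$, and coincides with $I_\lambda$ on $\{\|u\|\le R_0\}$. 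The central analytic step is the verification of a local $(\mathrm{PS})_c$ condition for every $c<0$: if $(u_n)$ is a Palais--Smale sequence for $\widetilde{I}_\lambda$ at level $c<0$, the sign structure forces $\|u_n\|\le R_0$ eventually, so $(u_n)$ is effectively a Palais--Smale sequence for $I_\lambda$ confined to a small ball. Shrinking $\lambda_*$ if necessary, this level lies strictly below the critical concentration threshold associated with the best Sobolev constant of the embedding $X_p(\Omega) \hookrightarrow L^{p^*_{s_\sharp}}(\Omega)$, and a Lions-type concentration-compactness argument adapted to $A_{\mu,p}$ then rules out loss of mass and yields strong convergence in $X_p(\Omega)$.

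With the $(\mathrm{PS})_c$ condition for $c<0$ at hand, I would conclude by the Clark--Kajikiya symmetric mountain pass theorem stated through Krasnoselskii's genus $\gamma$. For each $k\ge 1$ set
\begin{equation*}
c_k := \inf_{A\in \Sigma_k}\,\sup_{u\in A}\, \widetilde{I}_\lambda(u), \qquad \Sigma_k := \{A \subset X_p(\Omega)\setminus\{0\} : A \text{ closed, symmetric, } \gamma(A)\ge k\}.
\end{equation*}
Picking a $k$-dimensional subspace $E_k \subset X_p(\Omega)$ and using the equivalence of norms on $E_k$ together with the dominance of the sublinear term near the origin, a small symmetric sphere in $E_k$ shows $-\infty < c_k < 0$ and $c_k \nearrow 0$ as $k\to\infty$. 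The standard deformation/minimax argument identifies each $c_k$ as a critical value of $\widetilde{I}_\lambda$ and produces infinitely many distinct critical points with negative energy; any such critical point $u$ satisfies $\|u\|\le R_0$, so it is a genuine critical point of $I_\lambda$, hence a nontrivial weak solution of \eqref{problem 2} with negative energy.

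The principal obstacle I anticipate is the verification of the local $(\mathrm{PS})_c$ condition. In the superposition framework one must simultaneously control contributions coming from a continuum of fractional orders weighted by the signed measure $\mu = \mu^+ - \mu^-$, and then adapt the Brezis--Lieb and concentration-compactness machinery to the critical embedding dictated by the top exponent $s_\sharp$. In particular, $\kappa_*$ must be chosen so that the $\mu^-$-part is absorbed quantitatively into the $\mu^+$-part, providing a uniform coercivity constant, and $\lambda_*$ must be chosen so that the compactness threshold remains uniform across $\kappa \in [0,\kappa_*]$. Once these quantitative dependencies are made precise, the remaining genus-based minimax steps are classical.
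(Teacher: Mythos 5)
Your proposal follows essentially the same route as the paper: truncate the critical term via a smooth cutoff of the norm, verify a local $(\mathrm{PS})_c$ condition for $c<0$ below the Sobolev concentration threshold, and then run the Clark/Kajikiya genus-based minimax to extract infinitely many negative critical values. One small correction is in order, however: the energy functional for problem \eqref{problem 2} must carry the signed contribution of $\mu^-$ explicitly, namely $\mathcal{I}_\lambda(u)=\tfrac{1}{p}\rho_p(u)^p-\tfrac{1}{p}\int_{[0,\bar s]}[u]_{s,p}^p\,\d\mu^-(s)-\tfrac{\lambda}{q}\|u\|_{L^q}^q-\tfrac{1}{p^*_{s_\sharp}}\|u\|_{L^{p^*_{s_\sharp}}}^{p^*_{s_\sharp}}$; if $\|u\|_{X_p}$ is weighted only by $\mu^+$, the critical points of your $I_\lambda$ do not solve \eqref{problem 2} unless $\mu^-\equiv0$. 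The paper handles this by introducing the equivalent norm $\eta_p(u)=\bigl(\rho_p(u)^p-\int_{[0,\bar s]}[u]_{s,p}^p\,\d\mu^-(s)\bigr)^{1/p}$ (Lemma \ref{lmn2.8}); the absorption estimate from Lemma \ref{reabsorb} is then used to show $\eta_p$ is equivalent to $\rho_p$ for small $\kappa$ (hence to get coercivity and the lower bound $\mathcal{I}_\lambda(u)\ge f_\lambda(\eta_p(u))$), not to justify discarding the $\mu^-$ term from the functional. Also, the paper's compactness step is carried out with the Brezis--Lieb-type decomposition (Lemma \ref{B-L lemma}) together with the Sobolev constant $\mathcal{S}(p)$ rather than the full Lions concentration-compactness machinery, but this is a presentational rather than substantive difference. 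With these precisions your blueprint coincides with the paper's proof.
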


We emphasize here that Theorem \ref{thm1.1}  complements the existence results for the Brezis--Nirenberg-type problems obtained by Dipierro {\it et al.} \cite{DPSV} for $q = p$ and by Aikyn {\it et al.} \cite{AGKR2025} for $p < q < p_{s_\sharp}^*$. It is worth mentioning that our approach extends and generalizes earlier results by Garc{\'i}a Azorero and Peral Alonso \cite{AP1991} and by Da Silva, Fiscella, and Viloria \cite{DFV24}.

The remainder of the paper is organized as follows. In Section \ref{pre}, we introduce the functional framework for our problem, review key preliminary results, and define an equivalent energy functional to the norm in the associated Sobolev space. Section \ref{sec7} is devoted to proving the main result, namely Theorem \ref{thm1.1}, by defining the notion of weak solutions and establishing foundational results tailored to our setting. Finally, in Section \ref{secexpa}, we illustrate the applicability of our main theorem through several concrete and meaningful examples.

\section{Functional Analytic setting for the superposition operator} \label{pre}

This section is devoted to the construction of the appropriate notation of the fractional Sobolev spaces and their properties, which are crucial for analyzing our problem. Similar developments of such spaces can be found in \cite{AGKR2025, DPSV, DPSV2, DPSV1}. Initially, for $s \in [0, 1]$, we define
\begin{equation*}
    [u]_{s, p}:= \begin{cases}\|u\|_{L^{p}\left(\mathbb{R}^{N}\right)} & \text { if } s=0, \\ \left(C_{N, s, p} \iint_{\mathbb{R}^{2 N}} \frac{|u(x)-u(y)|^{p}}{|x-y|^{N+s p}} \d x \d y\right)^{1 / p} & \text { if } s \in(0,1), \\ \|\nabla u\|_{L^{p}\left(\mathbb{R}^{N}\right)} & \text { if } s=1,\end{cases}
\end{equation*}
where $C_{N, s, p}$ is the normalizing constant as in \eqref{def c} and we have
$$\lim _{s \searrow 0^+}[u]_{s, p}=[u]_{0, p} \quad \text { and } \quad \lim _{s \nearrow 1^-}[u]_{s, p}=[u]_{1, p}.$$

We define the space $X_{p}(\Omega)$ as the collection of measurable functions $u: \mathbb{R}^{N} \rightarrow \mathbb{R}$ such that $u=0$ in $\mathbb{R}^{N} \backslash \Omega$, for which the following norm is finite:
\begin{equation}\label{norm on Xp} \|u\|_{X_p(\Omega)} = \rho_{p}(u):=\left(\int_{[0,1]}[u]_{s, p}^{p} \d \mu^{+}(s)\right)^{1 / p}< +\infty.
\end{equation}
The space $X_p(\Omega)$ was initially introduced in \cite{DPSV2} for $p=2$ and subsequently generalized in \cite{DPSV} for $1<p<\infty$. We present the following results from \cite{DPSV} (see also \cite{AGKR2025}), which are necessary for our study.
\begin{lem}\cite{AGKR2025, DPSV}\label{Uniform convexity}
    The $X_{p}(\Omega)$ is a separable Banach space for $1 \leq p < \infty$ and it is a uniformly convex Banach space, for $1<p < \infty$.
\end{lem}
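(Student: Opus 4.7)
The plan is to realize $X_p(\Omega)$ as an isometric copy of a closed subspace of a standard Lebesgue space, thereby transferring separability (for $1\le p<\infty$) and uniform convexity (for $1<p<\infty$) from $L^p$ for free. The key observation is that, for each $s\in(0,1)$, the Gagliardo seminorm $[u]_{s,p}$ is literally the $L^p$-norm of a weighted difference quotient on $\mathbb{R}^{2N}$, so the outer integration against $\mu^+$ in \eqref{norm on Xp} is genuinely an iterated $L^p$-norm.

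First I would verify that $\rho_p$ is a norm. Positivity and positive homogeneity are immediate; the triangle inequality follows by applying Minkowski's inequality to each seminorm $[\,\cdot\,]_{s,p}$ and then again in $L^p([0,1],d\mu^+)$. Non-degeneracy uses condition \eqref{measure 1}: if $\rho_p(u)=0$ then $[u]_{s,p}=0$ for $\mu^+$-a.e.\ $s$, and the positive mass on $[\bar s,1]$ produces some $s>0$ with $[u]_{s,p}=0$, forcing $u$ to be constant in $\Omega$ and hence identically zero by the exterior Dirichlet condition.

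For completeness, separability, and uniform convexity, I would then consider the isometric embedding
\begin{equation*}
J\colon X_p(\Omega)\longrightarrow L^p\bigl(\mathbb{R}^{2N}\times(0,1),\, dx\,dy\,d\mu^+(s)\bigr)\,\oplus\, L^p(\Omega)\,\oplus\, L^p(\Omega;\mathbb{R}^N),
\end{equation*}
defined on the fractional component by
\begin{equation*}
(Ju)(x,y,s):=C_{N,s,p}^{1/p}\,\frac{u(x)-u(y)}{|x-y|^{(N+sp)/p}},
\end{equation*}
and on the endpoint components by $\mu^+(\{0\})^{1/p}\,u$ and $\mu^+(\{1\})^{1/p}\,\nabla u$. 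By Fubini, the direct-sum $\ell^p$-norm of $Ju$ equals $\rho_p(u)$. Separability of $X_p(\Omega)$ then follows from separability of $L^p$ on a $\sigma$-finite measure space, and uniform convexity (for $1<p<\infty$) follows from Clarkson's inequalities in the target space, both properties descending to any closed subspace.

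The delicate step, which I expect to be the main obstacle, is to prove that $J\bigl(X_p(\Omega)\bigr)$ is closed; this also gives completeness. Given a $\rho_p$-Cauchy sequence $\{u_n\}$, one must extract a limit $u\in X_p(\Omega)$ and verify that $Ju_n\to Ju$ in the target. Condition \eqref{measure 1} is decisive here: the positive $\mu^+$-mass on $[\bar s,1]$ yields a uniform $W^{\bar s,p}_0(\Omega)$-bound on $\{u_n\}$, whence the fractional Poincaré inequality on the bounded domain $\Omega$ provides $L^p$-boundedness; extracting an a.e.\ pointwise limit $u$, which still vanishes outside $\Omega$, and applying Fatou's lemma pointwise in $s$ followed by integration against $\mu^+$ gives $\rho_p(u_n-u)\to 0$. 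An alternative route to uniform convexity when $p\ge 2$ is to apply Clarkson's first inequality directly to $[\,\cdot\,]_{s,p}^p$ and integrate against $\mu^+$; however, for $1<p<2$ Clarkson's second inequality is not additive, and the embedding argument above provides a uniform treatment.
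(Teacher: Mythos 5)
Your proposal is correct and, as the paper only cites this lemma from \cite{AGKR2025, DPSV}, I can only compare it with the route taken there, which is the same: realize $X_p(\Omega)$ as an isometric copy of a closed subspace of an $L^p$ space over the product measure $dx\,dy\,d\mu^+(s)$ (together with the endpoint atoms at $s=0,1$), then inherit separability for $1\le p<\infty$ and uniform convexity for $1<p<\infty$ from $L^p$, the latter via Clarkson. One step of your sketch is stated a bit loosely: $L^p$-\emph{boundedness} by itself does not produce an a.e.\ convergent subsequence. What you should say instead is that a $\rho_p$-Cauchy sequence is Cauchy in $W^{\bar s,p}_0(\Omega)$, because \eqref{measure 1} together with the uniform seminorm comparison of Lemma~\ref{Sobolev emb} gives $[u]_{\bar s,p}^p\,\mu^+([\bar s,1])\le C^p\rho_p(u)^p$; by the fractional Poincar\'e inequality the sequence is then Cauchy in $L^p(\Omega)$, hence convergent, and a subsequence converges a.e. After that your Fatou argument (applied to the Cauchy tail, and, if $\mu^+(\{1\})>0$, noting that the sequence is also Cauchy in $W^{1,p}_0(\Omega)$ so the gradients converge a.e.\ along a further subsequence) does close the argument and shows $J$ has closed range.
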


\begin{lem}\label{reabsorb} \cite[Proposition 4.1]{DPSV}
    Let $1<p<N$ and \eqref{measure 2} and \eqref{measure 3} be true. Then, there exists $c_{0}=c_{0}(N, \Omega, p)>0$ such that, for any $u \in X_{p}(\Omega)$, we have
\begin{equation*}
    \int_{[0, \bar{s}]}[u]_{s, p}^{p} \d \mu^{-}(s) \leq c_{0} \kappa \int_{[\bar{s}, 1]}[u]_{s, p}^{p} \d \mu(s)=c_{0} \kappa \int_{[\bar{s}, 1]}[u]_{s, p}^{p} \d \mu^+(s).
\end{equation*}
\end{lem}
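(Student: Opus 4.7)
The plan is to reduce the statement to a pointwise-in-$(s,t)$ uniform comparison of the $[\,\cdot\,]_{s,p}$ seminorms, and then to integrate against the measures, exploiting \eqref{measure 3} and \eqref{measure 1}.

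\textbf{Step 1 (uniform seminorm comparison).} The key claim is that there exists a constant $c_0 = c_0(N,\Omega,p)>0$ such that
\begin{equation*}
[u]_{s,p}^p \leq c_0 \, [u]_{t,p}^p \qquad \text{for every } u \in X_p(\Omega),\; s \in [0,\bar{s}],\; t \in [\bar{s},1].
\end{equation*}
For $s,t \in (0,1)$, I would split $\mathbb{R}^{2N}$ along $|x-y|\le 1$ versus $|x-y|>1$. On the near-diagonal part, the elementary monotonicity $|x-y|^{-sp}\le |x-y|^{-tp}$ (valid for $s\le t$ and $|x-y|\le 1$) gives directly
\begin{equation*}
\iint_{|x-y|\le 1}\frac{|u(x)-u(y)|^p}{|x-y|^{N+sp}}\d x\d y \;\leq\; \iint_{\mathbb{R}^{2N}}\frac{|u(x)-u(y)|^p}{|x-y|^{N+tp}}\d x\d y.
\end{equation*}
On the far-diagonal part, the support condition $u\equiv 0$ outside the bounded set $\Omega$, together with $|u(x)-u(y)|^p\le 2^{p-1}(|u(x)|^p+|u(y)|^p)$ and the radial computation $\int_{|x-y|>1}|x-y|^{-N-sp}\d y = c_N/(sp)$, yields a remainder of size $(c_N/sp)\|u\|_{L^p(\Omega)}^p$. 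Multiplying by $C_{N,s,p}$, the $s(1-s)$ factor built into the normalisation \eqref{def c} precisely cancels the $1/s$ blow-up, while the ratio $C_{N,s,p}/C_{N,t,p}$ stays bounded on $[0,\bar{s}]\times[\bar{s},1]$. The outcome is
\begin{equation*}
[u]_{s,p}^p \leq C_1\,[u]_{t,p}^p \;+\; C_2\,\|u\|_{L^p(\Omega)}^p,
\end{equation*}
with $C_1, C_2$ depending only on $N,p,\Omega$. A fractional Poincar\'e inequality on the bounded domain $\Omega$, with constant controlled from above uniformly for $t\in[\bar{s},1]$ since $\bar{s}>0$, then turns $\|u\|_{L^p(\Omega)}^p$ into $C_3\,[u]_{t,p}^p$, completing the comparison. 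The boundary cases $s=0$ and $t=1$ are handled by the identifications $[u]_{0,p}=\|u\|_{L^p}$ and $[u]_{1,p}=\|\nabla u\|_{L^p}$, which reduce to standard Poincar\'e bounds.

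\textbf{Step 2 (integration against $\mu^\pm$ and reabsorption).} With the uniform comparison in hand, I integrate in $s$ against $\d\mu^-$ over $[0,\bar{s}]$ and invoke \eqref{measure 3}: for every $t\in[\bar{s},1]$,
\begin{equation*}
\int_{[0,\bar{s}]}[u]_{s,p}^p\d\mu^-(s) \;\leq\; c_0\,\mu^-([0,\bar{s}])\,[u]_{t,p}^p \;\leq\; c_0\kappa\,\mu^+([\bar{s},1])\,[u]_{t,p}^p.
\end{equation*}
Since the left-hand side does not depend on $t$, I then integrate in $t$ against $\d\mu^+(t)$ over $[\bar{s},1]$ and divide through by $\mu^+([\bar{s},1])>0$, which is positive by \eqref{measure 1}. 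This produces
\begin{equation*}
\int_{[0,\bar{s}]}[u]_{s,p}^p\d\mu^-(s) \;\leq\; c_0\kappa\int_{[\bar{s},1]}[u]_{t,p}^p\d\mu^+(t).
\end{equation*}
The final identity $\int_{[\bar{s},1]}[u]_{s,p}^p\d\mu(s)=\int_{[\bar{s},1]}[u]_{s,p}^p\d\mu^+(s)$ is immediate from \eqref{measure 2}, as $\mu^-$ vanishes on $[\bar{s},1]$.

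\textbf{Main obstacle.} The delicate point is Step 1: obtaining a constant $c_0$ that is genuinely uniform over $(s,t)\in[0,\bar{s}]\times[\bar{s},1]$ and independent of $u$. Two features are essential here. First, the $s(1-s)$ factor in \eqref{def c}, without which the far-diagonal contribution would diverge as $s\to 0$ and the ratio $C_{N,s,p}/C_{N,t,p}$ would degenerate as $t\to 1$. Second, the assumption $\bar{s}>0$, which is what allows a single Poincar\'e constant to serve all $t\in[\bar{s},1]$. Once the uniform pointwise comparison is secured, the rest of the argument is purely a Tonelli-style reordering coupled with \eqref{measure 1}--\eqref{measure 3}.
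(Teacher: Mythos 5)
The paper does not prove this lemma at all: it is quoted verbatim from \cite[Proposition 4.1]{DPSV}, so there is no internal proof to compare against. Your overall strategy is the natural one, and your Step 2 is correct and clean: once one has a uniform comparison $[u]_{s,p}^p\le c_0[u]_{t,p}^p$ for $(s,t)\in[0,\bar s]\times[\bar s,1]$, integrating in $s$ against $\d\mu^-$, using \eqref{measure 3}, and then averaging in $t$ against $\d\mu^+/\mu^+([\bar s,1])$ gives exactly the claimed reabsorption (and the degenerate case $\mu^+([\bar s,1])=0$ is trivial, since \eqref{measure 3} then forces the left-hand side to vanish). Moreover, the uniform comparison you need in Step 1 is precisely Lemma \ref{Sobolev emb} of the paper (DPSV, Theorem 3.2), which you could simply invoke.

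The genuine gap is in your attempted re-derivation of that comparison. On the near-diagonal part you bound the unnormalised Gagliardo integral of order $s$ by that of order $t$ and then claim that the ratio $C_{N,s,p}/C_{N,t,p}$ is bounded on $[0,\bar s]\times[\bar s,1]$. It is not: by \eqref{def c}, $C_{N,t,p}\sim (1-t)$ as $t\nearrow 1$, while $C_{N,s,p}$ stays bounded away from zero for $s$ in a compact subset of $(0,\bar s]$, so the ratio blows up as $t\to1$. This is not a removable technicality — for smooth $u$ the unnormalised seminorm $\iint|u(x)-u(y)|^p|x-y|^{-N-tp}\d x\d y$ genuinely diverges like $(1-t)^{-1}$, which is exactly why the factor $(1-t)$ sits in the normalisation; a pointwise monotonicity of the kernels therefore cannot yield a constant uniform up to $t=1$. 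The correct route (as in the proof of DPSV, Theorem 3.2) passes through a Bourgain--Brezis--Mironescu-type estimate controlling $[u]_{s,p}$ by the \emph{normalised} seminorm $[u]_{t,p}$ uniformly near $t=1$. Your far-diagonal computation and the use of the $s$-factor to cancel the $1/s$ singularity at the other endpoint are fine; the repair is either to cite Lemma \ref{Sobolev emb} directly or to supply the BBM-type argument for $t$ near $1$.
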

The following uniform Sobolev embedding lemma is essential to establish the embeddings for the space $X_p(\Omega)$ into Lebesgue spaces.
\begin{lem}\label{Sobolev emb} \cite[Theorem 3.2]{DPSV}
Let $\Omega$ be an open, bounded subset of $\mathbb{R}^{N}$ and $p \in(1, N)$.
Then, there exists $C=C(N, \Omega, p)>0$ such that, for every $s_1, s_2 \in[0,1]$ with $s_1 \leq s_2$ and every measurable function $u: \mathbb{R}^{N} \rightarrow \mathbb{R}$ with $u=0$ a.e. in $\mathbb{R}^{N} \setminus \Omega$, we have
$$
[u]_{s_1, p} \leq C[u]_{s_2, p}.
$$
\end{lem}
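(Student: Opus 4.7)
The plan is to prove $[u]_{s_1,p}\le C[u]_{s_2,p}$ via a near-field/far-field splitting of the Gagliardo-type double integral defining $[u]_{s_1,p}$, combined with a uniform fractional Poincaré-type bound to convert the far-field piece into a multiple of $[u]_{s_2,p}$. By density of $C_c^\infty(\Omega)$ in $X_p(\Omega)$ (available via Lemma \ref{Uniform convexity} together with a standard truncation/mollification argument), it suffices to work with smooth, compactly supported $u$; the trivial case $s_1=s_2$ and the endpoint cases $s_1=0$ or $s_2=1$ are handled separately using the identifications $[u]_{0,p}=\|u\|_{L^p}$ and $[u]_{1,p}=\|\nabla u\|_{L^p}$, together with the continuity of $[u]_{s,p}$ in $s$ recalled in Section \ref{pre}.

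Fix $R:=2\,\mathrm{diam}(\Omega)$, so that whenever $|x-y|>R$ with $x\in\Omega$, necessarily $y\notin\Omega$ and hence $u(y)=0$. On the near-field region $\{|x-y|\le R\}$ I use the pointwise kernel comparison
\[
\frac{1}{|x-y|^{N+s_1 p}}=\frac{|x-y|^{(s_2-s_1)p}}{|x-y|^{N+s_2 p}}\le \frac{\max(1,R)^p}{|x-y|^{N+s_2 p}},
\]
so the near-field contribution to the unnormalized $s_1$-seminorm is bounded by $\max(1,R)^p\, C_{N,s_2,p}^{-1}[u]_{s_2,p}^p$. On the far-field region $\{|x-y|>R\}$ the support condition reduces the integrand to $|u(x)|^p+|u(y)|^p$; integrating the outer variable in polar coordinates gives a bound of the form $c_N(s_1 p)^{-1}R^{-s_1 p}\|u\|_{L^p(\Omega)}^p$, in which the singular factor $1/s_1$ as $s_1\searrow 0$ is absorbed by the $s$-factor appearing in $C_{N,s_1,p}$ through the explicit formula \eqref{def c}.

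The remaining ingredient is a uniform Poincaré-type bound $\|u\|_{L^p(\Omega)}\le C[u]_{s_2,p}$ with $C$ independent of $s_2\in[0,1]$, which is where I expect the main obstacle to lie: the normalization $C_{N,s,p}$ degenerates at both endpoints and must be balanced precisely against the singular behaviour of the unnormalized Gagliardo integrals. I would establish this uniform Poincaré either by bootstrapping the same splitting with $s_1=0$ (exploiting $[u]_{0,p}=\|u\|_{L^p}$ and the continuity of $C_{N,s,p}$ on $[0,1]$) or, alternatively, by combining the classical Poincaré inequality at $s=1$ with a Bourgain--Brezis--Mironescu / Maz'ya--Shaposhnikova-style interpolation argument in $s$. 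Once this is in place, combining the near- and far-field estimates, multiplying by $C_{N,s_1,p}$, and tracking the ratio $C_{N,s_1,p}/C_{N,s_2,p}$ on $\{(s_1,s_2)\in[0,1]^2:s_1\le s_2\}$ via \eqref{def c} produces the asserted constant $C=C(N,\Omega,p)$, uniform in $s_1$ and $s_2$.
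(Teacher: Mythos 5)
First, a remark on scope: the paper does not prove this lemma at all --- it is quoted verbatim from \cite[Theorem 3.2]{DPSV} --- so there is no internal proof to compare against, and I am assessing your argument on its own merits. The near-field/far-field splitting is a natural start, and your far-field estimate is fine (including the observation that the $1/s_1$ singularity is cancelled by the factor $s_1$ in $C_{N,s_1,p}$). The genuine gap is in the near-field step, exactly where you assert that ``tracking the ratio $C_{N,s_1,p}/C_{N,s_2,p}$ on $\{s_1\le s_2\}$ via \eqref{def c} produces the asserted constant, uniform in $s_1$ and $s_2$.'' From \eqref{def c} one has $C_{N,s,p}= s(1-s)\,g(s)$ with $g$ continuous and positive on $[0,1]$, hence $C_{N,s_1,p}/C_{N,s_2,p}\asymp \tfrac{s_1(1-s_1)}{s_2(1-s_2)}$, and this ratio is \emph{unbounded} on $\{s_1\le s_2\}$: take $s_2\nearrow 1$ with $s_1$ fixed, or more generally $1-s_1\gg 1-s_2$. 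Your pointwise kernel comparison is an inequality between the \emph{unnormalized} Gagliardo integrals; reinstating the normalizations turns it into a bound whose constant blows up like $(1-s_2)^{-1}$. The lemma is nonetheless true with a uniform constant, but only because the unnormalized $s_2$-integral itself diverges like $(1-s_2)^{-1}$ near the diagonal (the Bourgain--Brezis--Mironescu phenomenon) --- an effect the crude comparison $|x-y|^{-(N+s_1p)}\le \max(1,R)^p|x-y|^{-(N+s_2p)}$ discards. A correct proof must treat $s_2$ near $1$ by a different mechanism, e.g.\ reducing to $s_2=1$ via $|u(x)-u(y)|\le |x-y|\int_0^1|\nabla u(x+t(y-x))|\,dt$, where the factor $(1-s_1)$ in $C_{N,s_1,p}$ absorbs the divergence of $\int_0^R r^{(1-s_1)p-1}\,dr$; and ``continuity of $[u]_{s,p}$ in $s$,'' being a statement for each fixed $u$, cannot supply the required uniformity at the endpoint.

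The same endpoint undermines your uniform Poincar\'e inequality $\|u\|_{L^p(\Omega)}\le C[u]_{s_2,p}$, which you correctly identify as the crux but do not establish. The ``bootstrap'' route is empty at $s_1=0$ (there is no double integral to split), and the natural lower bound obtained by restricting the $s_2$-Gagliardo integral to $\{x\in\Omega,\ |x-y|\ge \operatorname{diam}\Omega\}$ gives $[u]_{s_2,p}^p\gtrsim (1-s_2)\|u\|_{L^p}^p$, which again degenerates as $s_2\to 1$; the Bourgain--Brezis--Mironescu/Maz'ya--Shaposhnikova route you mention is the right one but rests on a compactness or contradiction argument, not a routine interpolation, so stating it as an alternative does not close the proof. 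A minor further point: the reduction to $C_c^\infty(\Omega)$ is both unnecessary for the splitting and delicate for an arbitrary bounded open $\Omega$, since $X_p(\Omega)$ is defined as a set of measurable functions with finite norm rather than as a closure of test functions; it is cleaner to argue directly for measurable $u$ with $[u]_{s_2,p}<\infty$.
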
 
\begin{prop}\cite[Proposition 2.5]{AGKR2025}\label{compact and cont embedding}
Assume that \eqref{measure 1}–\eqref{measure 4} hold. Then, there exists a positive constant $\bar{c}=\bar{c}\left(N, \Omega, s_{\sharp}, p\right)$ such that, for any $u \in X_{p}(\Omega)$, we have
\begin{equation} \label{Xpusialemb}
[u]_{s_{\sharp},p} \leq \bar{c}\left(\int_{[0,1]}[u]_{s,p}^{p} \mathrm{~d} \mu^{+}(s)\right)^{\frac{1}{p}}.
\end{equation}
 In particular, the embedding 
\begin{equation}\label{embedding}
    X_p(\Omega) \hookrightarrow L^{r}(\Omega)
\end{equation}
is continuous for all $r \in[1,p_{s_{\sharp}}^{*}]$ and compact for all $r \in[1,p_{s_{\sharp}}^{*})$.
\end{prop}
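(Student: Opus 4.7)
The plan is to reduce \eqref{Xpusialemb} to a statement purely about the behaviour of $[u]_{s,p}$ on the interval $[s_\sharp,1]$, exploiting the uniform monotonicity from Lemma \ref{Sobolev emb} together with the mass condition $\mu^{+}([s_\sharp,1])>0$ provided by \eqref{measure 4}; the two embedding statements then follow by composing \eqref{Xpusialemb} with the classical (fractional) Sobolev inequality and Rellich--Kondrachov theorem on $W^{s_\sharp,p}_0(\Omega)$. To establish the key inequality, I would first fix an arbitrary $s\in[s_\sharp,1]$ and apply Lemma \ref{Sobolev emb} with $s_1=s_\sharp\le s_2=s$ to obtain a constant $C=C(N,\Omega,p)$, independent of $s$, satisfying $[u]_{s_\sharp,p}\le C\,[u]_{s,p}$. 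Raising to the $p$-th power and integrating against $\mu^{+}$ over $[s_\sharp,1]$ gives
$$\mu^{+}([s_\sharp,1])\,[u]_{s_\sharp,p}^{p}\le C^{p}\int_{[s_\sharp,1]}[u]_{s,p}^{p}\,\d\mu^{+}(s)\le C^{p}\int_{[0,1]}[u]_{s,p}^{p}\,\d\mu^{+}(s).$$
By \eqref{measure 4} the prefactor $\mu^{+}([s_\sharp,1])$ is strictly positive, so dividing through and taking $p$-th roots yields \eqref{Xpusialemb} with the explicit constant $\bar c:=C\,\mu^{+}([s_\sharp,1])^{-1/p}$, which depends only on $N,\Omega,p,s_\sharp$ as required.

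With \eqref{Xpusialemb} in hand, the continuous embedding $X_p(\Omega)\hookrightarrow L^{p_{s_\sharp}^{*}}(\Omega)$ follows by chaining it with the standard fractional Sobolev inequality $\|u\|_{L^{p_{s_\sharp}^{*}}(\Omega)}\le C_{\mathrm{Sob}}\,[u]_{s_\sharp,p}$, valid because $u$ vanishes outside the bounded set $\Omega$ (and reducing to the classical Sobolev inequality in the limiting case $s_\sharp=1$). For an intermediate exponent $r\in[1,p_{s_\sharp}^{*}]$, I would then interpolate via H\"older's inequality using $|\Omega|<\infty$ to pass from $L^{p_{s_\sharp}^{*}}(\Omega)$ to $L^{r}(\Omega)$.

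For the compactness statement, any bounded sequence $\{u_n\}\subset X_p(\Omega)$ is, by \eqref{Xpusialemb}, also bounded in $W^{s_\sharp,p}_0(\Omega)$; the Rellich--Kondrachov-type compactness for fractional Sobolev spaces on bounded domains (the classical theorem in the limiting case $s_\sharp=1$) then extracts a subsequence converging strongly in $L^{r}(\Omega)$ for every $r\in[1,p_{s_\sharp}^{*})$. The only substantive point in the whole argument is ensuring that the comparison constant $C$ supplied by Lemma \ref{Sobolev emb} is genuinely uniform in $s\in[s_\sharp,1]$, because otherwise the integration against $\mu^{+}$ in the first step would produce an undesirable $s$-dependence; this uniformity is, however, exactly what Lemma \ref{Sobolev emb} provides, so the remaining work is essentially bookkeeping and invocation of well-established embedding theorems.
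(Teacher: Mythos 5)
Your argument is correct and is exactly the standard proof of this result: the paper itself does not prove Proposition \ref{compact and cont embedding} but imports it from \cite{AGKR2025} (see also \cite{DPSV}), where the inequality \eqref{Xpusialemb} is obtained precisely by integrating the uniform comparison $[u]_{s_\sharp,p}\le C\,[u]_{s,p}$ of Lemma \ref{Sobolev emb} against $\mu^{+}$ over $[s_\sharp,1]$ and dividing by $\mu^{+}([s_\sharp,1])>0$, and the embeddings then follow from the fractional Sobolev and Rellich--Kondrachov theorems as you describe. The only cosmetic remark is that your constant carries the explicit factor $\mu^{+}([s_\sharp,1])^{-1/p}$, a dependence on $\mu$ that the statement's notation $\bar c(N,\Omega,s_\sharp,p)$ suppresses.
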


We now recall the following weak convergence results from \cite{DPSV}. 
\begin{lem} \label{weak convergence}  \cite[ Lemma 5.8]{DPSV}
    Let $(u_{n})$ be a bounded sequence in $X_{p}(\Omega)$. Then, there exists $u \in X_{p}(\Omega)$ such that for all $v \in X_{p}(\Omega)$, we have
\begin{align}
    \lim _{n \rightarrow\infty} &\int_{[0,1]}\left(\iint_{\mathbb{R}^{2 N}} \frac{\left|u_{n}(x)-u_{n}(y)\right|^{p-2}\left(u_{n}(x)-u_{n}(y)\right)(v(x)-v(y))}{|x-y|^{N+s p}} \d x \d y\right) \d \mu^{ \pm}(s)\nonumber \\
&=\int_{[0,1]}\left(\iint_{\mathbb{R}^{2 N}} \frac{|u(x)-u(y)|^{p-2}(u(x)-u(y))(v(x)-v(y))}{|x-y|^{N+s p}} \d x \d y\right) \d \mu^{ \pm}(s).
\end{align}
\end{lem}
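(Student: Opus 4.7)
My proof proceeds in three movements: extract a weakly convergent subsequence in $X_p(\Omega)$, promote this to a.e.\ convergence via the compact embedding and argue pointwise in $s$ that the inner nonlinear pairing converges, and finally close with a Vitali-type passage to the limit in the outer $s$-integral.

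\emph{Step 1 (weakly convergent subsequence and a.e.\ convergence).} By Lemma~\ref{Uniform convexity}, $X_p(\Omega)$ is uniformly convex, hence reflexive. Therefore the bounded sequence $(u_n)$ admits a subsequence, still denoted $(u_n)$, and a limit $u\in X_p(\Omega)$ with $u_n\rightharpoonup u$ weakly in $X_p(\Omega)$. Since $1<p<p^*_{s_\sharp}$, Proposition~\ref{compact and cont embedding} gives $u_n\to u$ in $L^p(\Omega)$; extracting a further subsequence, $u_n\to u$ a.e.\ in $\mathbb{R}^N$ (extending by zero outside $\Omega$).

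\emph{Step 2 (pointwise-in-$s$ convergence of the inner integral).} Fix $s\in(0,1)$ and set $U_n^s(x,y):=(u_n(x)-u_n(y))/|x-y|^{(N+sp)/p}$, with $U^s,V^s$ defined analogously from $u,v$. Then $U_n^s,U^s,V^s\in L^p(\mathbb{R}^{2N})$, with $\|U_n^s\|_{L^p(\mathbb{R}^{2N})}^p$ controlled by $[u_n]_{s,p}^p$; the boundedness of $(u_n)$ in $X_p(\Omega)$ together with Lemma~\ref{Sobolev emb} renders this bound uniform in $n$ (and locally uniform in $s$). Since $U_n^s\to U^s$ a.e.\ on $\mathbb{R}^{2N}$, the sequence $|U_n^s|^{p-2}U_n^s$ is bounded in $L^{p/(p-1)}(\mathbb{R}^{2N})$ and converges a.e.\ to $|U^s|^{p-2}U^s$. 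By the reflexivity of $L^{p/(p-1)}$ and uniqueness of the weak limit (which must coincide with the a.e.\ limit), I conclude that $|U_n^s|^{p-2}U_n^s\rightharpoonup|U^s|^{p-2}U^s$ weakly in $L^{p/(p-1)}(\mathbb{R}^{2N})$. Pairing against $V^s\in L^p(\mathbb{R}^{2N})$ yields pointwise-in-$s$ convergence
\begin{equation*}
F_n(s):=\iint_{\mathbb{R}^{2N}}\frac{|u_n(x)-u_n(y)|^{p-2}(u_n(x)-u_n(y))(v(x)-v(y))}{|x-y|^{N+sp}}\,\d x\,\d y\longrightarrow F(s).
\end{equation*}

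\emph{Step 3 (Vitali-type passage in $s$).} H\"older's inequality on $\mathbb{R}^{2N}$ gives $|F_n(s)|\leq c\,[u_n]_{s,p}^{p-1}[v]_{s,p}/C_{N,s,p}$, with the normalizing constant $C_{N,s,p}$ being absorbed into the measure as in the weak formulation of $A_{\mu,p}$. For any measurable $E\subset[0,1]$, two further applications of H\"older with exponents $p/(p-1)$ and $p$ against $\d\mu^{\pm}(s)$ yield
\begin{equation*}
\int_E |F_n(s)|\,\d\mu^{\pm}(s)\leq c\left(\int_E [u_n]_{s,p}^{p}\,\d\mu^{\pm}(s)\right)^{(p-1)/p}\left(\int_E [v]_{s,p}^{p}\,\d\mu^{\pm}(s)\right)^{1/p}.
\end{equation*}
For $\mu^+$, the first factor is bounded uniformly in $n$ by $\|u_n\|_{X_p(\Omega)}^{p-1}$; for $\mu^-$, the reabsorption bound of Lemma~\ref{reabsorb} combined with \eqref{measure 2} controls it by $(c_0\kappa)^{(p-1)/p}\|u_n\|_{X_p(\Omega)}^{p-1}$. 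In both cases the second factor tends to $0$ as $\mu^{\pm}(E)\to 0$, since $[v]_{s,p}^p$ is $\mu^{\pm}$-integrable (again invoking Lemma~\ref{reabsorb} for $\mu^-$). This delivers the uniform $\mu^{\pm}$-integrability of $(F_n)$, and Vitali's convergence theorem gives the claimed limit for both $\d\mu^+$ and $\d\mu^-$.

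\emph{Main obstacle.} The subtlest point is Step~3: one needs a single uniformly $\mu^{\pm}$-integrable majorant for $(F_n(s))_{n\geq 1}$ that respects the structural asymmetry between the positive and negative parts of $\mu$. The reabsorption inequality of Lemma~\ref{reabsorb}, itself powered by \eqref{measure 2}--\eqref{measure 3}, is precisely what converts bounds against $\mu^-$ into bounds against $\mu^+$ and makes the Vitali estimate closable. A secondary bookkeeping issue is the behavior of the normalizing constant $C_{N,s,p}$ near the endpoints $s=0,1$, which must be tracked consistently with the weak formulation of $A_{\mu,p}$ so that the estimates above remain meaningful.
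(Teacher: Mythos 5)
The paper does not prove this lemma; it is quoted verbatim from \cite[Lemma 5.8]{DPSV}, so there is no in-paper argument to compare against. Your proof is correct and follows the standard (and, in substance, the expected) route: weak compactness plus the compact embedding to get a.e.\ convergence, the classical ``a.e.\ convergence plus $L^{p'}$-boundedness implies weak $L^{p'}$-convergence to the a.e.\ limit'' lemma applied to $|U_n^s|^{p-2}U_n^s$ for each fixed $s$, and a Vitali/uniform-integrability passage in the $s$-variable, which is genuinely needed since $\int_{[0,1]}[u_n]_{s,p}^p\,\d\mu^{+}(s)\leq M$ does not furnish an $n$-independent pointwise dominant on $(s_\sharp,1]$. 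The only point stated too quickly is the uniform-in-$n$ bound on $[u_n]_{s,p}$ for \emph{fixed} $s$: this is not ``locally uniform in $s$'' in general, but it does hold for $\mu^{\pm}$-a.e.\ $s$, since Lemma~\ref{Sobolev emb} gives the quasi-monotonicity $[u_n]_{s,p}\leq C[u_n]_{s',p}$ for $s\leq s'$, which combined with $\mu^{+}([s,1])>0$ (true for $\mu^{+}$-a.e.\ $s$) and with $\mathrm{supp}\,\mu^{-}\subset[0,\bar s]$, $\bar s\leq s_\sharp$, yields the bound exactly where it is needed; you should also record that the conclusion holds along the extracted subsequence. Neither point affects the validity of the argument.
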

We state the following Brezis-Lieb type lemma from \cite{DPSV}. 
\begin{lem}\label{B-L lemma} \cite[ Lemma 5.9]{DPSV}
   Let $(u_{n})$ be a bounded sequence in $X_{p}(\Omega)$. Suppose that $u_{n}$ converges to some $u$ a.e. in $\mathbb{R}^{N}$ as $n \rightarrow\infty$. Then we have
\begin{equation}
\int_{[0,1]}[u]_{s, p}^{p} \d \mu^{ \pm}(s)=\lim _{n \rightarrow\infty}\left(\int_{[0,1]}\left[u_{n}\right]_{s, p}^{p} \d \mu^{ \pm}(s)-\int_{[0,1]}\left[u_{n}-u\right]_{s, p}^{p} \d \mu^{ \pm}(s)\right).
\end{equation}
\end{lem}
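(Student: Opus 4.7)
The plan is to establish the identity by a two-layer application of the classical Brezis--Lieb lemma: first pointwise in the fractional exponent $s$ (where the lemma is applied in a Lebesgue space), and then integrated over $[0,1]$ against $\mu^\pm$ via dominated convergence. I present the argument for $\mu^+$; the case $\mu^-$ is identical since only the fact that $\mu^\pm$ are finite Borel measures on $[0,1]$ is used.

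First, fix $s \in (0,1)$ and set
$$F_n^s(x,y) := \frac{u_n(x) - u_n(y)}{|x-y|^{(N+sp)/p}}, \qquad F^s(x,y) := \frac{u(x) - u(y)}{|x-y|^{(N+sp)/p}},$$
so that $C_{N,p,s}\|F_n^s\|_{L^p(\mathbb{R}^{2N})}^p = [u_n]_{s,p}^p$. The a.e.\ convergence $u_n \to u$ on $\mathbb{R}^N$ transfers to $F_n^s \to F^s$ a.e.\ on $\mathbb{R}^{2N}$, and the classical Brezis--Lieb lemma in $L^p(\mathbb{R}^{2N})$ yields the pointwise-in-$s$ identity
$$\lim_{n\to\infty}\bigl([u_n]_{s,p}^p - [u_n - u]_{s,p}^p\bigr) = [u]_{s,p}^p.$$
The boundary cases $s = 0$ and $s = 1$ are handled by the classical $L^p(\mathbb{R}^N)$ Brezis--Lieb applied respectively to $u_n$ and to $\nabla u_n$.

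Next, I would establish a uniform-in-$n$ domination suitable for dominated convergence on $([0,1],\mu^+)$. Recall the refined Brezis--Lieb inequality: for every $\epsilon > 0$ there exists $C_\epsilon > 0$ such that $\bigl||a|^p - |a-b|^p - |b|^p\bigr| \leq \epsilon|a-b|^p + C_\epsilon|b|^p$ for all $a,b \in \mathbb{R}$. Applied pointwise with $a = F_n^s(x,y)$, $b = F^s(x,y)$, and integrated against $C_{N,p,s}\, \d x\, \d y$, this produces
$$V_n(s) := \bigl|[u_n]_{s,p}^p - [u_n-u]_{s,p}^p - [u]_{s,p}^p\bigr| \leq \epsilon\, [u_n - u]_{s,p}^p + G_{n,\epsilon}(s),$$
where $G_{n,\epsilon}(s) \to 0$ as $n \to \infty$ for each $s$ (by the previous step) and $G_{n,\epsilon}(s) \leq (C_\epsilon + 1)[u]_{s,p}^p$. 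Since $\{u_n\}$ is bounded in $X_p(\Omega)$ and $X_p(\Omega)$ is reflexive by Lemma~\ref{Uniform convexity}, $u_n \rightharpoonup u$ up to a subsequence, and $u \in X_p(\Omega)$ by weak lower semicontinuity; hence the dominating function $(C_\epsilon + 1)[u]_{\cdot,p}^p$ is $\mu^+$-integrable. Integrating $V_n$ against $\mu^+$ and invoking the dominated convergence theorem on $[0,1]$ yields
$$\int_{[0,1]} V_n(s)\, \d\mu^+(s) \leq \epsilon\, \sup_n \|u_n - u\|_{X_p(\Omega)}^p + \int_{[0,1]} G_{n,\epsilon}(s)\, \d\mu^+(s) \longrightarrow \epsilon\, C,$$
and letting $\epsilon \to 0$ concludes the proof.

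The main obstacle is the contribution at $s = 1$ in the pointwise step when $\mu^+(\{1\}) > 0$: the hypothesis $u_n \to u$ a.e.\ on $\mathbb{R}^N$ does not automatically upgrade to a.e.\ convergence of $\nabla u_n$. In the intended applications to Palais--Smale sequences of the functional associated with \eqref{problem 2}, this is recovered by a Boccardo--Murat-type a.e.\ convergence argument for gradients based on the local nonlinear ellipticity of $A_{\mu,p}$. In the absence of such structural information on $u_n$, one either restricts to measures $\mu^+$ carrying no mass at $\{1\}$, or combines the weak convergence $\nabla u_n \rightharpoonup \nabla u$ in $L^p$ with the energy bound to extract an a.e.\ convergent subsequence of gradients.
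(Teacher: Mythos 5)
The paper itself does not prove this lemma; it imports it verbatim from \cite[Lemma 5.9]{DPSV}, so there is no in-paper argument to compare yours against, and your proposal must stand on its own. Your two-layer strategy (classical Brezis--Lieb fibrewise in $s$, then dominated convergence in $s$) is the natural one and is essentially sound for the part of $\mu^{\pm}$ carried by $[0,1)$, with two caveats. First, the pointwise-in-$s$ identity $\lim_n([u_n]_{s,p}^p-[u_n-u]_{s,p}^p)=[u]_{s,p}^p$ requires $\sup_n[u_n]_{s,p}<\infty$ \emph{at that particular $s$}, which does not follow from boundedness of $\rho_p(u_n)$ (an $L^1(\d\mu^+)$ bound in $s$) for an individual $s>s_\sharp$; the clean fix is to run the Brezis--Lieb argument once on the product measure space $\mathbb{R}^{2N}\times(0,1)$ with the measure $C_{N,s,p}|x-y|^{-N-sp}\d x\,\d y\,\d\mu^{\pm}(s)$, where the required uniform $L^p$ bound is exactly the $X_p(\Omega)$-bound and a.e.\ convergence in $(x,y,s)$ follows from a.e.\ convergence of $u_n$ by Fubini. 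Second, the case of $\mu^-$ is not literally ``identical'': the needed bound $\sup_n\int_{[0,\bar s]}[u_n]_{s,p}^p\,\d\mu^-(s)<\infty$ comes from Lemma~\ref{reabsorb}, not merely from finiteness of $\mu^-$.

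The genuine gap is the atom at $s=1$, and your proposed repair for it is incorrect. When $\mu^+(\{1\})>0$ one must show $\|\nabla u_n\|_{L^p}^p-\|\nabla(u_n-u)\|_{L^p}^p\to\|\nabla u\|_{L^p}^p$, and, as you note, Brezis--Lieb for this term needs $\nabla u_n\to\nabla u$ a.e., which the hypotheses do not supply. But your fallback --- combining $\nabla u_n\rightharpoonup\nabla u$ in $L^p$ with the energy bound ``to extract an a.e.\ convergent subsequence of gradients'' --- is false: weak convergence plus a norm bound in $L^p$ does not yield an a.e.\ convergent subsequence (e.g.\ $u_n(x)=n^{-1}\varphi(x)\sin(nx_1)$ has gradients that are bounded and weakly convergent but admit no a.e.\ convergent subsequence). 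Since $\mu^+(\{1\})>0$ is precisely the situation of the $p$-Laplacian and of the mixed local--nonlocal operators that constitute the paper's principal examples in Section~\ref{secexpa}, this is not a corner case. To close the argument you must either cite \cite[Lemma 5.9]{DPSV} as a black box, or restrict to the sequences to which the lemma is actually applied (the Palais--Smale sequences in Lemma~\ref{PS condition lem}) and establish a.e.\ gradient convergence for them by a Boccardo--Murat-type argument; as stated, for arbitrary bounded a.e.-convergent sequences, your proof does not cover the local atom.
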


We now construct a new energy functional $\eta_{p}$ on $X_p(\Omega)$ as 
 \begin{align}\label{eq2.6}
         \eta_{p}(u):=\left(\int_{[0,1]}[u]_{s, p}^{p} \d \mu^{+}(s)-\int_{[0,\bar{s})}[u]_{s, p}^{p} \d \mu^{-}(s)\right)^{1 / p}.
    \end{align}
Note that for $\kappa=0$, $\eta_{p}$ is a norm on $X_p(\Omega)$ and we have $\rho_p(u)=\eta_p(u)$ for any $u\in X_p(\Omega)$. Moreover, when $p=2$,  $\eta_2(u)$ acts as a norm on $X_2(\Omega)$. Indeed, we define $\eta_2(u):=\sqrt{\langle u,u \rangle}$ for any $u\in X_2(\Omega)$ with $${\langle u,v\rangle}:={\langle u,v\rangle}^+-{\langle u,v\rangle}^-,$$ 
    where
    $${\langle u,v\rangle}^+:=\int_{[0,1]}\left(\iint_{\mathbb{R}^{2 N}} \frac{C_{N, s}(u(x)-u(y))(v(x)-v(y))}{|x-y|^{N+2s}} \d x \d y\right) \d \mu^{+}(s),$$
    $${\langle u,v\rangle}^-:=\int_{[0, \bar{s}]}\left(\iint_{\mathbb{R}^{2 N}} \frac{C_{N, s}(u(x)-u(y))(v(x)-v(y))}{|x-y|^{N+2s}} \d x \d y\right) \d \mu^{-}(s).$$
    Now, using Cauchy–Schwarz inequality, we get
    \begin{align*}
        (\eta_2(u_1+u_2))^2 =&\langle u_1+u_2, u_1+u_2 \rangle =  (\eta_2(u_1))^2 + (\eta_2(u_2))^2 +2 \langle u_1,u_2 \rangle \\
        \leq & (\eta_2(u_1))^2 + (\eta_2(u_2))^2 +2 |\langle u_1,u_2 \rangle| \\
        \leq & (\eta_2(u_1))^2 + (\eta_2(u_2))^2 +2 \eta_2(u_1)\eta_2(u_2)=(\eta_2(u_1)+\eta_2(u_2))^2.
    \end{align*}
    Therefore, $X_2(\Omega)$ is a real inner product space with inner product $\langle.,.\rangle$, and $\eta_2$ is the induced norm. Thus, for any $\kappa \in [0,\kappa_0]$, $X_2(\Omega)$ reduces to a Hilbert space concerning the inner product $\langle u,v \rangle$, where $\kappa_0$ is sufficiently small. For further properties of the Hilbert space $X_2(\Omega)$ and related problems, we refer to \cite{ABB2025, AGKR2025,  DPSV2025}. 

The next lemma guarantees that $\eta_p$ and $\rho_p$ are equivalent as energy functional on $X_p(\Omega)$ for all $p\in[1,\infty)$.

    \begin{lem}\label{lmn2.8}
        Let $\mu$ satisfies the assumptions \eqref{measure 1}-\eqref{measure 4}. Then there exists a sufficiently small $\kappa_0(N,\Omega,p)$ such that for all $\kappa\in [0,\kappa_0]$, the functional $\eta_p$ as in \eqref{eq2.6} and the norm $\rho_p$ as in \eqref{norm on Xp} on $X_p(\Omega)$ are equivalent for all $p\in[1,\infty)$. In particular, $\eta_p$ is a quasi-norm.
    \end{lem}
\begin{proof}
    Thanks to Lemma \ref{reabsorb} that for a sufficiently small choice of $\kappa$, we get $(1-c_0\kappa)>0$ and 
    \begin{align}\label{eq2.7}
        \left(\int_{[0,1]}[u]_{s, p}^{p} \d \mu^{+}(s)-\int_{[0,\bar{s})}[u]_{s, p}^{p} \d \mu^{-}(s)\right) \geq (1-c_0\kappa) \int_{[0,1]}[u]_{s, p}^{p} \d \mu^{+}(s)\geq 0.
    \end{align}
    The above inequality \eqref{eq2.7} asserts that the functional $\eta_p$ is well-defined. Using \eqref{eq2.7}, we claim that $\eta_p$ satisfy the following conditions on $X_p(\Omega)$. 
    \begin{enumerate}
        \item [$(i)$] $\eta_p(u)\geq 0$ and $\eta_p(u)= 0$ if and only if $u=0$,
        \item [$(ii)$] $\eta_p(\alpha u)=|\alpha| \eta_p(\alpha u),$ $\alpha$ is scalar.
    \end{enumerate} 
    In addition, for every $u\in X_p(\Omega)$, we obtain
    \begin{align}\label{equi}
        (1-c_0k)^\frac{1}{p} \rho_p(u)\leq \eta_p (u) \leq \rho_p(u).
    \end{align}
    Thus, we conclude that $\rho_p$ and $\eta_p$ are equivalent functionals on $X_p(\Omega)$. Since $\rho_p$ is a norm, using \eqref{equi}, we conclude that $\eta_p$ is a quasi-norm. This completes the proof.
\end{proof}

\begin{rem}
Recall that $\eta_p,\, (p\neq2)$ is a norm for $k=0$ or $p=2$. It is noteworthy to mention that it is challenging to prove whether $\eta_p,\, (p\neq2)$ is a norm involving the conditions \eqref{measure 1}-\eqref{measure 4}. However, the equivalence as energy functionals of  $\eta_p$ and $\rho_p$ in $X_p(\Omega)$ gives us advantages for obtaining a priori estimates for the functional $\mathcal{J}_\lambda$. Specifically, Lemma \ref{lmn2.8} helps proving the coercivity of the truncated functional $\mathcal{J}_\lambda$ (Sec 3).
\end{rem}

\section{Existence of infinitely many solutions} \label{sec7}

This section establishes the main result obtaining the fundamental tools related to the symmetric mountain pass theorem for the problem \eqref{problem 2} involving the elliptic operator $A_{\mu, p}$. We first define the notion of a weak solution to the problem \eqref{problem 2}.

\begin{definition}\label{weaksol MP}
    We say $u \in X_{p}(\Omega)$ is a weak solution to the problem \eqref{problem 2} if for all $v \in X_{p}(\Omega)$, we have
\begin{align*}
& \int_{[0,1]}\left(\iint_{\mathbb{R}^{2 N}} \frac{C_{N, s, p}|u(x)-u(y)|^{p-2}(u(x)-u(y))(v(x)-v(y))}{|x-y|^{N+s p}} \d x \d y\right) \d \mu^{+}(s) \\
& =\int_{[0, \bar{s}]}\left(\iint_{\mathbb{R}^{2 N}} \frac{C_{N, s, p}|u(x)-u(y)|^{p-2}(u(x)-u(y))(v(x)-v(y))}{|x-y|^{N+s p}} \d x \d y\right) \d \mu^{-}(s) \\
& \quad+\lambda \int_{\Omega}|u|^{q-2} u v \d x+\int_{\Omega}|u|^{p_{s_\sharp}^{*}-2} u v \d x.
\end{align*}
\end{definition}
The weak solutions to the problem \eqref{problem 2} are the critical points of the energy functional $\mathcal{I}_{\lambda}: X_{p}(\Omega) \rightarrow \mathbb{R}$, which is defined as
\begin{equation}\label{functional MP}
\mathcal{I}_{\lambda}(u):=\frac{1}{p}\left(\rho_{p}(u)\right) ^{p}-\frac{1}{p} \int_{[0, \bar{s}]}[u]_{s, p}^{p} \d \mu ^{-}(s)-\frac{\lambda}{q} \int_{\Omega}|u|^{q} \d x-\frac{1}{p_{s_{\sharp}}^{*}} \int_{\Omega}|u|^{p_{s_\sharp}^{*}} \d x.
\end{equation}
Furthermore, we have
\begin{align*}
\langle \mathcal{I}^{\prime}_{\lambda}(u), v\rangle&= \int_{[0,1]}\left(\iint_{\mathbb{R}^{2 N}}  \frac{C_{N, s, p}|u(x)-u(y)|^{p-2}(u(x)-u(y))(v(x)-v(y))}{|x-y|^{N+s p}} \d x \d y\right) \d \mu^{+}(s) \\
& -\int_{[0, \bar{s}]}\left(\iint_{\mathbb{R}^{2 N}} \frac{C_{N, s, p}|u(x)-u(y)|^{p-2}(u(x)-u(y))(v(x)-v(y))}{|x-y|^{N+s p}} \d x \d y\right) \d \mu^{-}(s) \\
& -\lambda \int_{\Omega}|u|^{q-2}u v \d x- \int_{\Omega}|u|^{p_{s_\sharp}^{*}-2} u v \d x,
\end{align*}
for all $u, v \in X_{p}(\Omega)$.
\begin{rem}
    We mention here that the integral on the LHS in Definition \ref{weaksol MP} has the following explicit form: 
\begin{align*} 
&\int_{[0,1]}\left(\iint_{\mathbb{R}^{2 N}} \frac{C_{N, s, p}|u(x)-u(y)|^{p-2}(u(x)-u(y))(v(x)-v(y))}{|x-y|^{N+s p}} \d x \d y\right) \d \mu^{+}(s)\\
&=\int_{(0,1)}\left(\iint_{\mathbb{R}^{2 N}} \frac{C_{N, s, p}|u(x)-u(y)|^{p-2}(u(x)-u(y))(v(x)-v(y))}{|x-y|^{N+s p}} \d x \d y\right) \d \mu^{+}(s)\\ 
&+\mu^+(\{0\}) \int_\Omega |u(x)|^{p-2}u(x)v(x) \d x + \mu^+(\{1\}) \int_\Omega |\nabla u(x)|^{p-2}\nabla u(x) \cdot \nabla v(x) \d x. \end{align*} 
Similarly, the first term on the RHS is defined but for simplicity, we will continue to misuse the expression as mentioned in Definition \ref{weaksol MP}.
\end{rem}

The next lemma establishes the Palais-Smale $(\mathrm{PS})_c$ condition for the energy functional $\mathcal{I}_{\lambda}$ at level $c$ for the full range of $q\in(1,p^*_{s_{\sharp}})$. Note that it is sufficient to obtain the $\mathrm{PS}$ condition for $1<q<p$ for the problem \eqref{problem 2}. The result for the range $q\in[p, p^*_{s_{\sharp}})$ is of general interest to study similar problems with $p$-superlinear growth. The idea of the proof is analogous to \cite[Proposition 5.10]{DPSV} and \cite[Lemma 7.5]{AGKR2025}. Thanks to the continuous Sobolev embedding $X_{p}(\Omega) \hookrightarrow L^{p_{s_\sharp}^{*}}(\Omega)$ given by \eqref{embedding}, we define the best Sobolev constant as
\begin{equation}\label{eq3.5}
    \mathcal{S}(p):=\inf_{\|u\|_{L^{p_{s_\sharp}^{*}}(\Omega)}=1} \int_{[0,1]}[u]_{s, p}^{p} d \mu^{+}(s).
    \end{equation}
 \begin{lem}\label{PS condition lem}
    Let $\theta_{0} \in(0,1)$ and
\begin{align}\label{eq3.4}
~&(i)~c^{*}:=\frac{s_{\sharp}}{N}\left(\left(1-\theta_{0}\right) \mathcal{S}(p)\right)^{N / s_{\sharp} p}-|\Omega|\left(\frac{s_\sharp}{N}\right)^{-\frac{q}{p^*_{s_{\sharp}}-q}}\left[\lambda \left(\frac{1}{q}-\frac{1}{p}\right)\right]^{\frac{p^*_{s_{\sharp}}}{p^*_{s_{\sharp}}-q}} ~\text{when}~ q\in(1,p), \\
~&(ii)~c^{*}:=\frac{s_{\sharp}}{N}\left(\left(1-\theta_{0}\right) \mathcal{S}(p)\right)^{N / s_{\sharp} p}~\text{when}~ q\in[p,p^*_{s_{\sharp}}),\label{bound for level}
\end{align}
Then, there exists $\kappa_{0}(N, \Omega, p, s_{\sharp},\theta_{0})>0$, such that for any $\kappa \in\left[0, \kappa_{0}\right]$ and $c \in$ $\left(0, c^{*}\right)$, the functional $\mathcal{I}_\lambda$ defined in \eqref{functional MP} satisfies the $\mathrm{PS}$ condition at level $c.$
\end{lem}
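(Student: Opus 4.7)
The plan is to adapt the classical Brezis--Nirenberg concentration-compactness argument to the superposition setting, working throughout with the equivalent norm $\eta_p$ from Lemma \ref{lmn2.8}. Let $(u_n)\subset X_p(\Omega)$ satisfy $\mathcal I_\lambda(u_n)\to c$ and $\mathcal I_\lambda'(u_n)\to 0$ in $X_p(\Omega)^*$. First I would establish boundedness of $(u_n)$: the combination $p^*_{s_\sharp}\mathcal I_\lambda(u_n)-\langle \mathcal I_\lambda'(u_n),u_n\rangle$ collapses, via Lemma \ref{lmn2.8}, to $\tfrac{s_\sharp p}{N}\bigl(\tfrac{1}{p}[\eta_p(u_n)]^p\bigr)$ up to a $\lambda\bigl(\tfrac{1}{q}-\tfrac{1}{p^*_{s_\sharp}}\bigr)\|u_n\|_{L^q}^q$ correction and $o(1)+\|u_n\|\cdot o(1)$; using the continuous embedding of Proposition \ref{compact and cont embedding} and Young's inequality, the correction is absorbed and $(u_n)$ is bounded for every $q\in(1,p^*_{s_\sharp})$.

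Next, by reflexivity (Lemma \ref{Uniform convexity}) and the compact embedding $X_p(\Omega)\hookrightarrow L^r(\Omega)$ for $r<p^*_{s_\sharp}$, along a subsequence $u_n\rightharpoonup u$ in $X_p(\Omega)$, $u_n\to u$ in $L^r(\Omega)$ for every $r\in[1,p^*_{s_\sharp})$, and $u_n\to u$ a.e.\ in $\mathbb R^N$. Combining the pointwise convergence with Lemma \ref{weak convergence} and a dominated-convergence argument for the critical power (where $|u_n|^{p^*_{s_\sharp}-2}u_n$ is bounded in $L^{(p^*_{s_\sharp})'}(\Omega)$), passage to the limit in $\langle\mathcal I_\lambda'(u_n),v\rangle\to 0$ shows that $u$ solves \eqref{problem 2} weakly; in particular $\langle\mathcal I_\lambda'(u),u\rangle=0$. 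Setting $v_n:=u_n-u$ and applying Lemma \ref{B-L lemma} to the $\mu^\pm$-integrated Gagliardo seminorms together with the classical Brezis--Lieb lemma in $L^{p^*_{s_\sharp}}$, I obtain
\begin{equation*}
[\eta_p(u_n)]^p=[\eta_p(u)]^p+[\eta_p(v_n)]^p+o(1),\qquad \|u_n\|_{L^{p^*_{s_\sharp}}}^{p^*_{s_\sharp}}=\|u\|_{L^{p^*_{s_\sharp}}}^{p^*_{s_\sharp}}+\|v_n\|_{L^{p^*_{s_\sharp}}}^{p^*_{s_\sharp}}+o(1).
\end{equation*}
Combined with $\mathcal I_\lambda(u_n)\to c$, $\langle\mathcal I_\lambda'(u_n),u_n\rangle\to 0$, $\langle\mathcal I_\lambda'(u),u\rangle=0$, and strong $L^q$ convergence, this reduces the Palais--Smale data to
\begin{equation*}
\mathcal I_\lambda(u)+\tfrac{1}{p}[\eta_p(v_n)]^p-\tfrac{1}{p^*_{s_\sharp}}\|v_n\|_{L^{p^*_{s_\sharp}}}^{p^*_{s_\sharp}}=c+o(1),\qquad [\eta_p(v_n)]^p-\|v_n\|_{L^{p^*_{s_\sharp}}}^{p^*_{s_\sharp}}=o(1).
\end{equation*}

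Along a further subsequence set $\ell:=\lim_n[\eta_p(v_n)]^p=\lim_n\|v_n\|_{L^{p^*_{s_\sharp}}}^{p^*_{s_\sharp}}\geq 0$. The Sobolev inequality \eqref{eq3.5}, together with the comparison $[\eta_p(v_n)]^p\geq(1-c_0\kappa)[\rho_p(v_n)]^p$ from Lemma \ref{lmn2.8}, gives $(1-c_0\kappa)\mathcal S(p)\|v_n\|_{L^{p^*_{s_\sharp}}}^p\leq[\eta_p(v_n)]^p$. Choosing $\kappa_0>0$ so small that $1-c_0\kappa\geq 1-\theta_0$ for every $\kappa\in[0,\kappa_0]$, the limit forces the dichotomy $\ell=0$ or $\ell\geq((1-\theta_0)\mathcal S(p))^{N/(s_\sharp p)}$. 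If $\ell>0$, then $\lim_n\mathcal I_\lambda(v_n)=\tfrac{s_\sharp}{N}\ell\geq\tfrac{s_\sharp}{N}((1-\theta_0)\mathcal S(p))^{N/(s_\sharp p)}$, so $c\geq\mathcal I_\lambda(u)+\tfrac{s_\sharp}{N}((1-\theta_0)\mathcal S(p))^{N/(s_\sharp p)}$. It remains to bound $\mathcal I_\lambda(u)$ from below using the identity $\mathcal I_\lambda(u)=\mathcal I_\lambda(u)-\tfrac{1}{p}\langle\mathcal I_\lambda'(u),u\rangle=\lambda(\tfrac{1}{p}-\tfrac{1}{q})\|u\|_{L^q}^q+\tfrac{s_\sharp}{N}\|u\|_{L^{p^*_{s_\sharp}}}^{p^*_{s_\sharp}}$. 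For $q\in[p,p^*_{s_\sharp})$ the first term is nonnegative, so $\mathcal I_\lambda(u)\geq 0$ and $c\geq c^*$, a contradiction. For $q\in(1,p)$ the first term is negative; H\"older's inequality $\|u\|_{L^q}^q\leq|\Omega|^{1-q/p^*_{s_\sharp}}\|u\|_{L^{p^*_{s_\sharp}}}^q$ followed by Young's inequality with conjugate exponents $p^*_{s_\sharp}/q$ and $p^*_{s_\sharp}/(p^*_{s_\sharp}-q)$, tuned so that the resulting positive $\|u\|_{L^{p^*_{s_\sharp}}}^{p^*_{s_\sharp}}$ contribution cancels the $\tfrac{s_\sharp}{N}\|u\|_{L^{p^*_{s_\sharp}}}^{p^*_{s_\sharp}}$ term, produces exactly the lower bound $\mathcal I_\lambda(u)\geq-|\Omega|(s_\sharp/N)^{-q/(p^*_{s_\sharp}-q)}[\lambda(1/q-1/p)]^{p^*_{s_\sharp}/(p^*_{s_\sharp}-q)}$, again contradicting $c<c^*$. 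Hence $\ell=0$, and uniform convexity of $X_p(\Omega)$ (Lemma \ref{Uniform convexity}) upgrades $[\eta_p(v_n)]\to 0$ to $u_n\to u$ strongly in $X_p(\Omega)$.

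The main obstacle I anticipate is the joint handling of the Brezis--Lieb decomposition for the $\mu^+$- and $\mu^-$-integrated Gagliardo seminorms so as to obtain a genuinely clean splitting in the equivalent norm $\eta_p$ (with no spurious residual at the threshold $\bar s$), and its synchronization with the precise Young-inequality calibration that produces the exact constant in \eqref{eq3.4}. The delicate balance between the Sobolev threshold $((1-\theta_0)\mathcal S(p))^{N/(s_\sharp p)}$ coming from the critical term and the negative contribution from the $L^q$ term when $q<p$ is precisely what forces the specific form of $c^*$ in the sublinear regime.
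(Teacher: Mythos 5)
Your proposal follows essentially the same route as the paper: Brezis--Lieb decomposition of both the $\mu^\pm$-integrated Gagliardo seminorms and the critical $L^{p^*_{s_\sharp}}$ norm, the Sobolev-threshold dichotomy via the equivalent-norm comparison $(1-c_0\kappa)\rho_p^p\leq\eta_p^p$, and the H\"older--Young calibration to produce the sublinear correction in $c^*$; you merely package the final step as an explicit concentration-compactness dichotomy ($\ell=0$ versus $\ell\geq((1-\theta_0)\mathcal S(p))^{N/(s_\sharp p)}$) whereas the paper threads the same inequalities directly.

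One caveat on the boundedness step: the combination $p^*_{s_\sharp}\mathcal I_\lambda(u_n)-\langle\mathcal I_\lambda'(u_n),u_n\rangle$ eliminates the critical term and leaves a $\|u_n\|_{L^q}^q$ correction, so absorbing it by Young's inequality only works when $q<p$ (in which case $\eta_p^q$ is genuinely lower order than $\eta_p^p$). For $q\in[p,p^*_{s_\sharp})$ your combination leaves $\eta_p^p\lesssim 1+o(1)\eta_p+\eta_p^q$ with $q\geq p$, which does not yield boundedness. The paper instead works with $\mathcal I_\lambda(u_n)-\tfrac{1}{p}\langle\mathcal I_\lambda'(u_n),u_n\rangle$, whose two residual terms $\lambda(\tfrac{1}{p}-\tfrac{1}{q})\|u_n\|_{L^q}^q$ and $\tfrac{s_\sharp}{N}\|u_n\|_{L^{p^*_{s_\sharp}}}^{p^*_{s_\sharp}}$ are both nonnegative when $q\geq p$, bounding $\|u_n\|_{L^{p^*_{s_\sharp}}}$ directly and then $\rho_p(u_n)$ via \eqref{level c} and Lemma \ref{reabsorb}; for $q<p$ one additionally absorbs the $L^q$-term by H\"older against the critical norm. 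Since Theorem \ref{thm1.1} only requires $q<p$ this is harmless for the main result, but the lemma as stated covers $q\in(1,p^*_{s_\sharp})$ and the combination should be adjusted (or the cases split) there. Your coefficients also carry a stray factor of $p^*_{s_\sharp}$ (the combination collapses to $\tfrac{s_\sharp p^*_{s_\sharp}}{N}[\eta_p(u_n)]^p$ rather than $\tfrac{s_\sharp}{N}[\eta_p(u_n)]^p$), though this is cosmetic. Finally, $[\eta_p(v_n)]\to0$ is already strong convergence via the norm equivalence in Lemma \ref{lmn2.8}; no appeal to uniform convexity is needed at that last step.
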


\begin{proof}
    Let $c \in\left(0, c^{*}\right)$ and $(u_{n}) \subset X_{p}(\Omega)$ be a $\mathrm{PS}$ sequence for the functional $\mathcal{I}_{\lambda}$. Therefore, we get 
\begin{align}\label{level c}
\begin{split}
    \lim _{n \rightarrow\infty} \mathcal{I}_{\lambda}\left(u_{n}\right) =&\lim _{n \rightarrow\infty} \Bigg[\frac{1}{p}\left(\rho_{p}\left(u_{n}\right)\right)^{p}-\frac{1}{p} \int_{[0, \bar{s}]}\left[u_{n}\right]_{s, p}^{p} \d \mu^{-}(s)\\
    &-\frac{\lambda}{q} \int_{\Omega}\left|u_{n}\right|^{q} \d x-\frac{1}{p_{s_{\sharp}}^{*}} \int_{\Omega}\left|u_{n}\right|^{p_{s_{\sharp}}^{*}} \d x \Bigg]=c
\end{split}
\end{align}
and
\begin{align}\label{derivative}
 \lim _{n \rightarrow\infty}& \sup_{v \in X_p(\Omega)} \langle \mathcal{I}^{\prime}_{\lambda}\left(u_{n}\right), v\rangle \nonumber \\
=&  \lim _{n \rightarrow\infty} \sup_{v \in X_p(\Omega)} \Bigg[ \int_{[0,1]}\Bigg( \iint_{\mathbb{R}^{2 N}}  \frac{\splitfrac{C_{N, s, p}\left|u_{n}(x)-u_{n}(y)\right|^{p-2}\left(u_{n}(x)-u_{n}(y)\right)}{\times(v(x)-v(y))}}{|x-y|^{N+s p}}\d x \d y\Bigg) \d \mu^{+}(s) \nonumber\\ 
& -\int_{[0, \bar{s}]}\Bigg( \iint_{\mathbb{R}^{2 N}} \frac{\splitfrac{C_{N, s, p}\left|u_{n}(x)-u_{n}(y)\right|^{p-2}\left(u_{n}(x)-u_{n}(y)\right)}{\times(v(x)-v(y))}}{|x-y|^{N+s p}} \d x \d y\Bigg) \d \mu^{-}(s)\nonumber \\
& -\lambda \int_{\Omega}\left|u_{n}\right|^{q-2} u_{n} v \d x-\int_{\Omega}\left|u_{n}\right|^{p_{s_\sharp}^{*}-2} u_{n} v \d x\Bigg]=0. 
\end{align}
Choosing $v := -u_{n}$ in \eqref{derivative}, we derive
\begin{align*}
0  \leq &\lim _{n \rightarrow\infty} \Bigg[\int_{[0,1]}\left(C_{N, s, p} \iint_{\mathbb{R}^{2 N}} \frac{\left|u_{n}(x)-u_{n}(y)\right|^{p}}{|x-y|^{N+s p}} \d x \d y\right) \d \mu^{+}(s) \\
& -\int_{[0, \bar{s}]}\left(C_{N, s, p} \iint_{\mathbb{R}^{2 N}} \frac{\left|u_{n}(x)-u_{n}(y)\right|^{p}}{|x-y|^{N+s p}} \d x \d y\right) \d \mu^{-}(s) \\
& -\lambda \int_{\Omega}\left|u_{n}\right|^{q} \d x-\int_{\Omega}\left|u_{n}\right|^{p_{s_\sharp}^{*}} \d x\Bigg] \\
 =&  \lim _{n \rightarrow\infty} p \mathcal{I}_{\lambda}\left(u_{n}\right)+\lim _{n \rightarrow\infty} \Bigg[\lambda\left(\frac{p}{q}-1\right) \|u_n\|_{L^q(\Omega)}^q + \left( \frac{p}{p_{s_\sharp}^{*}}-1\right)\|u_n\|_{L^{p_{s_\sharp}^{*}}(\Omega)}^{p_{s_\sharp}^{*}} \Bigg].
\end{align*}
Thus, from \eqref{level c}, we get
\begin{equation}
\lim _{n \rightarrow\infty} \Bigg[\lambda\left(\frac{1}{p}-\frac{1}{q}\right) \|u_n\|_{L^q(\Omega)}^q + \left( \frac{1}{p}-\frac{1}{p_{s_\sharp}^{*}}\right)\|u_n\|_{L^{p_{s_\sharp}^{*}}(\Omega)}^{p_{s_\sharp}^{*}}\Bigg] \leq c.
\end{equation}
Also, from Lemma \ref{reabsorb}, we obtain
\begin{equation*}
    \int_{[0, \bar{s}]}\left[u_{n}\right]_{s, p}^{p} \d \mu^{-}(s) \leq c_{0} \kappa \int_{[\bar{s}, 1]}\left[u_{n}\right]_{s, p}^{p} \d \mu^{+}(s) \leq c_{0} \kappa\left(\rho_{p}\left(u_{n}\right)\right)^{p},
\end{equation*}
for some $c_0:=c_0(N, \Omega, p).$ Therefore, we get
\begin{equation}\label{main ineq in proof}
\left(\rho_{p}\left(u_{n}\right)\right)^{p}-\int_{[0, \bar{s}]}\left[u_{n}\right]_{s, p}^{p} \d \mu^{-}(s) \geq\left(1-c_0 \kappa\right)\left(\rho_{p}\left(u_{n}\right)\right)^{p}.
\end{equation}
Now, combining \eqref{main ineq in proof} and \eqref{level c}, we conclude that $\rho_{p}\left(u_{n}\right)$ is uniformly bounded in $n$, provided $1 - c_0 \kappa > 0$.  Thus, from Lemma \ref{Uniform convexity} and Proposition \ref{compact and cont embedding}, there exists $u \in X_{p}(\Omega)$ and a subsequence of $(u_n)$, still denoted by $(u_n)$ such that
\begin{align}\label{convergences}
\begin{split}
    & u_{n} \rightharpoonup u \text { in } X_{p}(\Omega) \\
& u_{n} \rightarrow u \text { in } L^{r}(\Omega) \text { for any } r \in [1, p_{s_{\sharp}}^{*}),  \\
& u_{n} \rightarrow u \text { a.e. in } \Omega .
\end{split}
\end{align}
We aim to prove that $u_{n} \rightarrow u$ strongly in $X_{p}(\Omega)$. Let us denote $\widetilde{u}_{n}:=u_{n}-u$. Now, using \cite[Theorem 1]{BL: 1983}, we have
\begin{equation}\label{ex eq 0}
    \|u\|_{L^{p_{s_\sharp}^{*}}(\Omega)}^{p_{s_\sharp}^{*}}=\lim _{n \rightarrow\infty}\left(\|u_n\|_{L^{p_{s_\sharp}^{*}}(\Omega)}^{p_{s_\sharp}^{*}}-\|\widetilde{u}_n\|_{L^{p_{s_\sharp}^{*}}(\Omega)}^{p_{s_\sharp}^{*}}\right).
\end{equation}
In addition, Lemma \ref{B-L lemma} implies that
\begin{equation}\label{ex eq 1}
\int_{[0,1]}[u]_{s, p}^{p} \d \mu^{ \pm}(s)=\lim _{n \rightarrow\infty} \left(\int_{[0,1]}\left[u_{n}\right]_{s, p}^{p} \d \mu^{ \pm}(s)-\int_{[0,1]}\left[\widetilde{u}_{n}\right]_{s, p}^{p} \d \mu^{ \pm}(s)\right).
\end{equation}
Now, the definition \ref{weaksol MP} with $v:=u$ yields
\begin{equation}\label{v=u}
\left(\rho_{p}(u)\right)^{p}-\int_{[0, \bar{s}]}[u]_{s, p}^{p} \d \mu^{-}(s)=\lambda\|u\|_{L^q(\Omega)}^{q}+\|u\|_{L^{p_{s_\sharp}^{*}}(\Omega)}^{p_{s_\sharp}^{*}}.
\end{equation}
Again, substituting $v:= \pm u_{n}$ into \eqref{derivative}, we get
\begin{align}\label{new sg}
     \lim _{n \rightarrow\infty} &\langle \mathcal{I}^{\prime}_{\lambda}\left(u_{n}\right), u_n\rangle \nonumber\\
     &=\lim _{n \rightarrow\infty} \left(\left(\rho_{p}(u_n)\right)^{p}-\int_{[0, \bar{s}]}[u_n]_{s, p}^{p} \d \mu^{-}(s)-\lambda\|u_n\|_{L^q(\Omega)}^{q}-\|u_n\|_{L^{p_{s_\sharp}^{*}}(\Omega)}^{p_{s_\sharp}^{*}}\right)=0.
\end{align}
From \eqref{new sg} and \eqref{convergences}, we get
\begin{equation}\label{ex eq 2}
\lim _{n \rightarrow\infty} \left(\left(\rho_{p}(u_n)\right)^{p}-\int_{[0, \bar{s}]}[u_n]_{s, p}^{p} \d \mu^{-}(s)-\|u_n\|_{L^{p_{s_\sharp}^{*}}(\Omega)}^{p_{s_\sharp}^{*}}\right)=\lambda\|u\|_{L^q(\Omega)}^{q}.
\end{equation}
Consequently, from \eqref{v=u}, we conclude that
\begin{align}\label{ex eq 00}
\begin{split}
    \lim _{n \rightarrow\infty} &\left(\left(\rho_{p}(u_n)\right)^{p}-\int_{[0, \bar{s}]}[u_n]_{s, p}^{p} \d \mu^{-}(s)-\|u_n\|_{L^{p_{s_\sharp}^{*}}(\Omega)}^{p_{s_\sharp}^{*}}\right)\\
&=\left(\rho_{p}(u)\right)^{p}-\int_{[0, \bar{s}]}[u]_{s, p}^{p} \d \mu^{-}(s)-\|u\|_{L^{p_{s_\sharp}^{*}}(\Omega)}^{p_{s_\sharp}^{*}}.
\end{split}
\end{align}
Thus, combining \eqref{ex eq 0},\eqref{ex eq 1} and \eqref{ex eq 00}, we obtain
\begin{equation}\label{eq3.15}
\lim _{n \rightarrow\infty} \left(\left(\rho_{p}(\widetilde{u}_n)\right)^{p}-\int_{[0, \bar{s}]}[\widetilde{u}_n]_{s, p}^{p} \d \mu^{-}(s)-\|\widetilde{u}_n\|_{L^{p_{s_\sharp}^{*}}(\Omega)}^{p_{s_\sharp}^{*}}\right)=0.
\end{equation}
Injecting the Sobolev constant as in \eqref{eq3.5} into \eqref{eq3.15}, we get
\begin{equation*}
    \lim _{n \rightarrow\infty}\left[\left(\rho_{p}\left(\widetilde{u}_{n}\right)\right)^{p}-\int_{[0, \bar{s}]}\left[\widetilde{u}_{n}\right]_{s, p}^{p} \d \mu^{-}(s)-\left(\frac{1}{\mathcal{S}(p)} \int_{[0,1]}\left[\widetilde{u}_{n}\right]_{s, p}^{p} \d \mu^{+}(s)\right)^{p_{s_\sharp}^{*} / p} \right]\leq 0,
\end{equation*}
which implies
\begin{equation*}
  \lim _{n \rightarrow\infty}\left[\left(\rho_{p}\left(\widetilde{u}_{n}\right)\right)^{p}-\int_{[0, \bar{s}]}\left[\widetilde{u}_{n}\right]_{s, p}^{p} \d \mu^{-}(s)-\frac{\left(\rho_{p}\left(\widetilde{u}_{n}\right)\right)^{p_{s_\sharp}^{*}}}{(\mathcal{S}(p))^{p_{s_\sharp}^{*} / p}}\right] \leq 0.  
\end{equation*}
Again, putting $\widetilde{u}_{n}$ instead of $u_{n}$ in \eqref{main ineq in proof}, we obtain
\begin{align}\label{main ineq 2}
    &\lim _{n \rightarrow\infty}\left[\left(1-c_0 \kappa\right)\left(\rho_{p}\left(\widetilde{u}_{n}\right)\right)^{p}-\frac{\left(\rho_{p}\left(\widetilde{u}_{n}\right)\right)^{p_{s_\sharp}^{*}}}{(\mathcal{S}(p))^{p_{s_\sharp}^{*} / p}}\right] \leq 0,\nonumber\\
    \Rightarrow&\lim _{n \rightarrow\infty}\left(\rho_{p}\left(\widetilde{u}_{n}\right)\right)^{p}\left[\left(1-c_0 \kappa\right)(\mathcal{S}(p))^{p_{s_\sharp}^{*} / p}-\left(\rho_{p}\left(\widetilde{u}_{n}\right)\right)^{p_{s_{\sharp}}^{*}-p}\right] \leq 0. 
\end{align}
Therefore, to obtain the strong convergence, it is sufficient to prove that
$$\left(1-c_0 \kappa\right)(\mathcal{S}(p))^{p_{s_\sharp}^{*} / p}-\left(\rho_{p}\left(\widetilde{u}_{n}\right)\right)^{p_{s_{\sharp}}^{*}-p}> 0.$$
We divide the proof into two cases: $q\in(1,p)$ and $q\in[p,p_{s_\sharp}^{*})$. Recall that  
\begin{align}\label{eq3.16}
    \mathcal{I}_{\lambda}\left(u_{n}\right)-\frac{1}{p} \langle \mathcal{I}'_{\lambda}\left(u_{n}\right), u_n \rangle = \left(\frac{1}{p}-\frac{1}{p^*_{s_\sharp}}\right) \|u_n\|_{L^{p_{s_\sharp}^{*}}(\Omega)}^{p_{s_\sharp}^{*}}-\lambda \left(\frac{1}{q}-\frac{1}{p}\right)\|u_n\|_{L^q(\Omega)}^q.
\end{align}

\noindent \textbf{Case (i) (When $1< q< p$):} From \eqref{convergences}, \eqref{ex eq 0} and \eqref{eq3.16}, we get
\begin{align}\label{eq3.22}
    c= \lim _{n \rightarrow\infty} \left(\frac{1}{p}-\frac{1}{p^*_{s_\sharp}}\right) \left(\|u\|_{L^{p_{s_\sharp}^{*}}(\Omega)}^{p_{s_\sharp}^{*}}+\|\widetilde{u}_n\|_{L^{p_{s_\sharp}^{*}}(\Omega)}^{p_{s_\sharp}^{*}}\right)-\lambda \left(\frac{1}{q}-\frac{1}{p}\right)\|u\|_{L^q(\Omega)}^q.
\end{align}
Now, using \eqref{eq3.15} in \eqref{eq3.22} and applying H\"older's and Young's inequalities, we obtain
\begin{align}\label{eq3.023}
    c \geq & \lim _{n \rightarrow\infty} \left(\frac{1}{p}-\frac{1}{p^*_{s_\sharp}}\right) \left(\|u\|_{L^{p_{s_\sharp}^{*}}(\Omega)}^{p_{s_\sharp}^{*}}+\|\widetilde{u}_n\|_{L^{p_{s_\sharp}^{*}}(\Omega)}^{p_{s_\sharp}^{*}}\right)-\lambda \left(\frac{1}{q}-\frac{1}{p}\right)|\Omega|^\frac{p^*_{s_\sharp}-q}{p^*_{s_\sharp}}\|u\|^q_{L^{p^*_{s_\sharp}}(\Omega)} \nonumber \\
    \geq &\lim _{n \rightarrow\infty} \left(\frac{1}{p}-\frac{1}{p^*_{s_\sharp}}\right) \left(\|u\|_{L^{p_{s_\sharp}^{*}}(\Omega)}^{p_{s_\sharp}^{*}}+\left(\rho_{p}(\widetilde{u}_n)\right)^{p}-\int_{[0, \bar{s}]}[\widetilde{u}_n]_{s, p}^{p} d \mu^{-}(s)\right)\nonumber\\
    &-\left(\frac{1}{p}-\frac{1}{p^*_{s_\sharp}}\right) \|u\|_{L^{p_{s_\sharp}^{*}}(\Omega)}^{p_{s_\sharp}^{*}} -|\Omega|\left(\frac{1}{p}-\frac{1}{p^*_{s_\sharp}}\right)^{-\frac{q}{p^*_{s_{\sharp}}-q}}\left[\lambda \left(\frac{1}{q}-\frac{1}{p}\right)\right]^{\frac{p^*_{s_{\sharp}}}{p^*_{s_{\sharp}}-q}} \nonumber \\
    \geq & \lim _{n \rightarrow\infty} \left(\frac{1}{p}-\frac{1}{p^*_{s_\sharp}}\right) \left(1-c_0 \kappa\right)\left(\rho_{p}\left(\widetilde{u}_{n}\right)\right)^{p} \nonumber\\
    &\quad-|\Omega|\left(\frac{1}{p}-\frac{1}{p^*_{s_\sharp}}\right)^{-\frac{q}{p^*_{s_{\sharp}}-q}}\left[\lambda \left(\frac{1}{q}-\frac{1}{p}\right)\right]^{\frac{p^*_{s_{\sharp}}}{p^*_{s_{\sharp}}-q}}.
\end{align}
Therefore, using \eqref{eq3.4} and \eqref{eq3.023}, we get
\begin{align}\label{eq3.24}
    \left(\left(1-\theta_{0}\right) \mathcal{S}(p)\right)^{N / s_{\sharp} p}> \lim _{n \rightarrow\infty}\left(1-c_0 \kappa\right)\left(\rho_{p}\left(\widetilde{u}_{n}\right)\right)^{p}.
\end{align}
Consequently, we have
\begin{align*}
& \liminf _{n \rightarrow\infty}\left(\left(1-c_0 \kappa\right)(\mathcal{S}(p))^{p_{s_{\sharp}}^{*} / p}-\left(\rho_{p}\left(\widetilde{u}_{n}\right)\right)^{p_{s_{\sharp}}^{*}-p}\right) \\
& \quad=\left(1-c_0 \kappa\right)(\mathcal{S}(p))^{N /\left(N-s_{\sharp} p\right)}-\limsup _{n \rightarrow\infty}\left(\rho_{p}\left(\widetilde{u}_{n}\right)\right)^{s_{\sharp} p^{2} /\left(N-s_{\sharp} p\right)} \\
& \quad \geq\left(1-c_0 \kappa\right)(\mathcal{S}(p))^{N /\left(N-s_{\sharp} p\right)}-\left(\frac{\left(\left(1-\theta_{0}\right) \mathcal{S}(p)\right)^{N / s_{\sharp} p}}{1-c_0 \kappa}\right)^{s_{\sharp} p /\left(N-s_{\sharp} p\right)} \\
& \quad=\left[1-c_0 \kappa-\left(\frac{\left(1-\theta_{0}\right)^{N / s_{\sharp} p}}{1-c_0 \kappa}\right)^{s_{\sharp} p /\left(N-s_{\sharp} p\right)}\right](\mathcal{S}(p))^{N /\left(N-s_{\sharp} p\right)}.
\end{align*}
Since,
\begin{align*}
   & \lim _{\kappa \rightarrow 0} \left[1-c_0 \kappa-\left(\frac{\left(1-\theta_{0}\right)^{N / s_{\sharp} p}}{1-c_0 \kappa}\right)^{s_{\sharp} p /\left(N-s_{\sharp} p\right)}\right] \\&\quad\quad\quad=1-\left(1-\theta_{0}\right)^{N /\left(N-s_{\sharp} p\right)}>0,
\end{align*}
we obtain that
\begin{equation*}
    \liminf _{n \rightarrow\infty}\left(\left(1-c_0 \kappa\right)(\mathcal{S}(p))^{p_{s_{\sharp}}^{*} / p}-\left(\rho_{p}\left(\widetilde{u}_{n}\right)\right)^{p_{s_{\sharp}^{*}}^{*}-p}\right)>0,
\end{equation*}
for sufficiently small $\kappa$ such that $c_0\kappa<1.$ Therefore, from \eqref{main ineq 2}, we deduce that 
\begin{equation*}
    \lim _{n \rightarrow\infty} \rho_{p}\left(\widetilde{u}_{n}\right)=0.
\end{equation*}
Therefore, $u_{n} \rightarrow u$ strongly in $X_{p}(\Omega)$ as $n \rightarrow\infty$.\\

\noindent \textbf{Case (ii) (When $p\leq q< p^*_s$):} Using \eqref{ex eq 0} and \eqref{eq3.16}, we get
\begin{align}\label{eq3.17}
    c\geq  \lim _{n \rightarrow\infty} \left(\frac{1}{p}-\frac{1}{p^*_{s_\sharp}}\right) \left(\|u\|_{L^{p_{s_\sharp}^{*}}(\Omega)}^{p_{s_\sharp}^{*}}+\|\widetilde{u}_n\|_{L^{p_{s_\sharp}^{*}}(\Omega)}^{p_{s_\sharp}^{*}}\right).
\end{align}
Now, from \eqref{eq3.15} and \eqref{eq3.17}, we obtain
\begin{align}\label{eq3.19}
    c\geq  \lim _{n \rightarrow\infty} \left(\frac{1}{p}-\frac{1}{p^*_{s_\sharp}}\right) \left(\left(\rho_{p}(\widetilde{u}_n)\right)^{p}-\int_{[0, \bar{s}]}[\widetilde{u}_n]_{s, p}^{p} d \mu^{-}(s)+\|u\|_{L^{p_{s_\sharp}^{*}}(\Omega)}^{p_{s_\sharp}^{*}}\right).
\end{align}
Applying \eqref{main ineq in proof} to $\widetilde{u}_{n}$ instead of $u_n$ and using \eqref{eq3.19}, we conclude that
\begin{align}\label{eq3.20}
\frac{cN}{s_{\sharp}}\geq \lim _{n \rightarrow\infty}\left(1-c_0 \kappa\right)\left(\rho_{p}\left(\widetilde{u}_{n}\right)\right)^{p} .
\end{align}
Furthermore, using \eqref{bound for level} and \eqref{eq3.20}, we get
\begin{equation}\label{eq3.21}
    \left(\left(1-\theta_{0}\right) \mathcal{S}(p)\right)^{N / s_{\sharp} p}=\frac{c^{*} N}{s_{\sharp}}>\frac{c N}{s_{\sharp}} \geq \lim _{n \rightarrow\infty}\left(1-c_0 \kappa\right)\left(\rho_{p}\left(\widetilde{u}_{n}\right)\right)^{p}.
\end{equation}
Proceeding with similar arguments as in Case (i), we conclude the strong convergence $u_n\rightarrow u$ in $X_p(\Omega)$. This completes the proof.
\end{proof}

Before proceeding to prove Theorem \ref{thm1.1}, we recall the notion of Krasnoselskii's genus and some important properties of genus \cite{R1986}. Let $X$ be a Banach space and $\Gamma$ be the collection of closed, symmetric subsets such that $0\notin A$, i.e.,
$$\Gamma=\{A\subset X: A~\text{is closed, symmetric and}~0\notin A\}.$$ 
The genus of $E\in\Gamma$ is the smallest positive integer $k$ such that there exists an odd and continuous map $h:E\rightarrow \mathbb{R}^k\setminus {0}$ and it is denoted by $\gamma (E)$. Moreover, if there is no such map, we say $\gamma(E)=\infty$. In particular, $\gamma(\emptyset)=0$.
\begin{prop}\label{prop3.3}\cite{R1986}
    Let $A$ and $B\in \Gamma$. Then we have
    \begin{itemize}
    \item[(i)] If $A\subset B$, then $\gamma(A)\leq \gamma(B).$
    \item[(ii)] If there exists an odd homeomorphism from $A$ onto $B$, then $\gamma(A)= \gamma(B).$
    \item[(iii)] $\gamma(A\cup B)\leq \gamma(A)+\gamma(B)$.
    \item [(iv)] $\gamma(\overline{A\setminus B})\geq \gamma(A)- \gamma(B)$ when $\gamma(B)<\infty$.
    \item [(v)] Genus of $k$ dimensional sphere $\mathbb{S}^{k-1}$ is $k$ by the Borsuk-Ulam theorem (see \cite{R1986,M1989}).
    \item [(vi)] If $A$ is compact then $\gamma(A)<\infty$.  Hence, there exists $\delta>0$ and a closed and symmetric neighbourhood  $N_\delta(A)=\{x\in X: \dist(x,A)\leq \delta \}$ of $A$ such that $\gamma(A)=\gamma(N_\delta(A))$.
    \item[(vii)] Let $Z$ be a subspace of $X$ with codimension $k$ and $\gamma(A)>k$, then $A \cap Z \neq \emptyset$.
    \end{itemize}
\end{prop}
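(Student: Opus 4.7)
The plan is to prove each of the seven properties directly from the definition of genus, using as ingredients (a) the fact that pre-composition and extension of odd continuous maps preserves oddness, (b) the Tietze extension theorem together with an antisymmetrization trick to produce odd continuous extensions, (c) the Borsuk--Ulam theorem for the sphere, and (d) basic quotient space arguments for (vii). All of these are standard but I will organize them so that later items reuse earlier ones, which is the efficient way to handle a list like this.

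For (i), suppose $\gamma(B)=k$ and let $h:B\to\mathbb{R}^{k}\setminus\{0\}$ be odd and continuous; the restriction $h|_{A}$ witnesses $\gamma(A)\leq k$. For (ii), an odd homeomorphism $\varphi:A\to B$ lets us transport any odd continuous $h:B\to\mathbb{R}^{k}\setminus\{0\}$ to $h\circ\varphi:A\to\mathbb{R}^{k}\setminus\{0\}$ and vice versa via $\varphi^{-1}$, giving both inequalities. For (iii), if $\gamma(A)=k$ and $\gamma(B)=m$ with witnessing odd maps $f:A\to\mathbb{R}^{k}\setminus\{0\}$ and $g:B\to\mathbb{R}^{m}\setminus\{0\}$, I would extend $f$ and $g$ to continuous maps $\widetilde f,\widetilde g:X\to\mathbb{R}^{k},\mathbb{R}^{m}$ by Tietze and then make them odd by replacing them with $\frac{1}{2}(\widetilde f(x)-\widetilde f(-x))$ and $\frac{1}{2}(\widetilde g(x)-\widetilde g(-x))$ (using that $A,B$ are symmetric so the extensions still agree with $f,g$ on their domains). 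The map $(\widetilde f,\widetilde g):A\cup B\to\mathbb{R}^{k+m}$ is odd, continuous, and nonzero on $A\cup B$ (because one of the two blocks is nonzero on each piece), so $\gamma(A\cup B)\leq k+m$.

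Property (iv) is an immediate corollary of (i) and (iii): writing $A\subset\overline{A\setminus B}\cup B$ and applying subadditivity gives $\gamma(A)\leq\gamma(\overline{A\setminus B})+\gamma(B)$, which rearranges to the claim since $\gamma(B)<\infty$. For (v), the identity map $\mathbb{S}^{k-1}\hookrightarrow\mathbb{R}^{k}\setminus\{0\}$ shows $\gamma(\mathbb{S}^{k-1})\leq k$; for the reverse inequality, any odd continuous map $\mathbb{S}^{k-1}\to\mathbb{R}^{j}\setminus\{0\}$ with $j<k$ would contradict the Borsuk--Ulam theorem, so no such map exists and hence $\gamma(\mathbb{S}^{k-1})\geq k$.

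For (vi), if $A$ is compact and $0\notin A$, I cover $A$ by finitely many symmetric open sets, each contained in an open half-space defined by some continuous linear functional $\ell_{i}$ not vanishing on the piece; the map $x\mapsto(\ell_{1}(x),\ldots,\ell_{n}(x))-(\ell_{1}(-x),\ldots,\ell_{n}(-x))$ (or, cleaner, a partition-of-unity construction using these $\ell_{i}$) yields an odd continuous map into some $\mathbb{R}^{n}\setminus\{0\}$, giving $\gamma(A)<\infty$. For the neighborhood statement, take an odd continuous $h:A\to\mathbb{R}^{k}\setminus\{0\}$ realizing $\gamma(A)=k$, Tietze-extend and antisymmetrize it to an odd continuous $\widetilde h:X\to\mathbb{R}^{k}$, and observe that $U:=\{x\in X:\widetilde h(x)\neq 0\}$ is open, symmetric, and contains $A$; compactness of $A$ then yields $\delta>0$ with $N_{\delta}(A)\subset U$, and $\widetilde h|_{N_{\delta}(A)}$ witnesses $\gamma(N_{\delta}(A))\leq k$, while the reverse inequality follows from (i). Finally, for (vii), write $X=Y\oplus Z$ with $\dim Y=k$ and let $P:X\to Y\cong\mathbb{R}^{k}$ be the (continuous, linear, hence odd) projection along $Z$; if $A\cap Z=\emptyset$, then $P|_{A}:A\to\mathbb{R}^{k}\setminus\{0\}$ is odd and continuous, forcing $\gamma(A)\leq k$, contrary to hypothesis. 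The main (mild) obstacle across the argument is (iii) and the second half of (vi), both of which hinge on producing an odd continuous extension; once the Tietze-plus-antisymmetrization device is in place, the rest of the proposition is routine bookkeeping.
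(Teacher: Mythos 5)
The paper does not prove this proposition at all: it is quoted as a standard package of genus properties from Rabinowitz \cite{R1986} (see also \cite{M1989}), so there is no in-paper argument to compare against. Your blind proof supplies essentially the canonical arguments from that reference, and it is correct: restriction and transport by an odd homeomorphism for (i)--(ii), Tietze extension followed by antisymmetrization for (iii) and for the neighbourhood half of (vi) (and your check that the antisymmetrized extension still agrees with $f$ on the symmetric set $A$ is exactly the point that makes this work), the inclusion $A\subset\overline{A\setminus B}\cup B$ combined with (i) and (iii) for (iv), Borsuk--Ulam for (v), and projection along a topological complement for (vii).

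Two small points of hygiene, neither of which is a genuine gap. First, in the finiteness part of (vi), a \emph{symmetric} set cannot be contained in an open half-space: if $\ell>0$ on $U$ and $U=-U$, then $\ell(-x)=-\ell(x)<0$ for $x\in U$, a contradiction. What you actually want is a finite cover of the compact set $A$ by symmetric pieces $B_r(x_i)\cup B_r(-x_i)$ with $r<\|x_i\|/2$, on each of which a Hahn--Banach functional $\ell_i$ (positive near $x_i$, hence negative near $-x_i$) is nonvanishing; the linear, hence odd, map $x\mapsto(\ell_1(x),\ldots,\ell_n(x))$ then misses the origin on $A$, or one simply applies (iii) to the finitely many genus-one pieces. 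Your parenthetical ``antisymmetrized'' formula already reduces to $2(\ell_1(x),\ldots,\ell_n(x))$, so the construction is the right one; only the description of the cover is off. Second, in the neighbourhood statement of (vi) you should record that $\delta$ may be chosen so small that $0\notin N_\delta(A)$ (possible since $\dist(0,A)>0$ by compactness), so that $N_\delta(A)\in\Gamma$ and $\gamma(N_\delta(A))$ is defined; with that, your sandwich $\gamma(A)\le\gamma(N_\delta(A))\le k=\gamma(A)$ closes the argument.
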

 We now recall the following deformation lemma \cite[Lemma 1.3]{AR: 1973}. Let $I\in C^1(X,\mathbb{R})$. For any $c,d\in \mathbb{R}$, we denote
$$K_c=\{u\in X: I'(u)=0~\text{and}~I(u)=c\}~\text{and}~I^d=\{u\in X:I(u)\leq d\}.$$
\begin{lem}\label{lmn3.3}
Let $X$ be an infinite dimensional Banach space and let $I\in C^1(X,\mathbb{R})$ be a functional satisfying the $(\mathrm{PS})_c$ condition at level $c\in \mathbb{R}$. Let $U$ be any neighbourhood of $K_c$. Then there exist $\beta_t(x)=\beta(t,x)\in C([0,1] \times X, X )$ and constants $d_1, \epsilon>0$ with $|c|>d_1$ such that 
\begin{itemize}
    \item [(i)] $\beta_0= Id_X$, where $Id_X$ is the identity mapping in $X$. 
    \item [(ii)] $\beta_t(u)=u$ for any $u\notin I^{-1}([c-\epsilon,c+\epsilon])$ and $t\in [0,1]$.
    \item [(iii)] For any $t\in [0,1]$, $\beta_t$ is a homeomorphism.
    \item [(iv)] $I(\beta_t(u))\leq I(u)$ for all $u\in X$ and $t\in [0,1]$. 
    \item[(v)] $\beta_1(I^{c+d_1}\setminus  U)\subseteq I^{c-d_1}$.
    \item[(vi)] If $K_c=\emptyset$, then $\beta_1(I^{c+d_1})\subseteq I^{c-d_1}$.
    \item [(vii)] If $I$ is even, then $\beta_t$ is odd for any $t\in [0,1]$.
\end{itemize}
\end{lem}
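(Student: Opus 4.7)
The plan is to follow the classical pseudo-gradient flow construction of Rabinowitz~\cite{R1986}, adapted to a general Banach space $X$. The four essential ingredients are: compactness of the critical set $K_c$ via the $(\mathrm{PS})_c$ condition, a uniform lower bound on $\|I'\|$ off $K_c$ in a narrow strip around the level $c$, the construction of an odd locally Lipschitz pseudo-gradient vector field, and a suitably truncated negative gradient flow.

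First, I would observe that $K_c$ is compact: any sequence in $K_c$ is automatically a Palais--Smale sequence at level $c$, hence, by hypothesis, admits a subsequence converging to a point still in $K_c$. Since $K_c\subset U$ is compact, I can fix open symmetric neighbourhoods $U_0\subset \overline{U_0}\subset U$ of $K_c$. A second application of $(\mathrm{PS})_c$ then yields constants $\sigma>0$ and $\epsilon_0>0$ such that $\|I'(u)\|\geq \sigma$ for every $u\in I^{-1}([c-\epsilon_0,c+\epsilon_0])\setminus U_0$: otherwise, by picking $\epsilon_0=\sigma=1/n$, I could extract a Palais--Smale sequence at level $c$ lying in the closed set $X\setminus U_0$, whose limit, by $(\mathrm{PS})_c$, would be a point of $K_c\cap (X\setminus U_0)$, contradicting $K_c\subset U_0$.

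Next, a standard partition-of-unity argument on the open set $\{I'\neq 0\}$ produces a locally Lipschitz pseudo-gradient $V$ with $\|V(u)\|\leq 2\|I'(u)\|$ and $\langle I'(u),V(u)\rangle\geq \|I'(u)\|^{2}$; the symmetrisation $u\mapsto \tfrac12(V(u)-V(-u))$ renders it odd whenever $I$ is even, which ultimately gives property (vii). Choosing Lipschitz cut-offs $\psi_1:X\to[0,1]$ vanishing on $U_0$ and equal to $1$ outside $U$, together with $\psi_2:\mathbb{R}\to[0,1]$ supported in $[c-\epsilon_0,c+\epsilon_0]$ and satisfying $\psi_2\equiv 1$ on $[c-\epsilon,c+\epsilon]$ for some small $\epsilon\in(0,\epsilon_0)$, I define the truncated field
$$W(u) := -\,\psi_1(u)\,\psi_2(I(u))\,\frac{V(u)}{1+\|V(u)\|},$$
which is globally defined, locally Lipschitz, bounded in norm by $1$, odd when $I$ is even, and identically zero outside $I^{-1}([c-\epsilon_0,c+\epsilon_0])$.

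Finally, I let $\beta_t(u)=\eta(t,u)$ be the flow of $W$, i.e.\ the unique solution of $\eta'(t)=W(\eta(t))$ with $\eta(0)=u$; global existence on $[0,1]$ is automatic because $\|W\|\leq 1$. Properties (i)--(iv) and (vii) follow by inspection from the construction and from the differential identity $\tfrac{d}{dt}I(\eta(t))=-\psi_1\psi_2\|I'(\eta(t))\|^{2}/(1+\|V(\eta(t))\|)\leq 0$. The main obstacle is (v), which contains (vi) as the special case $U=\emptyset$: I must choose $d_1>0$ small enough that, for every $u\in I^{c+d_1}\setminus U$, the trajectory $\eta(\cdot,u)$ either reaches $I^{c-d_1}$ before time $1$ or remains on the whole interval $[0,1]$ inside the region $\{\psi_1\psi_2\equiv 1\}$; in the latter case the lower bound $\|I'\|\geq\sigma$ combined with the monotonicity yields $I(\beta_1(u))\leq c+d_1-\sigma^{2}/(1+2\sigma)\leq c-d_1$. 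The quantitative balancing of $d_1,\epsilon$ and of the separation between $\overline{U_0}$ and $X\setminus U$ is the only delicate point and is carried out in detail in~\cite[Lemma 1.3]{AR: 1973}.
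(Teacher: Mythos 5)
The paper does not prove this lemma; it simply recalls it from Ambrosetti--Rabinowitz \cite[Lemma 1.3]{AR: 1973}, so there is no in-paper argument to compare against. Your sketch is a faithful outline of the classical pseudo-gradient deformation proof underlying that citation --- compactness of $K_c$ from $(\mathrm{PS})_c$, the uniform lower bound $\|I'\|\ge\sigma$ on a level strip minus $U_0$, the odd truncated pseudo-gradient flow, and the monotonicity estimate --- which is precisely the machinery the paper is implicitly relying on, so the routes coincide.

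The one place where the sketch stops short of a proof is the crux you yourself flag. The asserted dichotomy --- that a trajectory starting in $I^{c+d_1}\setminus U$ either reaches $I^{c-d_1}$ by time $1$ or stays on all of $[0,1]$ inside $\{\psi_1\psi_2\equiv 1\}$ --- is not automatic: a priori the flow could drift into the transition region $U\setminus U_0$ (where $0<\psi_1<1$) or into $U_0$ before the energy has dropped by $2d_1$, and then your lower bound on the dissipation would no longer apply. The standard remedy, as in \cite[Lemma 1.3]{AR: 1973} and Rabinowitz's appendix, is to introduce three nested neighbourhoods of the compact set $K_c$ with positive mutual separation and exploit the speed bound $\|W\|\le 1$: travelling from the outermost shell to the innermost one takes time at least that separation, during which the energy decreases at the rate you computed, and $d_1$ is then chosen small relative both to $\sigma^2/(1+2\sigma)$ and to this minimal travel time. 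You defer this balancing to the cited reference, which is consistent with the paper's own treatment (it cites rather than proves), but a self-contained proof would have to carry this step out. Two minor remarks: the stipulation $|c|>d_1$ is harmless and obtained by shrinking $d_1$; and for property (vii) when $U$ is not itself symmetric, first replace $U$ by the symmetric neighbourhood $U\cap(-U)$ of $K_c$ before building the cutoffs --- shrinking $U$ only strengthens (v).
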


\par Observe that due to the presence of the critical term, $\mathcal{I}_\lambda$ is neither coercive nor bounded from below. Therefore,  we use a truncation argument developed in \cite{AP1991} to tackle the issue posed by the critical term. Using the Sobolev constant \eqref{eq3.5}, Proposition \ref{compact and cont embedding} and Lemma \ref{lmn2.8}, we get for all $u\in X_p(\Omega)$,
\begin{align}
\mathcal{I}_{\lambda}\left(u\right)\geq & \frac{1}{p}\left(\rho_{p}\left(u\right)\right)^p-\frac{1}{p} \int_{[0, \bar{s}]}\left[u\right]_{s, p}^{p} d \mu^{-}(s)-\lambda \frac{C}{q}\left(\rho_{p}\left(u\right)\right)^q -{\frac{\mathcal{S}(p)^{-\frac{p^*_{s_\sharp}}{p}}}{p^*_{s_\sharp}}} \left(\rho_{p}\left(u\right)\right)^{p^*_{s_\sharp}} \nonumber\\
= &\frac{1}{p}\left\{\left(\rho_{p}\left(u\right)\right)^p-\int_{[0, \bar{s}]}\left[u\right]_{s, p}^{p} d \mu^{-}(s)\right\}-\lambda \frac{C}{q}\left(\rho_{p}\left(u\right)\right)^q -{\frac{\mathcal{S}(p)^{-\frac{p^*_{s_\sharp}}{p}}}{p^*_{s_\sharp}}}\left(\rho_{p}\left(u\right)\right)^{p^*_{s_\sharp}}, \nonumber \\
\geq &\frac{1}{p}(1-c_0\kappa)\left(\rho_{p}\left(u\right)\right)^p-\lambda \frac{C}{q}\left(\rho_{p}\left(u\right)\right)^q -{\frac{\mathcal{S}(p)^{-\frac{p^*_{s_\sharp}}{p}}}{p^*_{s_\sharp}}}\left(\rho_{p}\left(u\right)\right)^{p^*_{s_\sharp}}=f_\lambda(\rho_{p}\left(u\right)), \nonumber \\
\geq & \frac{1}{p}(1-c_0\kappa)\left(\eta_{p}\left(u\right)\right)^p-\lambda \frac{C}{q}\left(\eta_{p}\left(u\right)\right)^q -{\frac{C\mathcal{S}(p)^{-\frac{p^*_{s_\sharp}}{p}}}{p^*_{s_\sharp}}}\left(\eta_{p}\left(u\right)\right)^{p^*_{s_\sharp}}, \nonumber
\end{align}
where $C$ represents the differing constant values for both different and same lines and the function $f_\lambda:[0,\infty)\rightarrow \mathbb{R}$ is defined as follows:
$$f_\lambda (t)=(1-c_0\kappa)\frac{1}{p}t^p -\lambda \frac{C}{q}t^q-{\frac{\mathcal{S}(p)^{-\frac{p^*_{s_\sharp}}{p}}}{p^*_{s_\sharp}}} t^{p^*_{s_\sharp}}.$$
We fix a sufficiently small $T_1>0$ such that 
$$(1-c_0\kappa)\frac{1}{p} T_1^p-{\frac{\mathcal{S}(p)^{-\frac{p^*_{s_\sharp}}{p}}}{p^*_{s_\sharp}}} T_1^{p^*_{s_\sharp}}>0,$$
and choose $\lambda_0$ such that
\begin{equation}\label{eq3.27}
    0<\lambda_0< \frac{q}{CT_1^q}\left((1-c_0\kappa)\frac{1}{p} T_1^p-{\frac{\mathcal{S}(p)^{-\frac{p^*_{s_\sharp}}{p}}}{p^*_{s_\sharp}}} T_1^{p^*_{s_\sharp}}\right).
    \end{equation}
 Thus, we get $f_{\lambda_0}(T_1)>0$. Consider the set, $T_0=\max \{t\in (0, T_1): f_{\lambda_0}(t) \leq 0\}.$ Since, $q<p$, we have $f_{\lambda_0}(t)<0$ as $t$ approaches $0$. Therefore, using the definition of $T_0$ and the fact $f_{\lambda_0}(T_1)>0$, we conclude that $f_{\lambda_0}(T_0)=0$. Consider a nonincreasing function $\Phi\in C_c^\infty([0,\infty))$ such that $0\leq \Phi(t)\leq 1$ for all $t\in[0,\infty)$ and 
\begin{align*}
\Phi(t)= 
    \begin{cases}
        & 1 ~\text{if}~t\in [0,T_0]\\
        & 0 ~\text{if}~ t\in [T_1,\infty).
    \end{cases}
\end{align*}
 Therefore, for any $u\in X_p(\Omega)$, we define the truncated functional $\mathcal{J}_\lambda$ as follows:
\begin{align}
   \mathcal{J}_\lambda(u)=& \frac{1}{p}\left\{\left(\rho_{p}\left(u\right)\right)^p-\int_{[0, \bar{s}]}\left[u\right]_{s, p}^{p} d \mu^{-}(s)\right\}-\frac{\lambda}{q}\|u\|_{L^q(\Omega)}^q-\frac{{\Phi(\rho_p(u))}}{p_{s_\sharp}^{*}}\|u\|_{L^{p_{s_\sharp}^{*}}(\Omega)}^{p_{s_\sharp}^{*}}\\
    =&\frac{1}{p}(\eta_p(u))^p-\frac{\lambda}{q}\|u\|_{L^q(\Omega)}^q-\frac{{\Phi(\rho_p(u))}}{p_{s_\sharp}^{*}}\|u\|_{L^{p_{s_\sharp}^{*}}(\Omega)}^{p_{s_\sharp}^{*}}.
\end{align}
Clearly,  $\mathcal{J}_\lambda$ is bounded from below. Since, $\mathcal{J}_\lambda(u)\rightarrow \infty$ as $\rho_p(u) \rightarrow \infty$, conclude that $\mathcal{J}_\lambda$ is coercive.

\begin{lem}\label{lmn3.4}
    There exists $\lambda_*>0$ such that for all $\lambda \in (0, \lambda_*)$, the following statements are true.
    \begin{itemize}
        \item[(i)] If $\mathcal{J}_\lambda (u) \leq 0$ then {$\rho_p(u)<T_0$} and there exists a neighbourhood $U$ of $u$ such that $\mathcal{I}_\lambda(v)=\mathcal{J}_\lambda(v)$ for all $v\in U$.
        \item [(ii)] $\mathcal{J}_\lambda$ is satisfy $(\mathrm{PS})_c$ condition for any $c<0$.
    \end{itemize}
    \end{lem}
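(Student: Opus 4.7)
Part (i) is a scalar majorization argument: I bound $\mathcal{J}_\lambda$ below by a one-variable function $g_\lambda(t)$ of $t = [\eta_p(u)]$ that is strictly positive on $[T_0,\infty)$ once $\lambda$ is chosen small, forcing every point in $\{\mathcal{J}_\lambda \leq 0\}$ to lie strictly inside $\{[\eta_p(u)] < T_0\}$ where the truncation is inactive. Part (ii) then leverages this coincidence together with the coercivity of $\mathcal{J}_\lambda$ to reduce the $(\mathrm{PS})_c$ verification for $\mathcal{J}_\lambda$ at a negative level to the one already established for $\mathcal{I}_\lambda$ in Lemma \ref{PS condition lem}.

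\textbf{Part (i).} Using the continuous embeddings $X_p(\Omega) \hookrightarrow L^q(\Omega)$ and $X_p(\Omega) \hookrightarrow L^{p_{s_\sharp}^*}(\Omega)$ together with the norm equivalence from Lemma \ref{lmn2.8}, I produce a scalar lower bound of the form
\begin{equation*}
\mathcal{J}_\lambda(u) \;\geq\; \tfrac{1}{p}[\eta_p(u)]^p - \tfrac{\lambda C}{q}[\eta_p(u)]^q - \tfrac{\Phi([\eta_p(u)])}{p_{s_\sharp}^*}\mathcal{S}(p)^{-p_{s_\sharp}^*/p}[\eta_p(u)]^{p_{s_\sharp}^*} \;=:\; g_\lambda([\eta_p(u)]).
\end{equation*}
On $[0,T_0]$ one has $g_\lambda = f_\lambda$. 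On $[T_0, T_1]$, $\Phi \leq 1$ yields $g_\lambda \geq f_\lambda$, and since $T_0$ is the largest zero of $f_{\lambda_0}$ in $(0, T_1]$ while $\partial_\lambda f_\lambda(t) = -(C/q)t^q < 0$, the strict inequality $\lambda < \lambda_0$ forces $f_\lambda > 0$ on $[T_0, T_1]$. On $[T_1, \infty)$, $\Phi \equiv 0$ reduces $g_\lambda$ to $t \mapsto \tfrac{1}{p}t^p - \tfrac{\lambda C}{q}t^q$, which is positive for every $t \geq T_1$ whenever $\lambda < \tfrac{q\,T_1^{p-q}}{Cp}$. Setting
\begin{equation*}
\lambda_* := \min\!\left\{\lambda_0,\ \tfrac{q\,T_1^{p-q}}{Cp}\right\},
\end{equation*}
I obtain $g_\lambda(t) > 0$ for all $t \geq T_0$ and $\lambda \in (0, \lambda_*)$. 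Hence $\mathcal{J}_\lambda(u) \leq 0$ forces $[\eta_p(u)] < T_0$ strictly. Continuity of $\eta_p$ on $X_p(\Omega)$ then supplies an open ball $U \ni u$ on which $[\eta_p(\cdot)] < T_0$, so that $\Phi([\eta_p(\cdot)]) \equiv 1$ on $U$; combined with the identity $[\eta_p(v)]^p = [\rho_p(v)]^p - \int_{[0,\bar{s}]}[v]_{s,p}^p \,\d\mu^-(s)$, this yields $\mathcal{J}_\lambda = \mathcal{I}_\lambda$ throughout $U$.

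\textbf{Part (ii).} Let $(u_n) \subset X_p(\Omega)$ be a $(\mathrm{PS})_c$ sequence for $\mathcal{J}_\lambda$ with $c < 0$. Coercivity of $\mathcal{J}_\lambda$ implies that $(u_n)$ is bounded in $X_p(\Omega)$. For $n$ large, $\mathcal{J}_\lambda(u_n) < c/2 < 0$, so part (i) furnishes open neighborhoods $U_n \ni u_n$ on which $\mathcal{J}_\lambda \equiv \mathcal{I}_\lambda$; taking Fr\'echet derivatives yields $\mathcal{I}_\lambda'(u_n) = \mathcal{J}_\lambda'(u_n) \to 0$ in $X_p(\Omega)^*$ and $\mathcal{I}_\lambda(u_n) \to c$, so $(u_n)$ is a $(\mathrm{PS})_c$ sequence for $\mathcal{I}_\lambda$ as well. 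Since $c < 0 < c^*$, Lemma \ref{PS condition lem} delivers a strongly convergent subsequence in $X_p(\Omega)$.

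\textbf{Main obstacle.} The technical heart lies in the two-regime positivity analysis of $g_\lambda$ on $[T_0,\infty)$: one must separately control the transition zone $[T_0, T_1]$, where the cutoff weakens but $f_\lambda$ must already be strictly positive, and the tail $[T_1, \infty)$, where the critical term is switched off and only the $t^p$--$t^q$ balance survives. This dictates the quantitative choice of $\lambda_*$ and is the reason it must be strictly smaller than the $\lambda_0$ appearing in \eqref{eq3.27}. A subtle but essential observation for part (ii) is that the proof of Lemma \ref{PS condition lem}, in its Case (i), only uses the strict inequality $c < c^*$ when bounding $\rho_p(\widetilde u_n)$, so the $(\mathrm{PS})_c$ conclusion carries over verbatim to negative levels $c < 0 < c^*$.
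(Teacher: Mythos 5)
Your proposal follows essentially the same route as the paper — a scalar majorization $\mathcal{J}_\lambda \geq g_\lambda([\eta_p(\cdot)])$ split into the two regimes $[T_0,T_1]$ and $[T_1,\infty)$, forcing $\mathcal{J}_\lambda\le 0 \Rightarrow [\eta_p(u)]<T_0$, then coercivity plus the local coincidence $\mathcal{J}_\lambda=\mathcal{I}_\lambda$ to hand the $(\mathrm{PS})_c$ verification to Lemma~\ref{PS condition lem}. Part (i) is correct and matches the paper (your $\tfrac{qT_1^{p-q}}{Cp}$ is the paper's $\tilde{\lambda}$ from \eqref{eq3.28}, and your use of $\lambda<\lambda_0\Rightarrow f_\lambda>f_{\lambda_0}$ on $[T_0,T_1]$ is the paper's second subcase).

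However, Part~(ii) has a genuine gap: you set $\lambda_* := \min\{\lambda_0, qT_1^{p-q}/(Cp)\}$ and then assert ``Since $c<0<c^*$, Lemma~\ref{PS condition lem} delivers a convergent subsequence.'' But for $q\in(1,p)$ the threshold $c^*$ in \eqref{eq3.4} is \emph{not} automatically positive — it contains a subtracted term proportional to $\lambda^{p^*_{s_\sharp}/(p^*_{s_\sharp}-q)}$, so $c^*>0$ is an additional smallness constraint on $\lambda$. Without it, a negative level $c$ need not satisfy $c<c^*$, and Lemma~\ref{PS condition lem} simply does not apply. The paper fixes this by introducing a further threshold $\lambda'$ via \eqref{eq3.29} (precisely the condition $c^*>0$) and defining $\lambda_*\leq \min\{\lambda_0,\tilde{\lambda},\lambda'\}$. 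Your $\lambda_*$ omits this third constraint, which is neither implied by $\lambda_0$ nor by $\tilde\lambda$ (they are governed by $T_1$ and the embedding constant, not by $\theta_0$, $\mathcal{S}(p)$, $|\Omega|$). You do correctly observe that the proof of Lemma~\ref{PS condition lem} really only uses $c<c^*$ rather than $c\in(0,c^*)$ — a remark the paper leaves implicit — but that observation is moot until you secure $c^*>0$ by shrinking $\lambda_*$ further.
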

\begin{proof}
    Recall, $\lambda_0$ as in \eqref{eq3.27}. We define
    \begin{align}\label{eq3.28}
        \tilde{\lambda}=\frac{q(1-c_0\kappa)}{pC}T_1^{p-q}
    \end{align}
    and choose $\lambda'$ sufficiently small such that 
    \begin{align}\label{eq3.29}
        \frac{s_{\sharp}}{N}\left(\left(1-\theta_{0}\right) \mathcal{S}(p)\right)^{N / ps_{\sharp}}-|\Omega|\left(\frac{s_\sharp}{N}\right)^{-\frac{q}{p^*_{s_{\sharp}}-q}}\left[\lambda' \left(\frac{1}{q}-\frac{1}{p}\right)\right]^{\frac{p^*_{s_{\sharp}}}{p^*_{s_{\sharp}}-q}}>0.
         \end{align}
    Fix $\lambda \in (0, \lambda_*)$, where $\lambda_* \leq \min\{\lambda_0, \tilde{\lambda}, \lambda' \}$. We divide the proof into two parts.\\

    \noindent   \textbf{Part I:} Let $\mathcal{J}_\lambda(u)\leq 0$. For $\rho_p(u) \geq T_1$, we use the Sobolev embedding \eqref{embedding} to obtain
    $$\mathcal{J}_\lambda(u)\geq (1-c_0\kappa)\frac{1}{p}(\rho_p(u))^p-\frac{\lambda C}{q}(\rho_p(u))^q=g_\lambda(\rho_p(u)),$$
    where the continuous function $g_\lambda:[0,\infty)\rightarrow \mathbb{R}$ is defined as follows
    $${g_\lambda(t)=(1-c_0\kappa)\frac{1}{p}t^p-\frac{\lambda C}{q}t^q.}$$
    Observe that $g_\lambda$ has just two roots $t_0=0$ and $t_1=(\frac{\lambda p C}{q(1-c_0\kappa)})^\frac{1}{p-q}$. Moreover, $g_\lambda(t) \to \infty$ as $t \to \infty$, since $p > q$. Now, $\lambda<\tilde{\lambda}$ implies $t_1<T_1$. Thus, we get $0< \mathcal{J}_\lambda(u)\leq 0,$ which constitutes a contradiction. 
    \par When $\rho_p(u) < T_1$, using $0\leq \Phi(t)\leq 1 $ and $\lambda<\lambda_0$, we obtain
    $$0 \geq \mathcal{J}_\lambda(u)\geq f_\lambda(\rho_p(u))> f_{\lambda_0}(\rho_p(u)).$$
    Therefore, using the definition of $T_0$, we conclude that $\rho_p(u)<T_0$. In addition, using the continuity of the functional $\mathcal{J}_\lambda$, there exists a sufficiently small neighbourhood $U\subset B_{T_0}(0)$ of $u$ such that $\mathcal{J}_\lambda(v)<0$. Therefore, for any $v \in U \subset B_{T_0}(0)$, it follows that $\mathcal{J}_\lambda(v) = \mathcal{I}_\lambda(v)$.\\ 
    
    \noindent  \textbf{Part II:} Let $c<0$ and $(u_n)$ be a $(\mathrm{PS})_c$ sequence for $\mathcal{J}_\lambda$ at the level $c$. From \textbf{Part I}, we get $\mathcal{J}_\lambda(u_n) = \mathcal{I}_\lambda(u_n)$ for sufficiently large $n\in \mathbb{N}$. Since, $\mathcal{J}_\lambda$ is coercive, the sequence $(u_n)$ is bounded in $X_p(\Omega)$. Therefore, for $\lambda<\lambda'$ as stated in $\eqref{eq3.29}$ and from Lemma \ref{PS condition lem}, we conclude that $\mathcal{J}_\lambda$ satisfies the $(\mathrm{PS})_c$ condition. This completes the proof.
\end{proof}

\begin{lem}\label{lmn3.5}
    Let $k\in \mathbb{N}$ and $\lambda>0$. Then there exists $\epsilon=\epsilon(k,\lambda)>0$ depending on $\lambda$ and $k$, such that $\gamma(\mathcal{J}_\lambda^{-\epsilon})\geq k$, where $\mathcal{J}_\lambda^{-\epsilon}:=\{u\in X_p(\Omega): \mathcal{J}_\lambda(u)\leq -\epsilon\}$. 
\end{lem}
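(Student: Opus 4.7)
The plan is a standard Krasnoselskii genus argument tailored to the sublinear regime $q<p$. The strategy is to exhibit inside $\mathcal{J}_\lambda^{-\epsilon}$ a set homeomorphic, via an odd map, to the sphere $\mathbb{S}^{k-1}$, and then invoke Proposition \ref{prop3.3}(v), (ii), and (i) to conclude $\gamma(\mathcal{J}_\lambda^{-\epsilon}) \geq k$.

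Since $X_p(\Omega)$ is infinite-dimensional (for instance, $C_c^\infty(\Omega) \subset X_p(\Omega)$), I would fix an arbitrary $k$-dimensional subspace $F_k \subset X_p(\Omega)$. On $F_k$ all norms are equivalent, so there exists a constant $c_k = c_k(F_k) > 0$ such that
\[
\|u\|_{L^q(\Omega)}^q \;\geq\; c_k\,[\eta_p(u)]^q \qquad \text{for every } u \in F_k.
\]
Since $\Phi \geq 0$, the critical term in $\mathcal{J}_\lambda$ is non-positive, and hence, for every $u \in F_k$,
\[
\mathcal{J}_\lambda(u) \;\leq\; \frac{1}{p}[\eta_p(u)]^p - \frac{\lambda c_k}{q}[\eta_p(u)]^q.
\]

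Because $q < p$, the real function $r \mapsto \tfrac{r^p}{p} - \tfrac{\lambda c_k}{q}\,r^q$ is strictly negative on a small punctured interval $(0, r_0)$ with $r_0 = r_0(\lambda, k) > 0$. Fixing any $r \in (0, r_0)$ and setting
\[
\epsilon \;=\; \epsilon(k,\lambda) \;:=\; -\left(\frac{r^p}{p} - \frac{\lambda c_k}{q}\,r^q\right) \;>\; 0,
\]
the sphere $S_r := \{u \in F_k : [\eta_p(u)] = r\}$ is contained in $\mathcal{J}_\lambda^{-\epsilon}$. Moreover, $S_r$ is odd-homeomorphic to the unit sphere $\mathbb{S}^{k-1}$ via scaling composed with a linear isomorphism $F_k \to \mathbb{R}^k$, whence Proposition \ref{prop3.3}(v) and (ii) give $\gamma(S_r) = k$.

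It remains to verify that $\mathcal{J}_\lambda^{-\epsilon} \in \Gamma$: it is closed by continuity of $\mathcal{J}_\lambda$, symmetric because $\mathcal{J}_\lambda$ is even, and does not contain the origin since $\mathcal{J}_\lambda(0) = 0 > -\epsilon$. The monotonicity of the genus (Proposition \ref{prop3.3}(i)) then forces $\gamma(\mathcal{J}_\lambda^{-\epsilon}) \geq \gamma(S_r) = k$. I anticipate no serious obstacle; the only delicate point is choosing $r$ small enough for the sublinear term to dominate, and this is precisely where the hypothesis $q < p$ plays a decisive role.
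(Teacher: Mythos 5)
Your proof is correct and takes essentially the same route as the paper: fix a $k$-dimensional subspace, use equivalence of norms there to bound the sublinear term from below, drop the nonpositive critical contribution, and observe that for small radius the sublinear term dominates, so the corresponding sphere (odd-homeomorphic to $\mathbb{S}^{k-1}$) sits inside $\mathcal{J}_\lambda^{-\epsilon}$; genus monotonicity finishes the job. The one small variation is that you drop the critical term directly from $\mathcal{J}_\lambda$ using $\Phi\geq 0$, whereas the paper first restricts to $[\eta_p(u)]\leq T_0$ to identify $\mathcal{J}_\lambda$ with $\mathcal{I}_\lambda$ before discarding that term; your version avoids the extra constraint on the sphere radius and also states the norm-equivalence inequality with the correct exponent $q$, which appears to be a typo in the paper's displayed inequality.
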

\begin{proof}
    Recall that every norm is equivalent in finite dimensional normed linear space. Let $k\in \mathbb{N}$ and $\lambda>0$. Let $\mathcal{V}_k$  be a $k$-dimensional subspace of $X_p(\Omega)$. Therefore, $\rho_p(.)$ and $\|.\|_{L^q(\Omega)}$ are equivalent norms in $\mathcal{V}_k$ for every fixed $k$. Thus, there exists a constant $C=C(k)>0$ such that 
    \begin{equation}\label{eq3.30}
    C\rho_p(u)\leq \|u\|_{L^q(\Omega)},~ \forall~ u\in \mathcal{V}_k \text{ and } 1\leq q<\infty.
    \end{equation}
    For any $u\in \mathcal{V}_k$ with $\rho_p(u) \leq T_0$ and using \eqref{eq3.30} and Lemma \ref{lmn2.8}, we get
    \begin{align}\label{eq3.31}
        \mathcal{I}_\lambda(u)=\mathcal{J}_\lambda(u) \leq & \frac{1}{p}\left\{\left(\rho_{p}\left(u\right)\right)^p-\int_{[0, \bar{s}]}\left[u\right]_{s, p}^{p} d \mu^{-}(s)\right\}-\frac{\lambda}{q}\|u\|_{L^q(\Omega)}^q \nonumber \\
         \leq & \frac{1}{p} \left\{\left(\rho_{p}\left(u\right)\right)^p-\int_{[0, \bar{s}]}\left[u\right]_{s, p}^{p} d \mu^{-}(s)\right\} -\frac{\lambda C}{q} (\rho_p(u))^q\nonumber \\
        \leq & \frac{1}{p} (\eta_p(u))^p -\frac{\lambda C}{q} (\rho_p(u))^q \leq \frac{1}{p} (\rho_p(u))^p -\frac{\lambda C}{q} (\rho_p(u))^q .
     \end{align}
    Let $S_a:=\{u\in \mathcal{V}_k: \rho_p(u)=a \},$ where the constant $a>0$ is such that
    \begin{equation}\label{eq3.32}
        0<a< \min \left\{ T_0,\left( \frac{\lambda C p}{q}\right)^\frac{1}{p-q}\right\}.
    \end{equation}
    Observe that $S_a$ is odd homeomorphic to the $(k-1)$-dimensional sphere $\mathbb{S}^{k-1}$. Furthermore, employing \eqref{eq3.31} and \eqref{eq3.32} for each $u\in S_a$, we obtain
    \begin{equation}
        \mathcal{J}_\lambda(u) \leq a^q \left( \frac{1}{p} a^{p-q} -\frac{\lambda C}{q}\right)<0.
    \end{equation}
    Therefore, for any $u\in S_a$, there exists a constant $\epsilon(\lambda, k)>0$ such that $\mathcal{J}_\lambda(u)<-\epsilon$. Also, we have $S_a \subset \mathcal{J}_\lambda^{-\epsilon} $. Thus, using $(i),(ii)$ and $(iv)$ of Proposition \ref{prop3.3}, we get
    $$\gamma(\mathcal{J}_\lambda^{-\epsilon})\geq \gamma (S_a)=k.$$
    This completes the proof.
\end{proof}

For any $k\in \mathbb{N}$, we define the min-max values $c_k$ as,
\begin{equation}\label{eq3.34}
    c_k=\inf_{A\in \Sigma_k} \sup_{u \in A} \mathcal{J}_\lambda (u),
\end{equation}
where 
$\Sigma_k=\{A\subset X_p(\Omega): A ~\text{symmetric and closed such that} ~0\notin A ~\text{with}~ \gamma(A)\geq k \}.$
It is important to note that $c_k \leq c_{k+1}$ for all $k \in \mathbb{N}$.

\begin{lem}\label{lmn3.6}
    For each $\lambda>0$ and $k \in \mathbb{N}$, we have $c_k<0$.
\end{lem}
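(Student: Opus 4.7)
The plan is to use Lemma \ref{lmn3.5} directly: it provides a concrete set witnessing that the min--max value $c_k$ is strictly negative. Specifically, for the given $k \in \mathbb{N}$ and $\lambda > 0$, I would invoke Lemma \ref{lmn3.5} to produce an $\epsilon = \epsilon(k,\lambda) > 0$ such that $\gamma(\mathcal{J}_\lambda^{-\epsilon}) \geq k$, and then use the set $A := \mathcal{J}_\lambda^{-\epsilon}$ as a test element in the infimum defining $c_k$.

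To make this work, I need to verify that $A \in \Sigma_k$. First, $A$ is closed because $\mathcal{J}_\lambda$ is continuous on $X_p(\Omega)$, so $A = \mathcal{J}_\lambda^{-1}((-\infty,-\epsilon])$ is the preimage of a closed set. Second, $A$ is symmetric since $\mathcal{J}_\lambda$ is even: the norm $\eta_p$ is even, and $|u|^q$ and $|u|^{p_{s_\sharp}^*}$ depend only on $|u|$, so $\mathcal{J}_\lambda(-u) = \mathcal{J}_\lambda(u)$. Third, $0 \notin A$ since $\mathcal{J}_\lambda(0) = 0 > -\epsilon$. Finally, $\gamma(A) \geq k$ was already established in Lemma \ref{lmn3.5}.

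With $A \in \Sigma_k$, the definition \eqref{eq3.34} yields
\begin{equation*}
c_k = \inf_{B \in \Sigma_k} \sup_{u \in B} \mathcal{J}_\lambda(u) \leq \sup_{u \in A} \mathcal{J}_\lambda(u) \leq -\epsilon < 0,
\end{equation*}
which is precisely the desired conclusion. There is no real obstacle here; the substance of the argument was already carried out in Lemma \ref{lmn3.5}, and this lemma is essentially a short bookkeeping step that repackages that content into the min--max framework used in the subsequent genus-based existence proof.
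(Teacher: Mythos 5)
Your proof is correct and takes essentially the same approach as the paper: invoke Lemma \ref{lmn3.5} to obtain $\epsilon>0$ with $\gamma(\mathcal{J}_\lambda^{-\epsilon})\geq k$, verify $\mathcal{J}_\lambda^{-\epsilon}\in\Sigma_k$, and bound $c_k$ from above by $-\epsilon$ via the infimum. (Incidentally, the paper's proof contains a typographical slip, citing Lemma \ref{lmn3.4} where it means Lemma \ref{lmn3.5}; you correctly reference \ref{lmn3.5}.)
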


\begin{proof}
    Fix $\lambda>0$ and $k \in \mathbb{N}$. From Lemma \ref{lmn3.4}, there exists $\epsilon>0$ such that $\gamma(\mathcal{J}_\lambda^{-\epsilon})\geq k$. Since, 
 $\mathcal{J}_\lambda^{-\epsilon}$ is closed and symmetric, we have $\mathcal{J}_\lambda^{-\epsilon} \in \Sigma_k$. Moreover, $0\notin \mathcal{J}_\lambda^{-\epsilon}$ since $\mathcal{J}_\lambda(0)=0$ and $\sup_{u\in \mathcal{J}_\lambda^{-\epsilon}} \mathcal{J}_\lambda (u) \leq -\epsilon$. Consequently, using the fact that $\mathcal{J}_\lambda$ is bounded from below, we derive 
 $$-\infty < c_k = \inf_{A \in \Sigma_k} \sup_{u \in A} \mathcal{J}_\lambda (u) \leq \sup_{u \in \mathcal{J}_\lambda^{-\epsilon}} \mathcal{J}_\lambda (u) \leq -\epsilon<0.$$
 This completes the proof.
\end{proof}

\begin{lem}\label{lmn3.7}
    Let $\lambda \in(0, \lambda_*)$, where $\lambda_*$ is as in Lemma \ref{lmn3.4} and $k\in \mathbb{N}$. If $c=c_k=c_{k+1}=c_{k+2}=...=c_{k+m}$ for some $m\in \mathbb{N}$, then $\gamma(K_c)\geq m+1,$ where $K_c=\{u\in X_p(\Omega): \mathcal{J}_\lambda(u)=c~\text{and}~\mathcal{J'}_\lambda(u)=0\}$. 
\end{lem}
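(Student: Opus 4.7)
The natural strategy is a contradiction argument in the spirit of Clark and Rabinowitz \cite{R1986}. Suppose that $\gamma(K_c)\leq m$. Since $c<0$ by Lemma \ref{lmn3.6} while $\mathcal{J}_\lambda(0)=0$, one has $0\notin K_c$, and the $(\mathrm{PS})_c$ property from Lemma \ref{lmn3.4}(ii), combined with the coercivity of $\mathcal{J}_\lambda$, forces $K_c$ to be compact in $X_p(\Omega)$. Proposition \ref{prop3.3}(vi) then furnishes a $\delta>0$ for which the closed symmetric neighbourhood $N:=N_\delta(K_c)$ avoids $0$ and satisfies $\gamma(N)=\gamma(K_c)\leq m$; I would take $U$ to be the open symmetric $\delta/2$-neighbourhood of $K_c$, so that $U\subset N$.

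Since $\mathcal{J}_\lambda$ is even and satisfies $(\mathrm{PS})_c$ at the level $c$, the deformation Lemma \ref{lmn3.3} applied to this $U$ provides constants $d_1,\epsilon>0$ and an odd homeomorphism $\beta_1$ of $X_p(\Omega)$ with $\beta_1(\mathcal{J}_\lambda^{c+d_1}\setminus U)\subseteq \mathcal{J}_\lambda^{c-d_1}$. By the definition of $c=c_{k+m}$ in \eqref{eq3.34}, there exists $A\in\Sigma_{k+m}$ with $\sup_{u\in A}\mathcal{J}_\lambda(u)<c+d_1$, so in particular $A\subseteq\mathcal{J}_\lambda^{c+d_1}$. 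Set $B:=\overline{A\setminus U}$. Because $U$ is open and $A$ closed, $A\setminus U$ is already closed, so $B=A\setminus U$ and $B\cap U=\emptyset$. Since $A\setminus N\subseteq A\setminus U=B$ and $B$ is closed, properties (i) and (iv) of Proposition \ref{prop3.3} yield
\begin{equation*}
\gamma(B)\geq \gamma\bigl(\overline{A\setminus N}\bigr)\geq \gamma(A)-\gamma(N)\geq (k+m)-m=k.
\end{equation*}

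From $B\subseteq \mathcal{J}_\lambda^{c+d_1}\setminus U$, the deformation inclusion gives $\beta_1(B)\subseteq \mathcal{J}_\lambda^{c-d_1}$, and since $\beta_1$ is an odd homeomorphism, Proposition \ref{prop3.3}(ii) forces $\gamma(\beta_1(B))=\gamma(B)\geq k$, placing $\beta_1(B)\in\Sigma_k$. Invoking the definition of $c_k$ then produces
\begin{equation*}
c=c_k\leq \sup_{u\in\beta_1(B)}\mathcal{J}_\lambda(u)\leq c-d_1 < c,
\end{equation*}
a contradiction, which yields $\gamma(K_c)\geq m+1$. The essential bookkeeping hurdle is the simultaneous use of a closed neighbourhood $N$ (to bound the genus through Proposition \ref{prop3.3}(iv)) and an open neighbourhood $U\subset N$ (so that $A\setminus U$ is automatically closed and disjoint from $U$ before the deformation is applied); everything else follows directly from the preceding results, with compactness of $K_c$ being an immediate consequence of the $(\mathrm{PS})_c$ property already established at the negative level $c$.
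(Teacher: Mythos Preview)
Your proof is correct and follows essentially the same contradiction argument as the paper's own proof, combining the deformation lemma with the subtractivity of the genus. Your explicit distinction between the open neighbourhood $U$ (to which the deformation is applied, ensuring $A\setminus U$ is closed) and the closed neighbourhood $N\supset U$ (on which the genus is controlled via Proposition~\ref{prop3.3}(iv)) is in fact a small technical refinement over the paper's version, which works directly with the closed set $N_\delta(K_c)$ in both roles.
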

\begin{proof}
   Let $\lambda \in (0, \lambda_*)$ and $k \in \mathbb{N}$. Applying Lemma \ref{lmn3.6}, we get $c=c_k=c_{k+1}=c_{k+2}=...=c_{k+m}<0$. Thanks to Lemma \ref{lmn3.4}, we conclude that $K_c$ is compact. Now, suppose $\gamma(K_c)\leq m$. Therefore, from Proposition \ref{prop3.3}-(vi), there exist $\delta>0$ and a neighbourhood $N_\delta(K_c)$ of $K_c$ such that $\gamma(N_\delta(K_c))=\gamma(K_c)\leq m$, where
   $$N_\delta(K_c)=\{w \in X_p(\Omega):\dist(w,K_c)\leq \delta\}.$$
   From Lemma \ref{lmn3.3}, there exist $\epsilon\in (0,-c)$ and odd homeomorphism $\beta:X_p(\Omega)\rightarrow X_p(\Omega)$ such that
    \begin{equation}\label{eq3.340}
        \beta(\mathcal{J}_\lambda^{c+\epsilon}\setminus N_\delta(K_c))\subset \mathcal{J}_\lambda^{c-\epsilon}.
        \end{equation}
      Now using the definition of infimum and $c_{k+m}=c$ as in \eqref{eq3.34}, there exists $A\in \Sigma_{k+m}$ such that 
      $$\sup_{u\in A}\mathcal{J}_\lambda(u)<c+\epsilon,$$
      implying that $A\subset \mathcal{J}_\lambda^{c+\epsilon}$. Therefore, we get 
      \begin{align}\label{eq3.35}
          \beta(A \setminus N_\delta(K_c))\leq \beta(\mathcal{J}_\lambda^{c+\epsilon}\setminus N_\delta(K_c))\subset \mathcal{J}_\lambda^{c-\epsilon},
      \end{align}
      which implies $\mathcal{J}_\lambda (\beta(A \setminus N_\delta(K_c))) \leq c-\epsilon$. On the other hand, using $(i)$ and $(iii)$ of Proposition \ref{prop3.3}, we obtain 
      $$\gamma(\beta(\overline{A \setminus N_\delta(K_c)}))\geq \gamma(\overline{A \setminus N_\delta(K_c)})\geq \gamma(A)-\gamma(N_\delta(K_c))\geq k.$$
      Therefore, we have $\beta (\overline{A \setminus N_\delta(K_c)})\in \Sigma_k$. From the definition of $c_k$, we get 
    $$\sup_{u\in \beta (\overline{A \setminus N_\delta(K_c)})} \mathcal{J}_\lambda(u) \geq c_k=c, $$
      which is a contradiction to \eqref{eq3.35}. Therefore, $\gamma(K_c)\geq m+1$. This completes the proof. 
\end{proof}
We have now obtained the essential results to prove our main theorem.

\begin{center}
    Proof of Theorem \ref{thm1.1}
\end{center}

Let $\lambda \in (0, \lambda_*)$, where $\lambda_*$ is defined in Lemma \ref{lmn3.4}. From Lemma \ref{lmn3.6}, it follows that $c_k<0$.  Furthermore, Lemma \ref{lmn3.4}-(ii) guarantees that the functional $\mathcal{J}_\lambda$ satisfies the $(\mathrm{PS})_{c_k}$ condition. Therefore, we conclude that $c_k$ is a critical value of $\mathcal{J}_\lambda$ for every $k\in \mathbb{N}$, (see Rabinowitz \cite{R1986}). Now, there are two cases for the critical values $c_k$ that occur.\\

\noindent \textbf{Case I:} When $-\infty<c_1<c_2<\ldots<c_k<c_{k+1}<\ldots$. In this case, $\mathcal{J}_\lambda$ has infinitely many critical values.\\

\noindent \textbf{Case II:-} When there exist $k,m\in \mathbb{N}$ such that $c_k=c_{k+1}=c_{k+2}=....c_{k+m}=c$. Thanks to Lemma \ref{lmn3.7}, we get $\gamma(K_c)\geq m+1\geq 2$. Therefore, using \cite[Remark 7.3]{R1986} and Lemma \ref{lmn3.4}-(ii), we deduce that $K_c$ possesses infinitely many critical points of the functional $\mathcal{J}_\lambda$.

\par Therefore, using Lemma \ref{lmn3.4}-(i), we obtain infinitely many negative critical values for $\mathcal{J}_\lambda=\mathcal{I}_\lambda$. Hence, the problem \eqref{problem 2} has infinitely many weak solutions in the sense of Definition \ref{weaksol MP}. This completes the proof.

\section{Important examples and applications} \label{secexpa}

In this section, we build upon the main results established earlier to examine several noteworthy cases, each arising from a particular choice of the measure $\mu.$ The notation $\mathbb{X}(\Omega)$ is used to represent the appropriate Sobolev space for each example. Since the underlying operator varies from case to case, this notation should be interpreted accordingly and specified precisely in terms of $X_p(\Omega).$\\

\textbf{The Laplace and $p$-Laplacian:-}\label{subsec2.1}
At first, we will provide the results of the classical Laplacian $(-\Delta)$ and the classical $p$-Laplacian $-\Delta_{p}$.  Indeed, with a particular choice of the measure $\mu^+$, Theorem \ref{thm1.1} re-establishes the main results from \cite[Theorem 4.5]{AP1991}.
    \begin{proof}
        We choose $\mu=\delta_{1},$ the Dirac measure (unit mass) centred at the point $1.$  Consequently, it fulfils all conditions \eqref{measure 1}-\eqref{measure 3} with $\bar{s}=1$ and $\kappa=0.$  Therefore, we select $s_\sharp:=1$ as indicated in \eqref{measure 4}.  The proof immediately derives from Theorem \ref{thm1.1}.
    \end{proof}

\textbf{Fractional Laplacian and Fractional $p$-Laplacian:-} \label{subsec2.2}
The next results of the fractional Laplacian $(-\Delta)^s$ and fractional $p$-Laplacian $(-\Delta)^s_p$. Indeed, with an individual choice of the measure $\mu^+$, Theorem \ref{thm1.1} recovers the main results of \cite[Theorem 1.4]{YZ2024}.

\begin{proof}
   For $s \in (0, 1),$ we take the measure $\mu:=\delta_s,$ where $\delta_s$ denotes the Dirac measure centred at $s.$  It is thus straightforward to verify that $\mu$ fulfils all the conditions \eqref{measure 1}-\eqref{measure 3} with $\bar{s}=s$ and $\kappa=0.$  In this context, we choice $s_\sharp:=s$ and the proof follows directly from Theorem \ref{thm1.1}.
\end{proof}

\textbf{Mixed local and nonlocal operators and Mixed local and nonlocal $p$- Laplacian operators:-} \label{subsec2.3}
Let us now give results related to the mixed local and nonlocal operator $( -\Delta+(-\Delta)^s)$ and mixed local and nonlocal $p$-Laplace operator $( -\Delta_{p} +(-\Delta_{p})^s)$.  Furthermore, Theorem \ref{thm1.1} is immediately derived from \cite[Theorem 1.1]{DFV24}.

\begin{proof}
   In this situation, we choose $\mu:=\delta_1+\delta_s,$ where $\delta_1$ and $\delta_s$ denote the Dirac measures centred at $1$ and $s \in (0, 1),$ respectively.  Consequently, $\mu$ satisfying all of the conditions \eqref{measure 1}-\eqref{measure 3} with $\bar{s}:=1$ and $\kappa:=0.$  We now take $s_\sharp:=1$, and the proof is derived from Theorem \ref{thm1.1}.
\end{proof}
\begin{rem}
    Another generalized mixed local and nonlocal operator is $ -\Delta_{p} +\epsilon(-\Delta_{p})^s$ where $\epsilon\in (0,1]$.
\end{rem}
\begin{proof}
   In this context, we select $\mu := \delta_1 + \epsilon \delta_s$, where $\delta_1$ and $\delta_s$ represent the Dirac measures centered at $1$ and $s \in (0, 1)$, respectively.   Thus, $\mu$ fulfills all conditions \eqref{measure 1}-\eqref{measure 3} with $\bar{s}:=1$ and $\kappa:=0.$   We now set $s_\sharp:=1$, and the proof is founded on Theorem \ref{thm1.1}.
\end{proof}

\textbf{Nonlocal operators associated with a convergent series of Dirac measures} \label{subsec2.4}
We now show that, in our context, we can also choose $\mu$ as a convergent series of Dirac measures.
\begin{cor}  Let $ \Omega$ be a bounded subset of $\mathbb{R}^N$ and $1 \geq s_0>s_1>s_2> \ldots \geq 0.$ Assume that the operator 
    $$\sum_{k=0}^{+\infty} a_k (-\Delta_p)^{s_k}\quad \text{with} \quad \sum_{k=0}^{+\infty} a_k \in (0, +\infty),$$
    where $a_0>0$ and $a_k \geq 0$ for all $k \geq 1$ with $p_{s_0}^*:= \frac{pN}{N-ps_{0}}$ be the fractional critical Sobolev exponent and $1<q <p<\frac{N}{s_0}.$  Then, there exist $\lambda_*$ and $\kappa_*$ such that for all $\lambda \in (0, \lambda_*)$ and for all $\kappa \in [0, \kappa_*]$, the problem \begin{align}\label{problem52}
\sum_{k=0}^{+\infty} a_k (-\Delta_p)^{s_k}  u &= \lambda |u|^{q-2}u+|u|^{p^*_{s_0}-2}u ~\text{in}~\Omega,  \nonumber\\
 u & =0 ~\text{in}~ \mathbb{R}^N \setminus \Omega,
 \end{align} possesses infinitely many nontrivial solutions in $\mathbb{X}(\Omega)$ with negative energy.
\end{cor}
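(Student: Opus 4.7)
The plan is to recast the series operator $\sum_{k=0}^{+\infty} a_k (-\Delta_p)^{s_k}$ as a superposition operator $A_{\mu,p}$ by making an appropriate choice of the signed Borel measure $\mu$ on $[0,1]$, and then to deduce the corollary as a direct instance of Theorem \ref{thm1.1}. The natural candidate is
\[
\mu := \sum_{k=0}^{+\infty} a_k \, \delta_{s_k},
\]
where $\delta_{s_k}$ denotes the Dirac mass at $s_k \in [0,1]$. Since $a_k \geq 0$ for all $k$ and $\sum_k a_k \in (0,+\infty)$, this is a nonnegative finite Borel measure, so the decomposition \eqref{mudef} is realized trivially by setting $\mu^+ := \mu$ and $\mu^- := 0$.

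Next, I would verify the structural hypotheses \eqref{measure 1}--\eqref{measure 3} with the choice $\bar s := s_0$ and $\kappa := 0$. Condition \eqref{measure 1} follows from
\[
\mu^+([s_0,1]) \geq a_0 > 0,
\]
while \eqref{measure 2} and \eqref{measure 3} hold trivially since $\mu^- \equiv 0$. Condition \eqref{measure 4} is then fulfilled by setting $s_\sharp := s_0$, which produces the critical exponent $p^*_{s_\sharp} = p^*_{s_0} = \frac{pN}{N-ps_0}$ as stated, and the remaining structural hypotheses $1 < q < p < N/s_0$ match those of Theorem \ref{thm1.1} verbatim.

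With this choice of $\mu$, the abstract operator in \eqref{superposition operator} unfolds as
\[
A_{\mu,p} u \;=\; \int_{[0,1]} (-\Delta)_p^{s} u \, \d \mu(s) \;=\; \sum_{k=0}^{+\infty} a_k (-\Delta_p)^{s_k} u,
\]
so problem \eqref{problem52} is precisely problem \eqref{problem 2} for this $\mu$. The associated space $\mathbb{X}(\Omega)$ is then $X_p(\Omega)$ endowed with the norm \eqref{norm on Xp}, which in this case reads
\[
\|u\|_{X_p(\Omega)}^p \;=\; \sum_{k=0}^{+\infty} a_k [u]_{s_k,p}^p,
\]
and is finite for admissible $u$ thanks to the uniform Sobolev estimate of Lemma \ref{Sobolev emb} combined with $\sum_k a_k < +\infty$. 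Since the hypotheses of Theorem \ref{thm1.1} are met (with $\kappa = 0 \in [0,\kappa_*]$), applying it directly yields the existence of infinitely many nontrivial weak solutions with negative energy for all $\lambda \in (0,\lambda_*)$. The only delicate point — and the closest thing to an obstacle — is the verification that the series representation indeed produces a finite Borel measure in the sense required by the framework of Section \ref{pre}, which is precisely why the convergence assumption $\sum_k a_k \in (0,+\infty)$ is imposed.
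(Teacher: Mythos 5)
Your proposal is correct and follows exactly the same route as the paper's proof: define $\mu := \sum_{k=0}^{+\infty} a_k\,\delta_{s_k}$ (so $\mu^+ = \mu$, $\mu^- = 0$), pick $\bar s := s_0$, $\kappa := 0$, $s_\sharp := s_0$, verify \eqref{measure 1}--\eqref{measure 4}, and invoke Theorem \ref{thm1.1}. The additional details you provide (the explicit unfolding of $A_{\mu,p}$ and of the norm, and the remark about finiteness of the measure) are a sound fleshing-out of what the paper states more tersely, but do not constitute a different argument.
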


\begin{proof}
 For applying Theorem \ref{thm1.1}, we define 
 $$\mu:= \sum_{k=0}^{+\infty} a_k \delta_{s_k},$$
 where $\delta_{s_k}$ represents the Dirac measures at $s_k.$   Subsequently, we observe that $\mu$ fulfills all the conditions \eqref{measure 1}-\eqref{measure 3} by choosing 
 $\bar{s}:=s_0$, $\kappa:=0$ and $s_\sharp := s_0$.
 \end{proof}

\begin{cor}  Let $ \Omega$ be a bounded subset of $\mathbb{R}^N$ and $1 \geq s_0>s_1>s_2> \ldots \geq 0.$ Assume that the operator 
    $$\sum_{k=0}^{+\infty} a_k (-\Delta_p)^{s_k}\quad \text{with} \quad \sum_{k=0}^{+\infty} a_k \in (0, +\infty).$$
    There exist $\kappa \geq 0$ and $\bar{k} \in \mathbb{N}\cup \{0\}$ such that 
    \begin{equation} \label{condis_k}
        a_k>0 \quad \forall \,\, k \in \{0, 1, \ldots, \bar{k}\}, \quad \text{and}\,\,\, \sum_{k=\bar{k}+1}^{+\infty} a_k \leq \kappa \sum_{k=0}^{\bar{k}} a_k.
    \end{equation}
    Consider that $p_{s_0}^*:= \frac{pN}{N-ps_{0}}$ be the fractional critical Sobolev exponent and  $1<q <p<\frac{N}{s_0}.$  Then, there exist $\lambda_*$ and $\kappa_*$ such that for all $\lambda \in (0, \lambda_*)$ and for all $\kappa \in [0, \kappa_*]$, the problem \begin{align}\label{problem53}
\sum_{k=0}^{+\infty} a_k (-\Delta_p)^{s_k}  u &= \lambda |u|^{q-2}u+|u|^{p^*_{s_0}-2}u ~\text{in}~\Omega,  \nonumber\\
 u & =0 ~\text{in}~ \mathbb{R}^N \setminus \Omega,
 \end{align} possesses infinitely many nontrivial solutions in $\mathbb{X}(\Omega)$ with negative energy.
\end{cor}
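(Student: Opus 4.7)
The plan is to realize the operator $\displaystyle\sum_{k=0}^{+\infty} a_k (-\Delta_p)^{s_k}$ as the nonlinear superposition operator $A_{\mu,p}$ associated with a suitable signed discrete measure on $[0,1]$, and then invoke Theorem \ref{thm1.1} directly. Concretely, I would set
\begin{equation*}
    \mu := \sum_{k=0}^{+\infty} a_k\, \delta_{s_k},
\end{equation*}
which defines a finite signed Borel measure on $[0,1]$ since the absolute convergence of $\sum a_k$ is controlled by the tail hypothesis \eqref{condis_k}. Its Jordan decomposition $\mu = \mu^+ - \mu^-$ groups the positive and negative Dirac masses separately:
\begin{equation*}
    \mu^+ = \sum_{k=0}^{\bar{k}} a_k\, \delta_{s_k} + \sum_{\substack{k > \bar{k} \\ a_k > 0}} a_k\, \delta_{s_k}, \qquad \mu^- = \sum_{\substack{k > \bar{k} \\ a_k < 0}} (-a_k)\, \delta_{s_k}.
\end{equation*}

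Next, I would set the threshold $\bar{s} := s_{\bar{k}}$ and the dominant exponent $s_\sharp := s_0$, then verify the four structural conditions \eqref{measure 1}--\eqref{measure 4}. Condition \eqref{measure 1} is clear from $\mu^+([\bar{s},1]) \geq \sum_{k=0}^{\bar{k}} a_k > 0$, since each $a_k$ with $k \leq \bar{k}$ is strictly positive. Condition \eqref{measure 2} follows because every $s_k$ with $k > \bar{k}$ lies in $[0, s_{\bar{k}}) = [0, \bar{s})$, so $\mu^-$ carries no mass on $[\bar{s},1]$. Condition \eqref{measure 4} is immediate since $\mu^+(\{s_0\}) = a_0 > 0$, whence $\mu^+([s_\sharp,1]) > 0$. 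The remaining condition \eqref{measure 3} is the one that requires the quantitative tail bound in \eqref{condis_k}: it gives
\begin{equation*}
    \mu^-([0,\bar{s}]) = \sum_{\substack{k > \bar{k} \\ a_k < 0}} (-a_k) \leq \sum_{k > \bar{k}} |a_k| \leq \kappa \sum_{k=0}^{\bar{k}} a_k = \kappa\, \mu^+([\bar{s},1]),
\end{equation*}
where the tail hypothesis is read as controlling the total variation of the tail (and in particular is trivially satisfied when the $a_k$ for $k > \bar{k}$ are all nonnegative, in which case $\mu^- = 0$).

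Once these conditions are verified, the hypotheses of Theorem \ref{thm1.1} are in place: the range $1 < q < p < N/s_0 = N/s_\sharp$ matches, and the critical exponent for the problem is $p_{s_0}^* = p_{s_\sharp}^*$. Applying Theorem \ref{thm1.1} then yields constants $\lambda_* > 0$ and $\kappa_* \geq 0$ such that, whenever $\lambda \in (0,\lambda_*)$ and the $\kappa$ attached to the decomposition of $\mu$ above satisfies $\kappa \in [0,\kappa_*]$, problem \eqref{problem53} admits infinitely many nontrivial weak solutions in $\mathbb{X}(\Omega) = X_p(\Omega)$ with negative energy. The whole proof is therefore a verification that the constructed $\mu$ fits the abstract framework; the only delicate step is the bound on $\mu^-$, which is precisely where the hypothesis \eqref{condis_k} is used, and no further analytical ingredient beyond Theorem \ref{thm1.1} is required.
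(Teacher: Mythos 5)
Your proof takes exactly the same route as the paper's: define the discrete measure $\mu = \sum_{k=0}^{\infty} a_k\,\delta_{s_k}$, choose $\bar{s} = s_{\bar{k}}$ and $s_\sharp = s_0$, verify conditions \eqref{measure 1}--\eqref{measure 4}, and invoke Theorem~\ref{thm1.1}; the only difference is that you spell out the Jordan decomposition and the verification of each structural hypothesis, while the paper states these checks ``can easily'' be done. Your remark that the tail bound in \eqref{condis_k} must be read as controlling $\sum_{k > \bar{k}} |a_k|$ (rather than the signed sum $\sum_{k > \bar{k}} a_k$) for \eqref{measure 3} to hold when negative coefficients occur in the tail is a legitimate and useful observation about the statement as written, and it matches the intended interpretation.
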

\begin{proof} 
We define the measure $\mu$ as follows: 
  $$\mu := \sum_{k=0}^{+\infty} a_k \delta_{s_k},$$ 
  where $\delta_{s_k}$ represents the Dirac measures concentrated at $s_k.$

 By taking $\bar{s}=s_{\bar{k}}$ and $s_\sharp:=s_0$, and applying condition \eqref{condis_k} for $\kappa$, we can easily verify that $\mu$ fulfils all the conditions \eqref{measure 1}-\eqref{measure 3}.  Consequently, we obtained the result as a consequence of Theorem \ref{thm1.1}.
\end{proof}

\textbf{Mixed local and nonlocal operator with wrong sign} \label{subsec2.5}
An attractive scenario arises when the measure $\mu$ changes sign.  This means, for example, that the operator could use a minor term with a ``wrong" sign. To our knowledge, no existing literature addresses a result of this nature, even for the situation $p=2.$

\begin{cor}
Let $ \Omega$ be a bounded subset of $\mathbb{R}^N$ and let $s \in [0, 1)$. Suppose that $N>p >1$, $q \in(1, p)$ and critical Sobolev exponent is $p^*=\frac{p N}{N-p}$. Assume that the operator  $ -\Delta_{p} u-\alpha (-\Delta_{p})^s u$, where $\alpha$ is sufficiently small. Then, there exist $\lambda_*$ and $\kappa_*$ such that for all $\lambda \in (0, \lambda_*)$ and for all $\kappa \in [0, \kappa_*]$, the problem \begin{align}\label{problem54}
 -\Delta_{p}u -\alpha (-\Delta_{p})^s u  &= \lambda |u|^{q-2}u+|u|^{p^*-2}u ~\text{in}~\Omega,  \nonumber\\
 u & =0 ~\text{in}~ \mathbb{R}^N \setminus \Omega,
 \end{align} possesses infinitely many nontrivial solutions in $\mathbb{X}(\Omega)$ with negative energy.
\end{cor}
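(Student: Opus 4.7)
The plan is to realize the operator $-\Delta_p - \alpha (-\Delta_p)^s$ as a superposition operator $A_{\mu,p}$ for an appropriately chosen signed measure $\mu$, verify the structural hypotheses \eqref{measure 1}--\eqref{measure 4} (with $\alpha$ playing the role bounded by $\kappa$), and then invoke Theorem \ref{thm1.1} directly.

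Concretely, I would set
\[
\mu := \delta_1 - \alpha\, \delta_s, \qquad \text{so that} \qquad \mu^+ = \delta_1, \quad \mu^- = \alpha\, \delta_s,
\]
where $\delta_1$ and $\delta_s$ denote the Dirac measures centred at $1$ and $s$, respectively. Then
\[
A_{\mu,p}u = \int_{[0,1]} (-\Delta)_p^\sigma u \, \d \mu(\sigma) = (-\Delta)_p^1 u - \alpha (-\Delta)_p^s u = -\Delta_p u - \alpha (-\Delta_p)^s u,
\]
so that \eqref{problem54} is precisely an instance of \eqref{problem 2}.

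Next, since $s \in [0,1)$, I would pick any $\bar{s} \in (s, 1]$, for instance $\bar{s} := 1$. Then $\mu^+([\bar{s}, 1]) = 1 > 0$, verifying \eqref{measure 1}; moreover $\mu^-$ is supported at $s < \bar{s}$, so $\mu^-|_{[\bar{s},1]} = 0$, which is \eqref{measure 2}. The control condition \eqref{measure 3} reads
\[
\mu^-([0, \bar{s}]) = \alpha \leq \kappa\, \mu^+([\bar{s}, 1]) = \kappa,
\]
so it suffices to require $\alpha \leq \kappa$. With this choice we may take $s_\sharp := 1$ in \eqref{measure 4}, which gives $p^*_{s_\sharp} = pN/(N-p) = p^*$, matching the critical exponent appearing in \eqref{problem54}. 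The hypotheses $1 < p < N/s_\sharp = N$ and $1 < q < p$ are already part of the assumptions.

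Having verified all the structural hypotheses, Theorem \ref{thm1.1} provides constants $\lambda_* > 0$ and $\kappa_* \geq 0$ such that for every $\lambda \in (0, \lambda_*)$ and every $\kappa \in [0, \kappa_*]$, problem \eqref{problem 2} admits infinitely many nontrivial weak solutions in $X_p(\Omega)$ with negative energy. Interpreting the smallness of $\alpha$ as $\alpha \leq \kappa_*$ (which is precisely the meaning of ``$\alpha$ is sufficiently small'' in the statement), the claim follows. There is no substantive obstacle here beyond the bookkeeping of identifying $\mu$ and verifying \eqref{measure 1}--\eqref{measure 4}; the entire analytic content is absorbed into Theorem \ref{thm1.1}, where the smallness of the negative part of $\mu$ is crucial to control the ``wrong sign'' term through the equivalent norm $\eta_p$ of Lemma \ref{lmn2.8}.
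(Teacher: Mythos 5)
Your proposal is correct and follows essentially the same route as the paper: choose $\mu = \delta_1 - \alpha\,\delta_s$ with $\mu^+ = \delta_1$, $\mu^- = \alpha\,\delta_s$, take $\bar{s} = s_\sharp = 1$, verify \eqref{measure 1}--\eqref{measure 3} with $\kappa$ controlling $\alpha$, and apply Theorem \ref{thm1.1}. The only cosmetic difference is that the paper writes $\kappa := \max\{0,\alpha\}$ to cover the (vacuous) possibility $\alpha < 0$, whereas you implicitly take $\alpha \geq 0$; the substance is the same.
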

\begin{proof}
   For the proof, we define $\mu:=\delta_1-\alpha \delta_s,$ where $\delta_1$ and $\delta_s$ are Dirac measures centred at $1$ and $s \in [0, 1),$ respectively.  We now choose $\bar{s}:=1$ and $s_\sharp:=1$, making sure both \eqref{measure 1} and \eqref{measure 2} are satisfied.  Furthermore, see that $$\mu^-([0, \bar{s}]) \leq \max \{0, \alpha\} = \max\{0, \alpha\} \mu^+([\bar{s}, 1]),$$ suggesting that condition \eqref{measure 3} is satisfied by choosing $\kappa:= \max\{0, \alpha\}. $  Consequently, the proof is a direct application of Theorem \ref{thm1.1}. 
\end{proof}

\begin{cor}
    Let $ \Omega$ be a bounded subset of $\mathbb{R}^N$ and let  $N>p >1$.  Let $\alpha \geq 0$ and $1>s_1>s_2>0.$ Assume that the operator $-\Delta_{p} u+ (-\Delta_{p})^{s_1} u-\alpha (-\Delta_{p})^{s_2} u$ and $1< q<p<p^*=\frac{p N}{N-p}.$ Then, there exist $\lambda_*$ and $\kappa_*$ such that for all $\lambda \in (0, \lambda_*)$ and for all $\kappa \in [0, \kappa_*]$, the problem \begin{align}\label{problem55}
 -\Delta_{p} u+ (-\Delta_{p})^{s_1} u-\alpha (-\Delta_{p})^{s_2} u  &= \lambda |u|^{q-2}u+|u|^{p^*-2}u ~\text{in}~\Omega,  \nonumber\\
 u & =0 ~\text{in}~ \mathbb{R}^N \setminus \Omega,
 \end{align} possesses infinitely many nontrivial solutions in $\mathbb{X}(\Omega)$ with negative energy.

\end{cor}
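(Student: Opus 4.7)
The statement is a direct application of Theorem \ref{thm1.1} once the operator is rewritten as a superposition operator $A_{\mu,p}$ for a suitable signed measure $\mu$. The plan is to mimic the construction used in the preceding corollary, but now taking into account that there are two operators with ``correct" sign ($-\Delta_p$ and $(-\Delta_p)^{s_1}$) and one with the ``wrong" sign ($-\alpha(-\Delta_p)^{s_2}$).

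First, I would define
\begin{equation*}
    \mu := \delta_1 + \delta_{s_1} - \alpha \,\delta_{s_2},
\end{equation*}
where $\delta_1, \delta_{s_1}, \delta_{s_2}$ denote the Dirac measures centred at $1$, $s_1$ and $s_2$, respectively. With this choice one immediately reads off
\begin{equation*}
    \mu^+ = \delta_1 + \delta_{s_1}, \qquad \mu^- = \alpha \,\delta_{s_2}.
\end{equation*}
Since the limiting identity $\lim_{s \nearrow 1}(-\Delta)_p^s u = -\Delta_p u$ is built into the definition of $A_{\mu,p}$, the resulting superposition operator coincides (up to the sign convention) with $-\Delta_p u + (-\Delta_p)^{s_1}u - \alpha (-\Delta_p)^{s_2}u$.

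Next, I would pick $\bar s := s_1$ and verify the three measure conditions. The choice gives $\mu^+([\bar s,1]) = \mu^+(\{s_1,1\}) = 2>0$, so \eqref{measure 1} holds. Because $s_2 < s_1 = \bar s$, the negative part $\mu^- = \alpha\delta_{s_2}$ is supported in $[0,\bar s)$, hence $\mu^-|_{[\bar s,1]} = 0$, giving \eqref{measure 2}. For \eqref{measure 3} we compute
\begin{equation*}
    \mu^-([0,\bar s]) = \alpha \leq \tfrac{\alpha}{2}\cdot 2 = \kappa\, \mu^+([\bar s,1]),
\end{equation*}
so \eqref{measure 3} holds with $\kappa := \alpha/2$ (equivalently $\kappa:=\max\{0,\alpha/2\}$, matching the convention used in the preceding corollary). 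Finally, for \eqref{measure 4} I set $s_\sharp := 1$, which gives $\mu^+([s_\sharp,1]) = \mu^+(\{1\}) = 1 > 0$. Since $s_\sharp = 1$, the associated critical exponent is $p^*_{s_\sharp} = pN/(N-p) = p^*$, matching the exponent in \eqref{problem55}, and the condition $1<p<N/s_\sharp$ reduces to $1<p<N$.

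There is no real obstacle here; the only point to keep track of is that $\kappa$ in Theorem \ref{thm1.1} must lie in $[0,\kappa_*]$, which, via the identity $\kappa = \alpha/2$, translates precisely into the smallness assumption ``$\alpha$ sufficiently small'' stated in the corollary. With $1<q<p$ already assumed, Theorem \ref{thm1.1} then produces infinitely many nontrivial weak solutions with negative energy in $X_p(\Omega) = \mathbb{X}(\Omega)$ to problem \eqref{problem55}, completing the proof.
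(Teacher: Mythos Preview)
Your proposal is correct and follows essentially the same approach as the paper: define $\mu := \delta_1 + \delta_{s_1} - \alpha\,\delta_{s_2}$, take $\bar s := s_1$ and $s_\sharp := 1$, verify \eqref{measure 1}--\eqref{measure 4} with $\kappa = \alpha/2$, and invoke Theorem~\ref{thm1.1}. Your write-up is in fact slightly more detailed than the paper's, making explicit the computation $\mu^+([\bar s,1])=2$ and the identification $p^*_{s_\sharp}=p^*$.
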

\begin{proof} Define $\mu:= \delta_1+\delta_{s_1}-\alpha \delta_{s_2}$ for $1>s_1>s_2>0.$ Subsequently, both the conditions \eqref{measure 1} and \eqref{measure 2} are satisfied 
 for $\bar{s}:=s_1$ and $s_\sharp:=1.$ Now, we will verify conditions \eqref{measure 3} hold true for sufficiently small $\alpha.$ Indeed, we observe that 
$$\mu^-([0, s_1)) = \alpha = \frac{\alpha}{2} \mu^+([s_1, 1]),$$
which implies that condition \eqref{measure 3} is hold for any $\kappa \geq \frac{\alpha}{2}.$  Consequently, the proof of the results is derived from Theorem \ref{thm1.1}.
\end{proof}

\textbf{Nonlocal operator driven by continuous superposition of the fractional $p$-Laplacian} \label{subsec2.6}
We note another interesting result arising from the continuous superposition of fractional operators of the $p$-Laplacian type.  To the best of our knowledge, this result is also new for $p=2$.

\begin{cor}
 Let $ \Omega$ be a bounded subset of $\mathbb{R}^N.$    Let $s_\sharp \in (0, 1), \kappa \geq 0,$  and let $f \not\equiv 0 $ be a measurable function such that 
    \begin{align} \label{condf}
        &f \geq 0 \quad \text{in} \quad (s_\sharp, 1), \nonumber \\
        &\int_{s_\sharp}^1 f(s)\, ds >0, \\
        \text{and}\quad &\int_0^{s_\sharp} \max\{0, -f(s)\}\, ds \leq \kappa \int_{s_\sharp}^1 f(s)\, ds. \nonumber
        \end{align}
   Consider the operator defined as $\bigintss_0^1 f(s) (-\Delta_p)^s u\, ds$, where $\frac{N}{s_{\sharp}}>p > 1$, $q \in(1, p)$, and $p_{s_\sharp}^*=\frac{p N}{N-{s_\sharp}p}$ represents the fractional critical Sobolev exponent. Then, there exist $\lambda_*$ and $\kappa_*$ such that for all $\lambda \in (0, \lambda_*)$ and for all $\kappa \in [0, \kappa_*]$, the problem 
   \begin{align}\label{problem56}
 \int_0^1 f(s) (-\Delta_p)^s u\, ds  &= \lambda |u|^{q-2}u+|u|^{p^*_{s_\sharp}-2}u ~\text{in}~\Omega,  \nonumber\\
 u & =0 ~\text{in}~ \mathbb{R}^N \setminus \Omega,
 \end{align} possesses infinitely many nontrivial solutions in $\mathbb{X}(\Omega)$ with negative energy.
    
\end{cor}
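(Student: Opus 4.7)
The plan is to recast the continuous superposition operator as an instance of the abstract framework by encoding $f$ as the Radon--Nikodym derivative of a signed Borel measure on $[0,1]$, and then invoke Theorem \ref{thm1.1} directly. Concretely, I would define the signed measure $\mu$ on $[0,1]$ by $\d \mu(s) := f(s)\,\d s$, with Jordan decomposition $\mu = \mu^+ - \mu^-$ given by $\d \mu^+(s) := \max\{f(s),0\}\,\d s$ and $\d \mu^-(s) := \max\{-f(s),0\}\,\d s$. Under this identification, the superposition operator \eqref{superposition operator} becomes $A_{\mu,p} u = \int_{[0,1]} (-\Delta)_p^s u \, f(s)\,\d s$, which is exactly the operator appearing in \eqref{problem56}.

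Next, I would verify that the measures $\mu^\pm$ satisfy the structural conditions \eqref{measure 1}--\eqref{measure 3} with the choice $\bar{s} := s_\sharp$. Since $f \geq 0$ on $(s_\sharp, 1)$ (first line of \eqref{condf}), we have $\mu^-|_{[\bar{s},1]} = 0$, giving \eqref{measure 2}. The strict positivity $\int_{s_\sharp}^1 f(s)\,\d s > 0$ in \eqref{condf} implies $\mu^+([\bar{s},1]) = \int_{s_\sharp}^1 f(s)\,\d s > 0$, yielding \eqref{measure 1}. For \eqref{measure 3}, the third relation in \eqref{condf} gives exactly
\begin{equation*}
\mu^-([0,\bar{s}]) = \int_0^{s_\sharp} \max\{0, -f(s)\}\,\d s \leq \kappa \int_{s_\sharp}^1 f(s)\,\d s = \kappa\, \mu^+([\bar{s},1]).
\end{equation*}
Moreover, since $\mu^+([s_\sharp,1]) > 0$, the exponent $s_\sharp$ specified in the hypothesis plays the role of \eqref{measure 4}, so $p_{s_\sharp}^* = pN/(N - s_\sharp p)$ coincides with the critical exponent \eqref{critical exponent} associated to $\mu$.

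With these verifications in hand, the problem \eqref{problem56} is precisely the problem \eqref{problem 2} for this particular choice of $\mu$ (in the regime $1 < q < p < N/s_\sharp$). Theorem \ref{thm1.1} then supplies constants $\lambda_* > 0$ and $\kappa_* \geq 0$ such that for all $\lambda \in (0,\lambda_*)$ and all $\kappa \in [0,\kappa_*]$, the problem admits infinitely many nontrivial weak solutions in $X_p(\Omega) = \mathbb{X}(\Omega)$ with negative energy.

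There is essentially no analytical obstacle here beyond careful bookkeeping: the entire content of the corollary lies in translating the pointwise assumptions \eqref{condf} on $f$ into the measure-theoretic hypotheses \eqref{measure 1}--\eqref{measure 3}, after which Theorem \ref{thm1.1} applies verbatim. The only mildly delicate point is the sign condition near $s = s_\sharp$: one should observe that since $\mu^\pm$ are absolutely continuous with respect to Lebesgue measure on $[0,1]$, their values at the single point $\{s_\sharp\}$ vanish, so the splitting between $[0,\bar{s}]$ and $[\bar{s},1]$ is unambiguous and the reabsorption estimate of Lemma \ref{reabsorb} is available without modification.
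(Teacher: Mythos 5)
Your proposal is correct and follows essentially the same route as the paper: define $\d\mu(s) = f(s)\,\d s$, take $\bar{s} = s_\sharp$, and check \eqref{measure 1}--\eqref{measure 3} directly from \eqref{condf} before invoking Theorem \ref{thm1.1}. Your write-up is actually more explicit than the paper's (the paper compresses the verification into one sentence), and your closing observation about absolute continuity making the single point $\{s_\sharp\}$ $\mu^\pm$-null is a correct and worthwhile clarification that the paper leaves implicit.
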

\begin{proof}
    In this instance, we define $d\mu(s):=f(s) ds$, where $f$ is as defined in the definition of the result.  Consequently, the operator $A_{p, \mu}$ is expressed as $$ \int_0^1 f(s) (-\Delta_p)^s u\, ds.$$
     Due to the conditions stated in \eqref{condf}, all the conditions \eqref{measure 1}-\eqref{measure 3} are fulfilled by choosing $\bar{s}:=s_\sharp,$ which also works as the crucial fractional Sobolev exponent.   Consequently, the proof of this result is derived from Theorem \ref{thm1.1}.
\end{proof}


\section*{Conflict of interest statement}
On behalf of all authors, the corresponding author states that there is no conflict of interest.

\section*{Data availability statement}
Data sharing is not applicable to this article as no datasets were generated or analysed during the current study.

\section*{Acknowledgement}
The authors thank the reviewers for their careful reading and constructive suggestions, which improved the quality of the paper. SB would like to thank the Council of Scientific and Industrial Research (CSIR), India for financial assistance to carry out this research work [grant no. 09/0874(17164)/ 2023-EMR-I]. SG gratefully acknowledges the financial support for this research work under ARG-MATRICS, grant No: ANRF/ARGM/2025/ 001570/MTR, Anusandhan National Research Foundation (ANRF), Government of India. SG would also like to thank the Ghent Analysis \& PDE Centre, Ghent University, Belgium, for the support during his research visit.  VK is supported by the FWO Odysseus 1 grant G.0H94.18N: Analysis and Partial Differential Equations and the Methusalem program of the Ghent University Special Research Fund (BOF) (Grant number 01M01021). VK is also supported by FWO Senior Research Grant G011522N.


\begin{thebibliography}{9}


\bibitem{ABB2025}
D.~G.~Afonso, R.~Bartolo and G.~Molica~Bisci.
\newblock{Multiple solutions to asymptotically linear problems driven by superposition operators.}
\newblock{\em J. Math. Anal. Appl.}, 553(1): 1--14, Paper No. 129846, 2026. 



\bibitem{AGKR2025}
Y.~Aikyn, S.~Ghosh, V.~Kumar, and M.~Ruzhansky.
\newblock Brezis-Nirenberg type problems associated with nonlinear superposition operators of mixed fractional order.
\newblock {\em arXiv preprint arXiv:2504.05105}, 2025.

\bibitem{AlvesDing2010} C. O. Alves and Y. H. Ding. Multiplicity of positive solutions to a $p$-Laplacian equation involving critical nonlinearity. \newblock  {\em J. Math. Anal. Appl.} 279:508–521, 2003.

\bibitem{A1972}
H.~ Amann.
\newblock Lusternik-Schnirelman theory and nonlinear eigenvalue problems.
\newblock {\em Math. Ann.}, 199:55-72, 1972.

\bibitem{ABC1994}
A.~Ambrosetti, H.~ Brezis, Ha{\"\i}m and G.~Cerami.
\newblock Combined effects of concave and convex nonlinearities in some elliptic problems.
\newblock {\em J. Funct. Anal.}, 122(2):519-543, 1994.

\bibitem{AR: 1973} A. Ambrosetti and P.H. Rabinowitz. Dual variational methods in critical point theory and applications. {\it J. Funct. Anal.}, 14(4):349--381, 1973. 




\bibitem{BW1994}
T.~Bartsch and M.~Willem.
\newblock On an elliptic equation with concave and convex nonlinearities.
\newblock {Proc. Amer. Math. Soc.\em}, 123(11):3555--3561, 1995.

\bibitem{BG2025}
S.~Bhowmick and S.~Ghosh.
\newblock On sign-changing solutions for mixed local and nonlocal $ p $-Laplace operator.
\newblock {\em arXiv preprint arXiv:2503.04361}, 2025.



\bibitem{BDVV22} S. Biagi, S. Dipierro, E. Valdinoci, and E. Vecchi. A Brezis-Nirenberg type result for mixed local and nonlocal operators. {\it NoDEA Nonlinear Differential Equations Appl.}, 32:1--28, Article No. 62, 2025.

\bibitem{BL: 1983} H. Brezis and E. Lieb. A relation between pointwise convergence of functions and convergence of functionals. {\it Proc. Amer. Math. Soc.}, 88(3):486--490, 1983.




\bibitem{BN83}
H.~Brezis and L.~Nirenberg.
\newblock Positive solutions of nonlinear elliptic equations involving critical {S}obolev exponents.
\newblock {\em Comm. Pure Appl. Math.}, 36(4):437--477, 1983.

\bibitem{CaoPengYan2013} D. Cao, S. Peng and S. Yan. Infinitely many solutions for $p$-Laplacian equation involving critical Sobolev growth. \newblock {\em J. Funct. Anal.} 262:2861–2902, 2012.

\bibitem{C1972} D.~C.~ Clark.
\newblock A variant of the Lusternik-Schnirelman theory.
\newblock {\em Indiana Univ. Math. J.}, 22:65-74, 1972.


\bibitem{DFV24} J. V. Da Silva, A. Fiscella, and V. A. B. Viloria. Mixed local-nonlocal quasilinear problems with critical nonlinearities, {\em J. Differential Equations}, 408:494--536, 2024.

\bibitem{Degiovanni2007} M. Degiovanni and S. Lancelotti. Linking solutions for $p$-Laplace equations with nonlinearity at critical growth. {\em J. Funct. Anal.} 256:3643–3659, 2009.

\bibitem{DGV21} F. del Teso, D. G\'omez-Castro and J. L. V\'azquez. Three Representations of the Fractional $p$-Laplacian: Semigroup, Extension and Balakrishnan Formulas. {\em Fract. Calc. Appl. Anal.}, 24(4):966--1002, 2021.



\bibitem{DV21} S. Dipierro and E. Valdinoci. Description of an ecological niche for a mixed local/nonlocal dispersal: an evolution
equation and a new Neumann condition arising from the superposition of Brownian and Lévy processes. {\em Phys. A}, 575, Article no. 126052, 2021.

\bibitem{DPSV} S. Dipierro, K. Perera, C. Sportelli and  E. Valdinoci. An existence theory for nonlinear superposition operators of mixed fractional order, {\it Commun. Contemp. Math.}, 27(8):1--39, Article no. 2550005, 2025.


\bibitem{DPSV2} S. Dipierro, K. Perera, C. Sportelli and E. Valdinoci. An existence theory for superposition operators of mixed order subject to jumping nonlinearities. {\it Nonlinearity} 37(5):1--27, Paper No. 055018, 2024.

\bibitem{DPSV1} S. Dipierro, E. Proietti Lippi, C. Sportelli and E. Valdinoci.
A general theory for the $(s,p)$-superposition of nonlinear fractional operators. {\it Nonlinear Anal. Real World Appl.}, 82:1--24,  Paper No. 104251, 2025.

\bibitem{DPSV25} S. Dipierro, E. Proietti Lippi, C. Sportelli and E. Valdinoci. Logistic diffusion equations governed by the superposition of operators of mixed fractional order. {\it  Ann. Mat. Pura Appl.}, 1--51,
\newblock {\em DOI: https://doi.org/10.1007/s10231-025-01613-9}, 2025.



\bibitem{DPSV2025}
S.~Dipierro, E.~Proietti Lippi, C.~ Sportelli and E.~ Valdinoci.
\newblock Maximum principles and spectral analysis for the superposition of operators of fractional order.
\newblock {\em arXiv preprint arXiv:2504.10946}, 2025.



\bibitem{Egnell1988} H. Egnell. Existence and nonexistence results for $p$-Laplace equations involving critical Sobolev exponents, {\em Arch. Ration. Mech. Anal.} 104(1):57–77, 1988.




\bibitem{AP1991}
J.  Garc{\'i}a Azorero and I. Peral Alonso.
\newblock Multiplicity of solutions for elliptic problems with critical exponent or with a nonsymmetric term.
\newblock {\em Trans. Amer. Math. Soc.}, 323(2):877--895, 1991.

\bibitem{AP1987}
J.  Garc{\'i}a Azorero and I. Peral Alonso.
\newblock Existence and nonuniqueness for the $p$-Laplacian: nonlinear eigenvalues.
\newblock {\em Comm. Partial Differential Equations}, 12(12):1389--1430, 1987.




\bibitem{GCF2025}
S. Ghosh, D. Choudhuri and A. Fiscella.
\newblock Existence of at least $k$ solutions to a fractional $p$-Kirchhoff problem involving singularity and critical exponent.
\newblock {\em Fract. Calc. Appl. Anal.}, 28(2):1012-1039, 2025.

\bibitem{HHZ2020}
T. He, L. He and M. Zhang. The Brézis–Nirenberg type problem for the $p$-Laplacian: infinitely many sign-changing solutions. \newblock {\em Calc. Var.} 59(3):1--14, Paper No. 98, 2020.

\bibitem{K2005}
R.~Kajikiya.
\newblock A critical point theorem related to the symmetric mountain pass lemma and its applications to elliptic equations. 
\newblock {\em J. Funct. Anal.}, 225(2):352--370, 2005.

\bibitem{K2006}
R.~Kajikiya.
\newblock Multiple solutions of sublinear Lane-Emden elliptic equations. 
\newblock {\em Calc. Var.}, 26:29--48, 2006.




\bibitem{LST2017}
L. Li, J. Sun and S. Tersian. Infinitely many sign-changing solutions for the Brezis-Nirenberg problem involving the fractional Laplacian. \newblock {\em Fract. Calc. Appl. Anal. } 20(5):1146–1164, 2017.


\bibitem{M1989}
J.~Mawhin.
\newblock Critical point theory and Hamiltonian systems.
\newblock {\em Applied Mathematical Sciences, Springer-Verlag, New York}, 74, 1989.



\bibitem{MPSY16} S. Mosconi, K. Perera, M. Squassina and Y. Yang. \newblock The Brezis-Nirenberg problem for the fractional $p$-Laplacian. {\em Calc. Var. Partial Differential Equations}, 55(4):1--25, Article No. 105, 2016.



\bibitem{R1986}
P.~H.~Rabinowitz.
\newblock Minimax methods in critical point theory with applications to differential equations.
\newblock {\em CBMS Regional Conference Series in Mathematics, American Mathematical Society, Providence RI}, 65,   1986.












\bibitem{SM2014}
J. Sun and S. Ma. Infinitely many sign-changing solutions for the Brezis-Nirenberg problem. {\em Commun. Pure Appl. Anal.} 13(6):2317–2330, 2014.

\bibitem{T2011}
J. Tan. The Brezis-Nirenberg type problem involving the square root of the Laplacian. {\it Calc. Var. Partial Differential Equations} 42(1):21–41, 2011.

\bibitem{SZ2010}
M. Schechter and W. Zou. On the Br\'ezis-Nirenberg problem. {\em Arch. Rational Mech. Anal.} 197(1):337–356, 2010.


\bibitem{SV2013}
R. Servadei and E. Valdinoci. A Brezis-Nirenberg result for non-local critical equations in low dimension. {\em Commun. Pure Appl. Anal.} 12(6):2445–2464, 2013.

\bibitem{SV2015}
R. Servadei and E. Valdinoci. The Brezis-Nirenberg result for the fractional Laplacian. {\em Trans. Amer. Math. Soc.} 367(1):67–102, 2015.


\bibitem{YZ2024}
D.~Ye and W.~Zhang.
\newblock Existence and multiplicity of solutions for fractional $p$-Laplacian equation involving critical concave-convex nonlinearities.
\newblock {Adv. Nonlinear Stud.\em}, 24(4):895--921, 2024.


\bibitem{ZBS2015}
Z.~Binlin, G.~ Molica Bisci and R.~Servadei.
\newblock Superlinear nonlocal fractional problems with infinitely many solutions.
\newblock {\em Nonlinearity}, 28(7):2247--2264, 2015.

\end{thebibliography}
\end{document}